\documentclass[11pt]{amsart}
\usepackage{amsfonts}

\usepackage{amsmath,amssymb,amsthm}\allowdisplaybreaks[4]
\usepackage{url}
\usepackage{xcolor} 
\usepackage{graphicx}   
\usepackage{verbatim}       
\usepackage{layout}
\usepackage{mathrsfs}
\usepackage{tikz}

\usepackage[margin=1in]{geometry}
\usepackage{setspace}
\usepackage{enumerate}




\xdefinecolor{dred}{rgb}{0.6,0,0}

\newtheorem{theorem}{Theorem}[section]
\newtheorem{lemma}[theorem]{Lemma}
\newtheorem{proposition}[theorem]{Proposition}
\newtheorem{remark}[theorem]{Remark}

\newtheorem{corollary}[theorem]{Corollary}

\newcommand{\ind}{1{\hskip -2.5 pt}\hbox{I}}

\newfam\msbmfam\font\tenmsbm=msbm10\textfont
\msbmfam=\tenmsbm\font\sevenmsbm=msbm7
\scriptfont\msbmfam=\sevenmsbm

\def\EE{\mathbb{E}}
\def\NN{\bb N}\def\PP{\bb P}
\def\RR{\mathbb{R}}

\def\bN{\mathbb{N}}

\def\bZ{\mathbb{Z}}

\def\cL{\mathcal{L}}

\def\<{\left<}\def\>{\right>}
\def\({\left(}\def\){\right)}

\def\blemma{\begin{lemma}}\def\elemma{\end{lemma}}
 \def\bproposition{\begin{prop}}\def\eproposition{\end{prop}}
 \def\btheorem{\begin{theorem}}\def\etheorem{\end{theorem}}
 \def\bcorollary{\begin{corollary}}\def\ecorollary{\end{corollary}}

\def\beqlb{\begin{eqnarray}}\def\eeqlb{\end{eqnarray}}
 \def\beqnn{\begin{eqnarray*}}\def\eeqnn{\end{eqnarray*}}

\graphicspath{%
    {converted_graphics/}
    {/}
}


\def\EE{\mathbb{E}}
\def\NN{\mathbb N}\def\PP{\mathbb P}
\def\RR{\mathbb{R}}

\def\bN{\mathbb{N}}

\begin{document}
\title{Coalescent results for diploid exchangeable population models}
\date{\today}
\author{Matthias Birkner, Huili Liu and Anja Sturm}
\thanks{We would like to thank two anonymous referees for their careful reading and useful comments
which helped to improve the presentation. \\
This project received financial support
from the German Research Foundation (DFG), within
the Priority Programme 1590 ``Probabilistic Structures in Evolution''
through Grants BI\ 1058/2-1,  BI\ 1058/2-2 and STU\ 527/1-1,  STU\ 527/1-2, Liu also acknowledges research support by National Natural Science Foundation of China NSFC 11501164}
\address{Matthias Birkner: Institut f\"ur Mathematik, Johannes-Gutenberg-Universit\"at Mainz, 55099 Mainz, Germany}
\email{birkner@mathematik.uni-mainz.de}
\address{Huili Liu: College of Mathematics and Information Science, Hebei Normal University, Shijiazhuang, Hebei Province, 050024, China}
\email{l\_huili@live.concordia.ca}
\address{Anja Sturm: Institut f\"ur Mathematische Stochastik, Georg-August-Universit\"at G\"ottingen, 37077 G\"ottingen, Germany}
\email{asturm@math.uni-goettingen.de}

\subjclass[2010]{Primary:
60J17, 92D10; Secondary: 60J70, 92D25}
\date{\today}
\keywords{Diploid population model, diploid ancestral process, 
coalescent with simultaneous multiple collisions}

\maketitle
\pagestyle{myheadings} \markboth{\textsc{Coalescent results for diploid exchangeable population models}}{\textsc{M.~Birkner, H.~Liu and A.~Sturm}}
\begin{abstract}
We consider diploid bi-parental analogues of Cannings models: in a
population of fixed size $N$ the next generation is composed of
$V_{i,j}$ offspring from parents $i$ and $j$, where $V=(V_{i,j})_{1\le i
  \neq j \le N}$ is a (jointly) exchangeable (symmetric) array. Every
individual carries two chromosome copies, each of which is inherited
from one of its parents. We obtain general conditions, formulated in
terms of the vector of the total number of offspring to each
individual, for the convergence of the properly scaled ancestral
process for an $n$-sample of genes towards a ($\Xi$-)coalescent.
This complements M\"ohle and Sagitov's (2001) 
result for the haploid case and sharpens the profile of
M\"ohle and Sagitov's (2003) 
study of the diploid case, which focused on fixed couples, where each
row of $V$ has at most one non-zero entry.

We apply the convergence result to several examples, in particular to two
diploid variations of Schweinsberg's (2003) 
model, leading to
Beta-coalescents with two-fold and with four-fold mergers, respectively.
\end{abstract}

\section{Introduction and main results}
\label{sec:IntroResult}
For haploid population models, in which every individual (gene) has one parent (gene), coalescent processes have been used widely in order to describe the ancestral structure of a sample of $n$ genes when the total population size $N$ is sufficiently large. The purpose of this work is to extend the coalescent theory to general  diploid population models, in which individuals carry two copies of each gene which they inherit from two distinct parental individuals. In this context, we derive the diploid analogue of M\"ohle and Sagitov's classification of the ancestral
processes in exchangeable haploid population models \cite{Mohle2001}. This gives a unified picture of studying genealogies in an exchangeable diploid setting, which has up to now only been available in special cases.

We consider a general diploid exchangeable population model with fixed
constant population size $N\in\mathbb{N}:=\left\{1,2,\ldots\right\}$
and non-overlapping generations
$m\in\mathbb{N}_{0}:=\left\{0,1,2,\ldots\right\}$ without explicit
sexes (see however our remarks in Section \ref{sect:extensions}). The generations are labeled backwards in time. That is, $m=0$ is the current generation; $m=1$ is one generation backwards in time and so on. Each individual possesses two chromosome copies, each inherited
from one of its two parents. Which parental chromosome is
inherited is a uniform random pick, independently for each child.
For $m\in\mathbb{N}$, let $V^{\(m\)}_{i,j}$ be the number of
children by individuals $i$ and $j$ (for $i < j$) in the $m$-th generation.
We call these quantities {\em pairwise offspring numbers} and  implicitly define $V^{(m)}_{k,j} = V^{(m)}_{j,k}$ for $k>j$
when notationally necessary.  We exclude the possibility of
self-fertilisation, i.e., $V^{(m)}_{i,i} = 0$.  We assume that the
reproduction law is independent and identically distributed from
generation to generation, i.e., the matrices
$\big(V^{(m)}_{i,j}\big)_{1 \le i < j \le N}$, $m \in \mathbb{N}$ are
i.i.d. We will often write $V_{i,j}=V_{i,j}^{\(1\)}$ for simplicity.
We have $\sum_{1 \le i < j \le N} V_{i,j} = N$ because the population size is fixed. Note that despite this dependence on $N$ of the law of $(V_{i,j})_{1 \le i < j \le N}$
we will suppress the $N$-dependence in the notation.
Our fundamental assumption is the following exchangeability condition:
\begin{equation}
  \label{eq:exchangeable}
  \(V_{i,j}\)_{1 \le i < j \le N} \mathop{=}^d \(V_{\sigma\(i\),\sigma\(j\)}\)_{1 \le i < j \le N}
  \quad \text{for any permutation $\sigma$ of $\{1,\dots,N\}$},
\end{equation}
i.e., $\(V_{i,j}\)_{1 \le i < j \le N}$ is a finite jointly exchangeable array
(see also Remark~\ref{Remark4} in Section \ref{sec:Discussion}).
\medskip

\begin{center}
  \def\pgone{{2,2,1,4,6,6,7}} 
  \def\pggone{{1,0,0,0,1,0,1}} 
  \def\pgtwo{{1,1,3,3,7,4,6}} 
  \def\pggtwo{{1,1,0,0,1,1,0}} 
  \begin{tikzpicture}
    \foreach \y in {0,1}
    \foreach \x in {1,2,3,4,5,6,7} {
      \filldraw[black] (\x -0.1,\y) circle[radius=0.05]
                       (\x +0.1,\y) circle[radius=0.05];
      \draw (\x,\y) ellipse[x radius=0.35, y radius=0.25];
    };

    \foreach \i in {0,1,2,3,4,5,6} {
      \pgfmathsetmacro{\j}{\pgone[\i]+0.1*(2*\pggone[\i]-1)}
      \draw (\i+1-0.1,0) -- (\j,1) ;
      \pgfmathsetmacro{\j}{\pgtwo[\i]+0.1*(2*\pggtwo[\i]-1)}
      \draw (\i+1+0.1,0) -- (\j,1) ;
    }

    \draw (0.2,0) node[left] {children} ;
    \draw (0.2,1) node[left] {parents} ;
  \end{tikzpicture}
  \medskip

  \begin{minipage}{0.75\textwidth}
    {\bf Figure 1\ } An example for the assignment of parental genes in a population of size $N=7$, each
    individual has two gene copies (the filled circles).
  \end{minipage}
\end{center}
\medskip

Generally, we are interested in tracking the genealogy of a sample of
$n \in \mathbb{V}_N:=\{1,2,\ldots, N\}$ genes from the present population of
size $N$. We will follow the customary approach of describing
ancestral relations among $n$ sampled genes by partitions
characterising which genes are descended from the same parental gene. Unless specified otherwise, asymptotic relations refer to letting $N\rightarrow\infty$ throughout the paper.

Let $\mathcal{E}_n$ be the collection of partitions of
$\mathbb{V}_n$ and $\mathcal{E}_{\infty}$ be the collection of partitions of $\mathbb{N}$. Any element in $\mathcal{E}_n$ can be expressed by
$\xi=\{C_1,C_2,\ldots,C_b\}$ where $C_i\cap C_j=\emptyset$ for $i\neq
j$ and $\cup_{i=1}^bC_i=\mathbb{V}_n$ with $b=\left|\xi\right|$
the number of  partition elements
in $\xi$.  When it is necessary to make the representation unique we
order the $C_i, i=1, \dots, b$ by their smallest element in ascending
order. In the following we will also refer to the partition elements as blocks.
For any $\xi$, $\eta\in\mathcal{E}_n$, write $\xi\subseteq\eta$ if and
only if every block
of $\eta$ is a union of (one or more) blocks
of $\xi$.

In order to also specify which ancestral genes belong to the same
ancestral individuals we use notation introduced by M\"ohle and Sagitov in \cite{MohleSagitov2003}
and consider the state space
 \begin{equation*}
 \mathcal{S}_n= \left\{ \left\{\left\{C_1,C_2\right\},\ldots,\left\{C_{2x-1},C_{2x}\right\},C_{2x+1},\ldots,C_b\right\} :
 b \in \mathbb{V}_n, x \in \mathbb{V}_{\left\lfloor\frac{b}{2}\right\rfloor}, \{C_1, \ldots, C_b\} \in \mathcal{E}_n \right\},
  \end{equation*}
where $\lfloor x\rfloor$ is the largest integer less than or equal to $x$.
We equip the space $ \mathcal{E}_n$ as well as  $\mathcal{S}_n$ with the discrete topology.
  For later use we define a map $\mathsf{cd} :
  \mathcal{S}_n\rightarrow\mathcal{E}_n$ such that for
  any $$\xi=\left\{\left\{C_1,C_2\right\},\ldots,\left\{C_{2x-1},C_{2x}\right\},C_{2x+1},\ldots,C_b\right\}\in\mathcal{S}_n,$$
$$
\mathsf{cd}(\xi):=\left\{C_1,C_2,\ldots,C_{2x-1},C_{2x},C_{2x+1},\ldots,C_b\right\} \in \mathcal{E}_n.$$
Following \cite{BBE13}, we call $\mathsf{cd}(\xi)$ the {\it complete dispersion} of $\xi$.

Now sample $n$ genes randomly from the current generation. We can think of
sampling $n/2$ individuals and looking at both of their genes or
sampling $n$ individuals and inspecting only one randomly chosen gene in each
or something in-between, this will not matter in the limit we are interested
in.
For $m\in\mathbb{N}_0$, let $\xi^{n,N}\(m\)$
be the configuration of the genealogical structure for the sampled
genes when looking $m$ generations backwards in time:
$i$ and $j$ are in the same block of $\xi^{n,N}\(m\)$ if and only if
the $i$-th and the $j$-th sampled genes have the same ancestral gene
$m$ generations ago. We also keep track of the grouping of these
ancestral genes into ancestral (diploid) individuals
(this is necessary so that the dynamics of $\xi^{n,N}$ is Markovian).
For example, in Figure 1 if we sample all genes of the children then the leftmost 
parent carries two ancestral genes while the third parent from the left 
only carries one.
We are interested in the convergence of the (suitably time-scaled)
{\it ancestral process} $\(\xi^{n,N}\(m\)\)_{m\in\mathbb{N}_0}$, which is a
Markov chain with state space $\mathcal{S}_n.$

For the description of the possible limit processes as $N \rightarrow \infty$ the {\em total offspring numbers}
\begin{align}
  \label{def:Vi}
  V_i := \sum_{1 \le j \le N} V_{i,j},
\end{align}
giving the total number of offspring of individual $i$ for $1\le i \le N$ will play a crucial role.
Note that these $V_i$ children may be full or half siblings.
We have $\sum_{i=1}^N V_i = 2N$ and the vector $(V_i)_{1 \le i \le N}$ inherits exchangeability
from the array $(V_{i,j})$. Indeed, for any permutation $\sigma$ on $\mathbb{V}_N$,
we have
\begin{equation}
\label{eq:Vexch}
\begin{split}
\(V_{\sigma\(1\)},\ldots,V_{\sigma\(N\)}\) =\left(\sum_{j=1}^NV_{\sigma\(i\),j}\right)_{i\in \mathbb{V}_N}
& =\left(\sum_{j=1}^NV_{\sigma\(i\),\sigma{\(j\)}}\right)_{i\in \mathbb{V}_N} \\
& {\buildrel d \over =}\left(\sum_{j=1}^NV_{i,j}\right)_{i\in \mathbb{V}_N}
=\(V_{1},\ldots,V_{N}\).
\end{split}
\end{equation}
Thus, $(V_i)_{1 \le i \le N}$ can essentially be viewed as an offspring distribution
for a Cannings model with population size $2N$ (in which always only $N$ individuals are parents to offspring in the following generation).
\smallskip

In order to consider a suitable scaling for the large population limit
a key quantity is the probability that two genes (picked at
random) from two distinct individuals, which are chosen randomly
without replacement from the same generation, have a common ancestor
(gene) in the previous generation.
In our model this quantity is given by
\begin{equation}
  \label{eq:cN1}
  c_N = \frac18 \EE\big[V_{1,2}^2-\tfrac{2}{N-1}\big] +
  \frac{N-2}{8}\EE\big[V_{1,2}V_{1,3}\big]= \frac{1}{8}\frac{1}{N-1} \EE[ (V_1)_2]
\end{equation}
where $(v)_k:=v(v-1)\cdots(v-k+1)$ denotes the $k$-th falling factorial (see Lemma~\ref{lem:cN} below, where also alternative expressions for $c_N$ are given).
If $c_N\to0$ as $N\to\infty$, the correct time scaling is
$1/c_N$ and any limiting genealogical process will
be a continuous-time Markov chain. We will assume that
$c_N\rightarrow0$ as $N\rightarrow\infty$ throughout this paper.

\smallskip

We write $V_{(1)} \ge V_{(2)} \ge \cdots \ge V_{(N)}$
for the ranked version of $(V_1,\dots,V_N)$ and
\begin{align*}
\Phi_N := \mathscr{L} \Big( {\textstyle \frac{V_{(1)}}{2N}, \frac{V_{(2)}}{2N}, \dots,
 \frac{V_{(N)}}{2N}, 0, 0, \dots} \Big)
\end{align*}
for the law of their \emph{ranked (total) offspring frequencies},
viewed as a probability measure on the
infinite dimensional simplex
$\Delta:=\left\{\(x_1,x_2,\ldots\): x_1\geq x_2\geq\cdots\geq 0, \sum_{i=1}^{\infty}x_i\leq 1\right\}$. For all $x=\(x_1,x_2,\ldots\)\in\Delta$, denote by $|x|:=\sum_{i=1}^{\infty}x_i$ and $\(x,x\):=\sum_{i=1}^{\infty}x_i^2$, put $\mathbf{0} :=
(0,0,\dots) \in \Delta$. We equip $\Delta$ with the
topology of coordinate-wise convergence, metrised e.g.\ via
$d_\Delta(x,y) = \sum_{i=1}^\infty 2^{-i} |x_i-y_i|\,\text{for any}\,x,y\in\Delta$.

We assume that
\begin{align}
\label{eq:PhiNconv}
\frac1{2 c_N} \Phi_N(dx) \mathop{\longrightarrow}_{N\to\infty}
\frac{1}{\(x,x\)} \Xi'(dx)
\quad \text{vaguely on}\; \Delta \setminus \{\mathbf{0}\}
\end{align}
where $\Xi'$ is a probability measure on $\Delta$.
It is possible that the vague limit of the l.h.s.\ in \eqref{eq:PhiNconv}
is a strict sub-probability measure, we then add the remaining
mass to $\Xi'$ as the weight of an atom at $\mathbf{0}$,
i.e., we put $\Xi'(\{\mathbf{0}\}) =
1 - \Xi'\(\Delta \setminus \{\mathbf{0}\}\)$.
It is known that \eqref{eq:PhiNconv} is equivalent to the following
condition (see Lemma \ref{lem:equiv} and Remark~\ref{Remark3} in Section \ref{sec:Discussion}):
\begin{equation}
\label{eq:Vfmcond}
\phi_j(k_1,\ldots,k_j) := \lim_{N\rightarrow\infty}
\frac1{c_N} \frac{\EE\big[\(V_1\)_{k_1}\cdots\(V_j\)_{k_j}\big]}{N^{k_1+\cdots+k_j-j}2^{k_1+\cdots+k_j}}
\quad \text{exists for all $j\in\mathbb{N}$ and }
k_1, \dots, k_j\geq 2.
\end{equation}

Other characterisations of the equivalent conditions \eqref{eq:PhiNconv}
and \eqref{eq:Vfmcond} are recalled in the Appendix (see Conditions I, II
and III).
If either of the conditions \eqref{eq:PhiNconv} and \eqref{eq:Vfmcond}
holds, the two limiting objects are connected via (see Theorem \ref{th:sibling})
\begin{equation}
\label{eq:Vfmrel}
\phi_j(k_1,\ldots,k_j) =
\ind_{\{j=1,k_1=2\}} \cdot 2\Xi'(\{\mathbf{0}\}) +
\int_{\Delta\setminus\{\mathbf{0}\}}
\sum_{\scriptstyle i_1,\dots,i_j=1 \atop
\scriptstyle \text{distinct}}^\infty
x_{i_1}^{k_1} x_{i_2}^{k_2} \cdots x_{i_j}^{k_j} \, \frac{2\Xi'(dx)}{(x,x)}.
\end{equation}
Furthermore, in this case
also the limits
\begin{equation}
\label{eq:Vfmcond2}
\psi_{j,s}(k_1,\ldots,k_j) =
\lim_{N\rightarrow\infty}
\frac1{c_N} \frac{\EE\big[\(V_1\)_{k_1}\cdots\(V_j\)_{k_j}
V_{j+1} \cdots V_{j+s}\big]}{N^{k_1+\cdots+k_j-j}2^{k_1+\cdots+k_j+s}}
\end{equation}
exist, see \cite[Lemma~3.5]{Mohle2001}.
\smallskip

The objects and conditions appearing in \eqref{eq:PhiNconv},
\eqref{eq:Vfmcond} and \eqref{eq:Vfmrel} are familiar from the
theory of coalescents with simultaneous multiple mergers, so called
$\Xi$-coalescents (see \cite{Mohle2001, Schweinsberg2000}).
Let $\Xi$ be a finite measure on $\Delta$. An $n$-$\Xi$-coalescent
is a continuous time (jump-hold) Markov chain $(\xi^n(t))_{t\ge0}$ on
$\mathcal{E}_n$ where in each move, possibly several groups of
 blocks are merged.
If $\eta \in \mathcal{E}_n$ has $b$ blocks and $\eta'$ with $a < b$
blocks arises from $\eta$ by merging $j$ groups of sizes $k_1,\dots,k_j\ge2$
(in particular, there are $s=b-k_1-\cdots-k_j$ ``singleton'' blocks in $\eta$
which do not participate in any merger),
the transition from $\eta$ to $\eta'$ occurs at rate
\begin{align}
\label{eq:Xitransrate}
r_{\eta,\eta'} = \lambda_{b;k_1,\dots,k_j;s}
= \, &
{\ind}_{\{j=1,k_1=2\}}  \Xi(\{\mathbf{0}\}) \notag \\
& {}  +
\int_{\Delta\setminus\{\mathbf{0}\}} \sum^s_{\ell=0} \sum_{\scriptstyle i_1,\dots,i_{j+{\ell}}=1 \atop \scriptstyle \text{distinct}}^\infty
{s \choose \ell} x_{i_1}^{k_1}\cdots x_{i_j}^{k_j} x_{i_{j+1}} \cdots x_{i_{j+\ell}} \(
1- |x|\)^{s-\ell} \frac{\Xi(dx)}{(x,x)}.
\end{align}
With this notation we can now state our main result.
\medskip
\begin{theorem}
\label{thm:res}
Assume that $c_N\to0$ and that the laws of $(V_1,\dots, V_N)$,
derived from $(V_{i,j})$ via \eqref{def:Vi}, satisfy one of
the conditions (\ref{eq:PhiNconv}) and (\ref{eq:Vfmcond}).
Assume also that $\xi^{n,N}(0) = \xi_0 \in \mathcal{S}_n$
for all $N$. Then
\begin{equation*}
  \( \xi^{n,N}\( \lfloor t/c_N \rfloor \) \)_{t\ge0} \longrightarrow
  \( \xi^{n}(t) \)_{t\ge 0}
\end{equation*}
as $N\to \infty$ in the sense of finite-dimensional distributions.
The limit process $\xi^{n}$ is an $n$-$\Xi$-coalescent starting
from $\xi^{n}(0)=\mathsf{cd}(\xi_0)$ with
$\Xi = \Xi' \circ \varphi^{-1}$ where $\Xi'$ is the probability measure on
$\Delta$ appearing on the r.h.s.\ of \eqref{eq:PhiNconv}
and \eqref{eq:Vfmrel}
and $\varphi : \Delta \to \Delta$ is given by
$\varphi(x_1,x_2,x_3,\dots) = (x_1/2,x_1/2,x_2/2,x_2/2,\dots)$.
\end{theorem}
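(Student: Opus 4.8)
The plan is to establish the convergence of the time-rescaled Markov chains $\xi^{n,N}$ on the finite set $\mathcal{S}_n$ by verifying convergence of their one-step transition probabilities and then invoking M\"ohle's matrix limit lemma (as used in \cite{Mohle2001}, cf.\ also \cite{MohleSagitov2003}) in the form adapted to chains with ``fast'', transient states. So the first task is to write $\Pi_N(\xi,\eta):=\PP(\xi^{n,N}(1)=\eta\mid\xi^{n,N}(0)=\xi)$ for $\xi,\eta\in\mathcal{S}_n$ and to show that $\Pi_N = A + c_N B + o(c_N)$ with a stochastic matrix $A$ (independent of $N$) and a limiting matrix $B$. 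By Lemma~\ref{lem:equiv} we may assume \eqref{eq:Vfmcond} holds, so that in addition the limits $\psi_{j,s}$ of \eqref{eq:Vfmcond2} exist by \cite[Lemma~3.5]{Mohle2001} and the representation \eqref{eq:Vfmrel} of the $\phi_j$ in terms of $\Xi'$ (Theorem~\ref{th:sibling}) is available; these will be used to rewrite the limiting rates.

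For the one-step analysis, fix $\xi\in\mathcal{S}_n$, let $b:=|\mathsf{cd}(\xi)|$ be the number of current ancestral genes, distributed among $b-x$ ancestral individuals ($x$ of which carry two ancestral genes). Each ancestral gene is a designated chromosome of a designated current individual, and one generation back it maps to a designated chromosome of one of that individual's two parents; conditioning first on the array $(V_{i,j})$ (hence on all parental assignments) and then on the independent uniform chromosome picks, $\Pi_N(\xi,\eta)$ decomposes as a sum, over ways of placing the $b$ lineages into parent individuals so as to realise the individual-level structure of $\eta$, of two kinds of factors: (i) a combinatorial weight times an expectation of the type $\EE\big[(V_{i_1})_{k_1}\cdots(V_{i_j})_{k_j}V_{i_{j+1}}\cdots V_{i_{j+\ell}}\big]$ normalised by the appropriate power of $N$ (and of $2$) --- except that, since the two ancestral genes of a paired block are forced onto the two \emph{distinct} parents of their individual, some terms also involve the finer quantities $V_{i,j}$; and (ii) a factor $2^{-(\#\,\text{lineages landing in a multiply-occupied parent individual})}$ from the chromosome coins, which also decides which co-located lineages actually coalesce. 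Dividing by $c_N$ and letting $N\to\infty$ produces $B$; note that from a state with a paired block the two co-located chromosomal lineages descend from two distinct parents and hence separate already in the next generation, so $A$ fixes every pair-free state and sends every other $\xi$ to its complete dispersion $\mathsf{cd}(\xi)$, whence $A^2=A$.

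The asymptotic evaluation of these sums is the heart of the matter, and I expect the main obstacle to be the following: every contribution in which the couple-level quantities $V_{i,j}$ genuinely enter --- i.e.\ every ``full-sibling'' effect in which two or more of the $b$ lineages are linked through one couple --- is $o(c_N)$, so that only the moments of $(V_i)$ survive. This is precisely what Theorem~\ref{th:sibling} and the hypothesis that the scaling limit is governed by the law of $(V_i)$ alone provide, and it is exactly the step that removes the fixed-couple restriction of \cite{MohleSagitov2003}; concretely one must bound mixed moments involving powers of $V_{1,2}$ against the $\EE[(V_1)_{k_1}\cdots]$ and show they are negligible on the $c_N$-scale. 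Once only the $(V_i)$-moments remain, the sums produced by the chromosome coins reorganise exactly into the $\Xi$-coalescent rates \eqref{eq:Xitransrate}: an ancestral individual with $V_i\approx 2Nx_i$ offspring distributes them over its two chromosomes, each receiving $\approx V_i/2\approx Nx_i$, hence behaves like two independent paint-box colours of frequency $x_i/2$ --- this is the map $\varphi$ --- and the factors $2$ in \eqref{eq:Vfmrel} are absorbed because $(\varphi(x),\varphi(x))=\tfrac12(x,x)$, giving $\Xi=\Xi'\circ\varphi^{-1}$. As a sanity check, this matches the heuristic that the diploid model is a haploid Cannings model on $2N$ genes with the exchangeable offspring vector $(\eta_1,\dots,\eta_{2N})$, $\eta_{2i-1}\sim\mathrm{Bin}(V_i,\tfrac12)$, $\eta_{2i}=V_i-\eta_{2i-1}$, whose ranked offspring frequencies converge (by the law of large numbers for the binomial splits) to the $\varphi$-image of those of $(V_i)$ and whose pairwise coalescence probability is asymptotically $c_N$, so that \cite{Mohle2001} predicts the same limit --- the genuine but $o(c_N)$ discrepancy being that two chromosomes of one individual cannot share a parent.

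Finally I would conclude via M\"ohle's lemma: with $P:=\lim_m A^m = A$ the ergodic projection onto $\mathcal{E}_n\subset\mathcal{S}_n$, one has $\lim_{N\to\infty}\Pi_N^{\lfloor t/c_N\rfloor}=P\exp(t\,PBP)$ for $t>0$, and a short computation shows that $PBP$, on $\mathrm{range}(P)=\mathcal{E}_n$, is the generator of the $n$-$\Xi$-coalescent with $\Xi=\Xi'\circ\varphi^{-1}$ (the summation over $\tau'$ with $\mathsf{cd}(\tau')=\tau$ in $(PBP)_{\sigma\tau}$ reproducing the $\sum_\ell\binom{s}{\ell}$ and the coin-split bookkeeping in \eqref{eq:Xitransrate}, while purely ``pairing'' moves become self-loops and ``merger $+$ pairing'' moves are $o(c_N)$). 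Since $\delta_{\xi_0}P=\delta_{\mathsf{cd}(\xi_0)}$ this yields the finite-dimensional convergence of $\big(\xi^{n,N}(\lfloor t/c_N\rfloor)\big)_{t>0}$ to the $n$-$\Xi$-coalescent started from $\mathsf{cd}(\xi_0)$ (the process relaxing from $\xi_0$ to $\mathsf{cd}(\xi_0)$ instantaneously at $0^+$, reflecting the discontinuity of M\"ohle's lemma at $t=0$), and the multi-time statement follows in the same way from the Markov property.
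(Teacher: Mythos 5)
Your plan is correct and follows essentially the same route as the paper: decompose $\Pi_{n,N}=A+c_NB_{n,N}$ with $A$ the instantaneous complete-dispersion projection, show that all contributions in which the couple-level array $(V_{i,j})$ enters beyond the marginal totals $(V_i)$ are $o(c_N)$ (the paper carries out exactly this estimate via factorial moments of the binomially thinned quantities $\widehat V_{i,j}$ in Lemma~\ref{lem:Vihatprod}, rather than obtaining it from Theorem~\ref{th:sibling}), identify the limiting rates through the map $\varphi$ using $(\varphi(x),\varphi(x))=\tfrac12(x,x)$, and conclude with M\"ohle's separation-of-time-scales lemma with $P=A$ and generator $PBP$. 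The only cosmetic differences are that the paper computes one-step transitions only from completely dispersed states $\xi\in\mathcal{E}_n$ (which suffices for $PBP$) and handles the $s>0$ singleton bookkeeping by induction via the coalescent consistency relations rather than by reproducing the $\sum_\ell\binom{s}{\ell}$ structure directly.
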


Note that (\ref{eq:Xitransrate}) shows that $\Xi(\{\mathbf{0}\})$ corresponds to the rate for
binary mergers of two {blocks}, which is the dynamics of Kingman's coalescent.
We remark that since $\varphi(\mathbf{0})=\mathbf{0}$ the measures $\Xi'$ and $\Xi$ give the same
mass to $\mathbf{0}$ and so have the same Kingman coalescent component.

We also point out that in Theorem~\ref{thm:res} we only state f.d.d. convergence
since one cannot expect weak convergence on $D([0,\infty), \mathcal{S}_n),$ the set of
 $\mathcal{S}_n$-valued c\`adl\`ag paths equipped with Skorohod's
  $J_1$-topology (see, e.g., \cite[Ch.~3.4]{EK86}). This is because whenever two ancestral
  genes descend from the same parental individual the probability that they descend from different
  ancestral genes (carried by the parental individual) is $1/2,$
  as is the probability that they descend from the same ancestral gene (resulting in a coalescent event).
  We have chosen our scaling such that  the latter event happens at a finite rate in the limit. Thus, also the
  former event, which creates some partition $\xi \in \mathcal{S}_n \setminus \mathcal{E}_n$ happens at a positive rate in the limit.
  But the reason we have f.d.d. convergence in  $\mathcal{E}_n$ in Theorem~\ref{thm:res} is that
  in the limit any $\xi \in \mathcal{S}_n \setminus \mathcal{E}_n$ transitions instantaneously to $\mathsf{cd}(\xi)\in \mathcal{E}_n$.
  Thus, due to the discrete topology on $\mathcal{S}_n$ we always have a non-vanishing probability of an accumulation of jumps of
  finite size which precludes weak convergence in Skorohod's
  $J_1$-topology. However, if we instead consider the process which tracks the succession of complete dispersion states
  then weak convergence on $D([0,\infty), \mathcal{E}_n)$ holds:

\begin{corollary}
  \label{cor:res}
  Let $\widetilde{\xi}^{n,N}(m) := \mathsf{cd}\big( \xi^{n,N}(m)
  \big) \in \mathcal{E}_n$ be the ancestral partition of the $n$~sampled genes
  $m$~generations in the past, irrespective of the grouping into diploid
  individuals.  Under the assumptions of Theorem~\ref{thm:res}, we
  have
  \begin{equation*}
    \( \widetilde{\xi}^{n,N}\( \lfloor t/c_N \rfloor \) \)_{t\ge0} \mathop{\longrightarrow}_{N\to\infty}
    \( \xi^{n}(t) \)_{t\ge 0} \quad
    \text{ weakly on } D([0,\infty), \mathcal{E}_n)
  \end{equation*}
  and the limit  process is the $n$-$\Xi$-coalescent from Theorem~\ref{thm:res}.
\end{corollary}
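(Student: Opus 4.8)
\textbf{Proof plan for Corollary~\ref{cor:res}.}
The plan is to upgrade the finite-dimensional-distribution convergence of Theorem~\ref{thm:res} to weak convergence in the Skorohod $J_1$-topology on $D([0,\infty),\mathcal{E}_n)$ for the complete-dispersion process $\widetilde{\xi}^{n,N}$. Since the state space $\mathcal{E}_n$ is finite (hence discrete and compact), $D([0,\infty),\mathcal{E}_n)$-valued sequences are much more tractable than in the general Polish setting, and the standard route is: (i) identify the limit of the f.d.d.'s, and (ii) establish tightness (equivalently, rule out accumulation of small jumps) via a control on the modulus of continuity or via Aldous' criterion. Step (i) is essentially free: $\widetilde{\xi}^{n,N}(m) = \mathsf{cd}(\xi^{n,N}(m))$, the map $\mathsf{cd}\colon\mathcal{S}_n\to\mathcal{E}_n$ is continuous (both spaces discrete), and the limit process $\xi^n$ of Theorem~\ref{thm:res} already lives on $\mathcal{E}_n$ with $\xi^n(0)=\mathsf{cd}(\xi_0)$; so the f.d.d.'s of $\widetilde{\xi}^{n,N}(\lfloor t/c_N\rfloor)$ converge to those of $\xi^n$.

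For step (ii), the natural first move is to observe that $(\widetilde{\xi}^{n,N}(m))_{m\ge0}$ is itself a Markov chain on $\mathcal{E}_n$: this is the content of the $J_1$-obstruction discussion just before the corollary — the "problematic" intermediate states in $\mathcal{S}_n\setminus\mathcal{E}_n$ are exactly the ones that $\mathsf{cd}$ collapses, and once we only record $\mathcal{E}_n$-values the chain is (time-homogeneous) Markov because the conditional law of $\widetilde{\xi}^{n,N}(m+1)$ given the full past depends only on $\mathsf{cd}$-state through exchangeability. Then I would extract the one-step transition probabilities $P^N(\eta,\eta')$ for $\eta\neq\eta'$ and show, using the offspring-moment asymptotics feeding into \eqref{eq:Vfmcond}, \eqref{eq:Vfmcond2} and the computation of $c_N$ in \eqref{eq:cN1}, that $c_N^{-1}P^N(\eta,\eta')\to q_{\eta,\eta'}$, the $\Xi$-coalescent rates $\lambda_{b;k_1,\dots,k_j;s}$ of \eqref{eq:Xitransrate} with $\Xi=\Xi'\circ\varphi^{-1}$ (this is exactly the generator convergence already implicit in the proof of Theorem~\ref{thm:res}), and that $c_N^{-1}(1-P^N(\eta,\eta))$ stays bounded. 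Bounded total jump rate on a finite state space, together with convergence of the rescaled one-step transition matrix to the $Q$-matrix of a (non-explosive, since finite) continuous-time Markov chain, is a classical sufficient condition for weak convergence of the time-rescaled chain to that continuous-time chain on $D([0,\infty),\mathcal{E}_n)$ — see e.g.\ \cite[Ch.~4]{EK86}; one may either quote such a theorem directly or verify Aldous' tightness criterion by hand, noting that between jumps the chain is constant and the jump times are stochastically bounded below by geometrics with success probability $O(c_N)$, so rescaled by $1/c_N$ they do not accumulate.

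The main obstacle — and the reason the corollary is not quite immediate from Theorem~\ref{thm:res} — is the Markov property of $\widetilde{\xi}^{n,N}$ together with uniform control of its holding times: one has to make precise that passing to $\mathcal{E}_n$-states loses no information relevant to the future, and that the "hidden" $1/2$-coin flips (which decide whether two genes in a common parental individual coalesce) only contribute at the $O(c_N)$ time scale, so they do not produce a cascade of $\widetilde{\xi}^{n,N}$-jumps over an interval of length $O(1)$ in rescaled time. Once the one-step transition asymptotics $c_N^{-1}P^N(\eta,\eta')\to\lambda_{b;k_1,\dots,k_j;s}$ are in hand (which is a byproduct of whatever computation proves Theorem~\ref{thm:res}), uniform boundedness of the exit rates follows from $\sum_{\eta'\neq\eta}\lambda_{b;k_1,\dots,k_j;s}<\infty$ on the finite space $\mathcal{E}_n$, and tightness is then routine. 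I would therefore structure the proof as: (1) $\widetilde{\xi}^{n,N}$ is a Markov chain on $\mathcal{E}_n$; (2) its rescaled generator converges to that of the $n$-$\Xi$-coalescent (cite the generator computation from the proof of Theorem~\ref{thm:res}); (3) exit rates are uniformly bounded, hence tightness; (4) conclude via a standard weak-convergence theorem for jump Markov processes on a finite state space.
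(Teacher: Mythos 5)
There is a genuine error at the foundation of your step (1): for finite $N$ the process $\widetilde{\xi}^{n,N}=\mathsf{cd}(\xi^{n,N})$ is \emph{not} a Markov chain on $\mathcal{E}_n$, and the paper says so explicitly (Remark~\ref{Remark1} in Section~\ref{sec:Discussion}: ``For finite $N$, the process $\widetilde{\xi}^{n,N}$ is in general not a Markov chain, this is one of the reasons why we consider $\xi^{n,N}$ in Theorem~\ref{thm:res}.''). The grouping into diploid individuals that $\mathsf{cd}$ discards is precisely the information that matters for the next step: two ancestral genes sitting in the \emph{same} diploid individual necessarily descend from its two distinct parents (no selfing) and so cannot coalesce in the next generation, whereas two genes in distinct individuals coalesce with probability $c_N$. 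Already for $n=2$ the transition law of $\widetilde{\xi}^{2,N}$ out of the state $\{\{1\},\{2\}\}$ depends on whether the two lineages currently share an individual, so ``the conditional law of $\widetilde{\xi}^{n,N}(m+1)$ given the full past depends only on the $\mathsf{cd}$-state'' is false. Consequently your steps (2) and (4) — extracting a one-step transition matrix $P^N(\eta,\eta')$ on $\mathcal{E}_n$ and invoking a convergence theorem for time-rescaled jump Markov chains — do not apply as stated.

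The skeleton you propose (f.d.d. convergence from Theorem~\ref{thm:res} plus tightness via non-accumulation of jumps on the finite, compact state space $\mathcal{E}_n$) is nevertheless the right one and is what the paper does, following M\"ohle's argument in \cite{Mohle99}. The fix is to run the tightness estimate on the genuinely Markovian chain $\xi^{n,N}$ on $\mathcal{S}_n$: for \emph{every} $\xi\in\mathcal{S}_n$ one has
$\PP\big(\mathsf{cd}(\xi^{n,N}(m+1))\neq\mathsf{cd}(\xi)\,\big|\,\xi^{n,N}(m)=\xi\big)\le\binom{n}{2}c_N$,
since a visible change of the $\mathsf{cd}$-image requires at least one pair of ancestral genes (necessarily in distinct individuals) to merge. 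This uniform-in-$\xi$ bound gives that the inter-jump times of $\widetilde{\xi}^{n,N}$ are stochastically bounded below by i.i.d.\ geometrics with success parameter $\binom{n}{2}c_N$, which after rescaling by $1/c_N$ converge to exponentials, so jumps do not accumulate; no Markov property of $\widetilde{\xi}^{n,N}$ and no identification of its (nonexistent) rescaled generator is needed. Your closing sentence about geometric lower bounds is exactly this estimate, but as written you derive it from the false Markov claim rather than by conditioning on the full $\mathcal{S}_n$-valued state.
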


Before continuing we briefly outline the structure of the remaining paper.
After discussing our main result and its relation to the literature in Section~\ref{sec:Discussion}
we consider various examples and discuss their biological motivation in Section~\ref{sect:Examples}.
In particular, we study two diploid variations of a model by Schweinsberg\ \cite{Schweinsberg2003}
in Section~\ref{Section:Exrff} and \ref{section:supercriticalGaltonWatson}.
We also discuss the relation to previous results on coalescents for diploid population models
and possible extensions in more detail in Section \ref{sect:ExamplesDiscussion}.

The final Section~\ref{Section:Proofs} is dedicated to the proofs of our main results. In Section~\ref{s3}
we prove Theorem~\ref{thm:res} and Corollary~\ref{cor:res}. In Sections~\ref{sProofRandomIndFitness}
and \ref{sProofSchweinsberg} we prove the more technical convergence results for the models considered
in Section~\ref{sect:Examples}: Proposition~\ref{Exrff:prop1} of Section~\ref{Section:Exrff} is proven in
Section~\ref{sProofRandomIndFitness} and Proposition~\ref{prop:diploidSchweinsberg} of
Section~\ref{section:supercriticalGaltonWatson} in Section~\ref{sProofSchweinsberg}.

\subsection{Discussion}
\label{sec:Discussion}

In this section we first give a brief overview over existing coalescent theory in the haploid and diploid setting.
Subsequently, we make several remarks regarding our main results.

Classical large population approximation results  in the haploid setting can be found in Kingman \cite{Kingman82a,Kingman82b}, where a convergence theorem to the classical coalescent (nowadays known as Kingman's coalescent) is established for a class of exchangeable populations.
In recent years, there has been a tremendous development in coalescent theory. We refer to Pitman \cite{Pitman99}, Sagitov \cite{Sag99} and Donnelly \& Kurtz \cite{DK99} for coalescents with multiple mergers and to Schweinsberg \cite{Schweinsberg2000} and M\"ohle \& Sagitov \cite{Mohle2001} for coalescents with simultaneous multiple mergers. At the same time, coalescent theory has been applied to more complex population models. Sagitov \cite{Sag99} deduced a necessary condition for  the convergence of the haploid ancestral process to coalescents with multiple mergers. M\"ohle and Sagitov \cite{Mohle2001} then fully classified haploid exchangeable population models, so called Cannings models,  in terms of the
convergence of their ancestral lines to coalescents with simultaneous multiple mergers. They characterised the coalescent generators in terms of the joint moments of offspring sizes as well as in terms of a sequence of measures defined on the infinite dimensional simplex. Subsequently, Sagitov \cite{Sagitov2003} presented a criterion of weak convergence to the coalescent with simultaneous multiple mergers by a scaled vector of the ranked offspring sizes which constitute a given generation.

For diploid population models, the available theory has been more limited.
M\"ohle \cite{Mohle98a} introduced a diploid population model with selfing and studied the ancestral process in the Wright-Fisher case. He proved that in this case the limit is Kingman's coalescent.
We recover M\"ohle's result without selfing as a special case of our general result, see Section~\ref{sect:Examples}.
In M\"ohle \cite{Mohle98b} it was proved that the scaled ancestral process of $n$ sampled genes in the two-sex Wright-Fisher model behaves like Kingman's coalescent. In this context, M\"ohle also derived coalescence estimates for general offspring mechanisms if only two genes are sampled.
Subsequently, M\"ohle and Sagitov \cite{MohleSagitov2003}  completely classified the coalescent patterns in two-sex diploid exchangeable population models and established conditions for the limiting scaled ancestral process  to either be Kingman's coalescent or the coalescent with (simultaneous) multiple mergers.
In contrast to our set-up, individuals are either male or female ($N$ individuals each) and in each generation $N$ couples are formed that have children according to a general exchangeable offspring distribution. Sexes are again assigned randomly conditioned on there being again $N$ males and $N$ females. This is a special case of our result, see Section \ref{sect:randompairs}.
In fact, Theorem~\ref{thm:res} is in a sense an explicitly worked-out version of the remarks in \cite[Section~7]{MohleSagitov2003}.

Birkner et al.\ \cite{BBE13} studied a diploid Moran type population model in which two individuals drawn uniformly at random contribute a (potentially) large number of offspring relative to the total population size. They proved that due to this property and the diploid inheritance the scaled ancestral process admits in the limit simultaneous multiple mergers in up to four groups. In Section \ref{sect:Exampleonelargefamily} we give more details on the relationship to our main results. In particular, the single-locus analogues of Theorems 1.2 and 1.3 in \cite{BBE13} can be recovered as a special case of Theorem~\ref{thm:res}.

\bigskip
\noindent
We would like to emphasize a number of points regarding our main results:
  \begin{enumerate}[1.]
  \item \label{Remark1}
    Broadly speaking, Theorem~\ref{thm:res} says that we can (for
    $N$ large) use ``equivalent'' sampling on the gene level and ignore
    the grouping of genes into diploid individuals. This phenomenon has
    been observed many times before (e.g.\ in \cite{MohleSagitov2003} and \cite{BBE13}), it is explained
    by an asymptotic separation of time-scales: the ``breaking up''
    of grouping into diploids is much faster than non-trivial coalescence
    on the gene level (see the proof of Theorem~\ref{thm:res}).

    For finite $N$, the process $\widetilde{\xi}^{n,N}$ is in general not a Markov chain,
    this is one of the reasons why we consider $\xi^{n,N}$ in Theorem~\ref{thm:res}. However,
    the limit process is Markovian.
    \medskip

  \item  \label{Remark2}
    $\big( \xi^{n}(t) \big)_{t\ge 0}$ can in a natural way be
    interpreted as a tree describing the genealogy of $n$ sampled
    genes. In population genetics applications, functionals of this
    tree, in particular the total length
    \begin{align*}
      L_{\mathrm{tot}}(\xi^n) & := \int_0^\tau \# \xi^{n}(t) \, dt \qquad \text{ with } \tau := \inf\{ t \ge 0 : \# \xi^{n}(t) = 1 \} \\
      \intertext{and the length of all branches subtending $i$ leaves
        for $i \in \{1,2,\dots,n-1\}$} L_i(\xi^n) & := \int_0^\tau \#
      \big\{ C \in \xi^{n}(t) : \# C = i \big\}\, dt
    \end{align*}
    are of interest. By Corollary~\ref{cor:res}, the distribution of such
    functionals of $\big( \widetilde{\xi}^{n,N}\big( \lfloor t/c_N \rfloor \big) \big)_{t\ge0}$
    converges as well.
    \medskip

  \item \label{Remark3}
    Note the normalisation with $2 c_N$ in \eqref{eq:PhiNconv}.
    The expression in \eqref{eq:Vfmrel} is the limit object related to sampling according to
    the (``haploid'') offspring vectors $(V_1,\dots,V_N)$ and the
    asymptotically correct scaling of $\Phi_N$ (so that the corresponding limit object
    $\Xi'$ is a probability measure on $\Delta$) is given by $1/c_N'$ with
    \begin{equation*}
      c_N' = \EE\left[ \frac1{2N(2N-1)} \sum_{i=1}^N V_i (V_i-1) \right]
      = \frac{\EE[V_1(V_1-1)]}{2(2N-1)},
    \end{equation*}
    see e.g.\ \cite[Eq.~(1.5)]{Sagitov2003}.
    We can interpret $c_N'$ as referring to sampling directly on the level of
    chromosomes where $c_N$ as defined in \eqref{eq:cN1} refers to sampling
    on the level of diploid individuals. We have $c_N' \sim 2 c_N$ for
    $N\to\infty$ (see also Lemma~\ref{lem:cN})
    and our normalisation in \eqref{eq:Vfmrel} entails $\phi_1(2)=2$.
    \medskip


  \item \label{Remark4}
    \eqref{eq:exchangeable} says that $(V_{i,j})$ is a finite
    jointly exchangeable array. A related notion is that of
    ``separately exchangeable arrays'' where rows and columns may use different permutations.
    See also the discussion in Section~\ref{sect:ExamplesDiscussion} and
    see e.g.\ \cite{Kallenberg05}, \cite{Austin08} for general background on exchangeable arrays.


  \end{enumerate}

\section{Examples}\label{sect:Examples}

We will now apply Theorem \ref{thm:res} to various examples. Some of these
have been considered in the literature before, and we recover the
known limiting results in an efficient way, some of the examples are
new or analysed here in a more general setting.

\smallskip

Arguably, the simplest diploid population model is
the diploid Wright-Fisher model 
where each individual in
the children's generation is independently assigned two distinct
parents by drawing twice without replacement from the parent's
generation; the joint distribution of $\(V_{i,j}\)_{1\leq
  i<j\leq N}$ is then a ${N\choose 2}$-dimensional multinomial distribution
with uniform weights.
This model was considered e.g.\ by M\"ohle \cite{Mohle98a} and
to set the stage we briefly discuss how we recover his
result for the case with no selfing ($s=0$ in M\"ohle's
\cite{Mohle98a} notation) 
from Theorem~\ref{thm:res}.
\begin{proposition}
  \label{ex:diploidWF}
  In the diploid Wright-Fisher model without selfing, using time-scaling with $c_N=1/(2N)$, the limiting
  coalescent (in the f.d.d. sense) is Kingman's coalescent (which corresponds to the case
  $\Xi'=\Xi=\delta_{\mathbf{0}}$ in Thm.~~\ref{thm:res}).
\end{proposition}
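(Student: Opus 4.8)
The plan is to verify the hypotheses of Theorem~\ref{thm:res} directly for the diploid Wright--Fisher model without selfing, and to identify the resulting limiting measure $\Xi'$ as $\delta_{\mathbf 0}$. First I would record the distribution of the pairwise offspring array: in this model $(V_{i,j})_{1\le i<j\le N}$ is multinomial on $\binom N2$ categories with $N$ trials and uniform cell probabilities $1/\binom N2$. From this one computes the needed mixed moments. In particular $\EE[(V_1)_2]$ can be obtained either by a direct combinatorial argument (an ordered pair of distinct children of individual~$1$: the first child picks~$1$ as one of its two parents with probability $2/N$, etc.) or from the multinomial moments via \eqref{def:Vi}; the upshot is $\EE[(V_1)_2] = \frac{(N-1)}{N} \cdot 2 \cdot \frac{2}{N}\cdot N \cdot \frac{1}{2}\cdots$ — in any case $\EE[(V_1)_2] \sim 4/N \cdot \tfrac{N(N-1)}{?}$, and plugging into the last expression in \eqref{eq:cN1}, $c_N = \frac{1}{8(N-1)}\EE[(V_1)_2]$, gives $c_N = 1/(2N)$ (this matches the stated scaling; I would present this computation cleanly rather than the messy version above).

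Next I would check condition \eqref{eq:Vfmcond}. The key point is a moment bound: for the Wright--Fisher offspring numbers one has, for $j\ge 1$ and $k_1,\dots,k_j\ge 2$,
\[
\EE\big[(V_1)_{k_1}\cdots(V_j)_{k_j}\big] = O\!\left(N^{\,k_1+\cdots+k_j-j}\right) \cdot \frac{1}{N^{\,k_1+\cdots+k_j-1}}\cdot N^{\,?},
\]
i.e.\ the relevant ratio $\EE[(V_1)_{k_1}\cdots(V_j)_{k_j}]/(N^{k_1+\cdots+k_j-j}2^{k_1+\cdots+k_j})$ is of order $1/N^{k_1+\cdots+k_j-1}$ up to constants, which after dividing by $c_N\asymp 1/N$ still vanishes unless $j=1$ and $k_1=2$. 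Concretely, each factor $(V_i)_{k_i}$ contributes, by the combinatorial/multinomial moment formula, a factor $\asymp (2/N)^{k_i} N^{k_i} = 2^{k_i}$ worth of "mass" but with a constraint forcing at least $\sum k_i - (\text{number of distinct children used})$ extra powers of $1/N$; since $k_i\ge 2$, the only non-vanishing limit after the $c_N^{-1}$ scaling is $\phi_1(2)$. This gives $\phi_1(2)=2$ (consistent with Remark~\ref{Remark3}) and $\phi_j(k_1,\dots,k_j)=0$ otherwise. By the relation \eqref{eq:Vfmrel}, the only measure $\Xi'$ on $\Delta$ producing these moments is $\Xi'=\delta_{\mathbf 0}$: indeed $\phi_1(2) = 2\Xi'(\{\mathbf 0\}) + \int_{\Delta\setminus\{\mathbf 0\}} (x,x)\,\frac{2\Xi'(dx)}{(x,x)} = 2$ forces no constraint by itself, but vanishing of all higher $\phi$'s forces $\Xi'(\Delta\setminus\{\mathbf 0\})=0$ (any mass off $\mathbf 0$ would make some $\phi_j$ positive).

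Finally, Theorem~\ref{thm:res} applies and yields convergence to the $n$-$\Xi$-coalescent with $\Xi = \Xi'\circ\varphi^{-1} = \delta_{\mathbf 0}\circ\varphi^{-1} = \delta_{\mathbf 0}$ (since $\varphi(\mathbf 0)=\mathbf 0$), i.e.\ $\Xi=\delta_{\mathbf 0}$, which by the remark following Theorem~\ref{thm:res} is exactly Kingman's coalescent. The main obstacle is the moment estimate in \eqref{eq:Vfmcond}: one must handle the joint falling factorials of the $V_i$'s, which are sums over pairs, and show the $1/N$ powers work out — this is a bookkeeping argument on how many distinct children and distinct "second parents" are forced by the factorial structure, and it is worth doing carefully, perhaps by first passing to the (haploid) Cannings model with population size $2N$ and offspring vector $(V_i)$ as indicated after \eqref{eq:Vexch}, where the Wright--Fisher-type moment asymptotics are standard.
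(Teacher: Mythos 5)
Your argument is correct in outline, but it takes a different route from the paper's. The paper proves this proposition in one line by specialising the random-fitness model of Section~\ref{Section:Exrff} to $W_i \equiv 1$ and invoking Proposition~\ref{Exrff:prop1}, case~1 (mentioning as an alternative M\"ohle's sharp third-factorial-moment criterion). You instead verify condition \eqref{eq:Vfmcond} directly from the multinomial structure, which is legitimate and self-contained but more laborious than necessary, and your key moment displays are still placeholders. Two points would tighten the write-up. First, since each child picks its unordered parent pair uniformly from the $\binom{N}{2}$ pairs, $V_1 \sim \mathrm{Bin}(N,2/N)$ exactly, so $\EE[(V_1)_k] = (N)_k(2/N)^k \to 2^k$; in particular $\EE[(V_1)_2] = 4(N-1)/N$ gives $c_N = \EE[(V_1)_2]/(8(N-1)) = 1/(2N)$ cleanly, and the joint bound $\EE[(V_1)_{k_1}\cdots(V_j)_{k_j}] = O(1)$ follows from the multinomial factorial-moment formula (Lemma~\ref{lem:Multinomfactorialmoments} applied to the array $(V_{i,j})$), whence the ratio in \eqref{eq:Vfmcond} is $O\big(N^{1-(k_1+\cdots+k_j-j)}\big)$ and vanishes unless $j=1$, $k_1=2$ — this replaces your ``$?$''-marked displays. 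Second, you do not need all of \eqref{eq:Vfmcond}: as you in effect observe when arguing from \eqref{eq:Vfmrel}, $\phi_1(3)=0$ already forces $\Xi'(\Delta\setminus\{\mathbf{0}\})=0$ because $\sum_i x_i^3>0$ off $\mathbf{0}$, so checking $\EE[(V_1)_3]/(c_N N^2)\to 0$ suffices; this is exactly the paper's ``alternative'' argument. Your final identification $\Xi=\Xi'\circ\varphi^{-1}=\delta_{\mathbf{0}}$ and the conclusion that the limit is Kingman's coalescent are correct.
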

\begin{proof}
  By choosing $W_i \equiv 1$ in Section~\ref{Section:Exrff}
    this model is a special case of the class considered there and the
    result follows from Proposition~\ref{Exrff:prop1},
    case~1. Alternatively, one can easily check that the (sharp)
    criterion on the third factorial moment of $V_1$ from M\"ohle
    \cite[Eq.~(14) in Sect.~4]{Mohle00} for convergence to Kingman's
    coalescent is satisfied.

\end{proof}
\medskip

It is well known that the class of possible coalescent processes
arising as limiting genealogies in population models is much richer
than just Kingman's coalescent.  An important family of examples for
the haploid case is given by the $\mathrm{Beta}(2-\alpha,
\alpha)$-coalescents, where $0 < \alpha < 2$. For the sub-case $1 \le
\alpha < 2$ these are well motivated by a class of models that were
proposed and analysed by Schweinsberg\ \cite{Schweinsberg2003},
which work as follows: Let each generation consist of $N$
haploid (adult) individuals. Individual $i$ produces $X_i$ juveniles,
where $X_1,\dots,X_N$ are independent copies of $X$ with $\EE[X]>1$ and
$X$ has a (strictly) regularly varying tail,
\begin{equation}
  \label{eq:SchweinsbergXtail}
\PP(X > x) \sim c x^{-\alpha} \quad \text{as } x\to\infty
\end{equation}
with $c \in (0,\infty)$. Then, $N$ of the $S_N = X_1+\cdots+X_N$ ($\,
> N$ typically) juveniles are drawn at random without replacement to
form the next (adult) generation.  It turns out (see
\cite[Thm.~4]{Schweinsberg2003}) that in the limit $N \to \infty$, the
suitably scaled genealogies of samples from such a population model
converge to a $\mathrm{Beta}(2-\alpha, \alpha)$-coalescent.  This is a
particular so-called $\Lambda$-coalescent. In the notation of
\eqref{eq:Xitransrate} it is given by
\[
\Xi \(dx\)= \int_{[0,1]} \delta_{\(x,0,0,\dots\)} \, \mathrm{Beta}\(2-\alpha, \alpha\)\(dx\)
\]
where $\mathrm{Beta}(2-\alpha, \alpha)$ is the probability law on $[0,1]$ with density
\begin{equation}
  \label{eq:Betadens}
  \frac1{B\(2-\alpha,\alpha\)} x^{1-\alpha} \(1-x\)^{\alpha-1}, \; \; 0 < x < 1.
\end{equation}
Here, for $a, b >0$, $B(a,b) = \Gamma(a)\Gamma(b)/\Gamma(a+b)$ denotes the Beta-function.
There is also a rich
mathematical structure linking these particular $\Lambda$-coalescents to
stable branching processes, see e.g. \cite{BBS07, BBCEMSW05}.
\smallskip

This model captures situations where occasionally some individuals can, for example
due to environmental fluctuations, possibly produce many more
offspring than others (note that \eqref{eq:SchweinsbergXtail} with $\alpha<2$ implies
$\mathrm{Var}[X_i] = \infty$). It is thus a possible mathematical
formalisation of the concept of ``sweepstakes reproduction'' that
appears in the biological literature, see e.g.\ Eldon and Wakeley
\cite{EW06} and the discussion and references therein.
\medskip

There are various possibilities how one can extend this model -- literally
or in spirit -- to a diploid scenario. We explore two such possibilities
below in more detail: In Section~\ref{Section:Exrff} we assign each individual $i$
in a given generation independently a random ``fitness value'' $W_i \ge 0$ and
decree that each child has a chance $\propto W_i W_j$ to descend from couple $(i,j)$, i.e.\
the joint law of offspring numbers is given by \eqref{Exrff:eq:lawVij}.
In Section~\ref{section:supercriticalGaltonWatson}, each couple $(i,j)$
independently produces $X_{i,j}$ juveniles and then $N$ out of the $\sum_{i<j\le N} X_{i,j}$
juveniles are drawn at random to form the next generation, analogous to \cite{Schweinsberg2003}.
\smallskip

It turns out that two distinct forms of ``diploid $\mathrm{Beta}(2-\alpha, \alpha)$-coalescents''
arise from these two set-ups: The limiting $\Xi$ in Section~\ref{Section:Exrff} arises
from a $\mathrm{Beta}(2-\alpha, \alpha)$-distributed $x$ by replacing it with \emph{two} equal weights
$x/4$ (see Equation~\eqref{Exrff:prop1.e2} in Proposition~\ref{Exrff:prop1}) whereas in
Section~\ref{section:supercriticalGaltonWatson} it is split into \emph{four} equal weights
(see (see Equation~\eqref{eq:diploidSchweinsbergXi4} in Proposition~\ref{prop:diploidSchweinsberg}).

Intuitively, this can be understood as follows: In
Section~\ref{Section:Exrff}, the dominant contribution comes from
situations when one $W_i$ is exceptionally large ($\approx O(N)$)
whereas all others are much smaller; then there is a large family of
half-siblings and the two weights correspond to the two chromosome
copies of the exceptional individual $i$; all the other parents will typically have a total
  number of offspring which is negligible in comparison to $N$ and
  none of their genes will be involved in a multiple merging event.
On the other hand, in
Section~\ref{section:supercriticalGaltonWatson} the dominant
contribution comes from cases when $X_{i,j} \approx O(N)$ for exactly
one couple $(i,j)$ and all other $X_{k,\ell}$ ($\{k,\ell\} \neq
\{i,j\}$) are much smaller; then there is a large family of full
siblings and the four weights correspond to the four chromosome copies
of the two individuals in the successful couple.

In particular, we see that the answer to the question which diploid coalescent is
appropriate for a given biological population with potentially highly
skewed individual reproductive success can depend on the typical mating
behaviour.
\medskip

We describe and analyse these diploid variations of Schweinsberg's
\cite{Schweinsberg2003} model in more detail in
Sections~\ref{Section:Exrff} and
\ref{section:supercriticalGaltonWatson} below.  In
Section~\ref{sect:ExamplesDiscussion}, we briefly discuss how results
of previous studies of diploid population models, especially from
 \cite{BBE13} and \cite{MohleSagitov2003}, fit into our framework.
We also mention possible extensions and additional examples there.
\medskip

The perspicacious reader will observe that in Propositions\
\ref{Exrff:prop1} and \ref{prop:diploidSchweinsberg} below (compare
Assumptions\ \eqref{Exrff:eq:tailassumptW1} and \eqref{ass:Xtail},
respectively), we have excluded the boundary cases $\alpha=2$ and
$\alpha=1$.  In view of Schweinsberg's \cite{Schweinsberg2003} results
for the haploid case, we expect the following: For $\alpha=2$,
$c_N \sim c (\log N)/N$ in Lemmas\ \ref{Exrff:lem:cN} and
\ref{lemma:SchweinsbergcN} and convergence to Kingman's coalescent;
for $\alpha=1$, $c_N \sim c/\log N$ and in both
Proposition~\ref{Exrff:prop1},~2.\ and
Proposition~\ref{prop:diploidSchweinsberg},~2., the uniform
distribution on $[0,1]$ will appear, i.e.\ the limiting coalescent
will then be a variation on the Bolthausen-Sznitman-coalescent where
jumps are broken into two groups and into four groups, respectively.

We leave the details to future work.

\subsection{Diploid population model with random individual fitness}
\label{Section:Exrff}

Let $W_1, W_2, \ldots \ge 0$ be independent copies of nonnegative random variables $W$ with $\mu_W := \EE[W] > 0$, put
\begin{align}
  \label{Exrff:defZN}
  Z_N := \sum_{1 \le i < j \le N} W_i W_j
  = \frac12 \( \sum_{i=1}^N W_i \)^2 - \frac12 \sum_{i=1}^N W_i^2.
\end{align}
Given the $W_i$'s let
\begin{align}
  \label{Exrff:eq:lawVij}
  \(V_{i,j}\)_{1\le i < j \le N} \mathop{=}^d
  \mathrm{Multinomial}\({N}, \frac{W_1 W_2}{Z_N}, \frac{W_1 W_3}{Z_N}, \dots, \frac{W_{N-1} W_N}{Z_N}\).
\end{align}
(we will see in Section \ref{sProofRandomIndFitness} that the event $Z_N=0$ has negligible probability in the limit $N\to\infty$).
Note that when the $W_i$'s are identical, 
\eqref{Exrff:eq:lawVij} coincides literally with our version of the diploid Wright-Fisher model, see
Proposition~\ref{ex:diploidWF}.
\smallskip

The ``fitness'' in this section's title is not based on an explicitly
modelled genetic type and is not passed on to offspring as the values
are drawn afresh in each generation. The offspring distribution in  \eqref{Exrff:eq:lawVij}
may be appropriate for a population with high individual reproductive
potential (thinking e.g.\ of plants or marine species that can in
principle produce large numbers of seeds or eggs) in an environment
that fluctuates rapidly both in space and time. In reality, there may
be a very complex and highly variable interplay between ecological and
genetic factors that determine the reproductive success of a given
individual at a given time. All this would be subsumed in this model
into a random ``effective fitness parameter'' $W$.

For the fitness parameter we will consider separately the finite variance case
as well as the case of (strictly) regularly varying tails such that
\begin{align}
\label{Exrff:eq:tailassumptW1}
\PP(W \ge x) \sim c_W x^{-\alpha} \quad \text{as } x \to \infty\,\,\,\,\text{for some $c_W \in (0,\infty)$ and $1 < \alpha < 2.$}
\end{align}

Before stating the convergence result we specify the asymptotic behavior
of the scaling parameter $c_N$ as specified in  (\ref{eq:cN1}).
A key quantity is
\begin{equation}
  \label{def:QN}
  Q_N:=\sum_{j=2}^{N}\frac{W_1W_j}{Z_N},
\end{equation}
the probability that a randomly chosen child is an offspring of parent $1$.


\begin{lemma}
  \label{Exrff:lem:cN}
  The pair coalescence probability over one generation
  for $V_{i,j}$'s as in \eqref{Exrff:eq:lawVij} is given by
  \begin{align}
    \label{Exrff:eq:cNformula}
    c_N & = \frac{N}{8} \EE\left[ Q_N^2\right].
  \end{align}
  1.\ If $ \mu_W^{(2)}: = \EE\big[ W^2 \big] < \infty$ we have
  \begin{align}
    \label{Exrff:eq:cNKingman}
    c_N \sim C_\mathrm{pair}^{(\mathrm{Kingm})} N^{-1} \quad \text{as } N\to\infty \quad \text{with}\;\;
    C_\mathrm{pair}^{(\mathrm{Kingm})} = \frac{\mu_W^{(2)}}{2 \mu_W^2}.
  \end{align}
  2.\ If \eqref{Exrff:eq:tailassumptW1} holds we have
  \begin{align}
    \label{Exrff:eq:cNBeta}
    c_N \sim C_\mathrm{pair}^{(Beta)} N^{1-\alpha} \quad \text{as } N\to\infty
    \quad \text{with}\;\; C_\mathrm{pair}^{(\mathrm{Beta})} = c_W (2/\mu_W)^\alpha \alpha \Gamma(2-\alpha)\Gamma(\alpha)/8.
  \end{align}
\end{lemma}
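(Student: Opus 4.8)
The plan is to establish the two asymptotic regimes from the exact formula $c_N = \tfrac{N}{8}\EE[Q_N^2]$, so the first step is to prove that identity. Starting from the diploid pair-coalescence probability in \eqref{eq:cN1} we have $c_N = \tfrac{1}{8(N-1)}\EE[(V_1)_2]$, and for the multinomial law \eqref{Exrff:eq:lawVij}, conditionally on $(W_i)$, the total offspring count $V_1 = \sum_{j\ne 1} V_{1,j}$ is itself $\mathrm{Binomial}(N, Q_N)$ with $Q_N$ as in \eqref{def:QN}. Hence $\EE[(V_1)_2 \mid (W_i)] = (N)_2 Q_N^2$, and averaging gives $c_N = \tfrac{(N)_2}{8(N-1)}\EE[Q_N^2] = \tfrac{N}{8}\EE[Q_N^2]$, which is \eqref{Exrff:eq:cNformula}.

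The second step handles the conditional law of $Q_N$. Writing $S_N = \sum_{i=1}^N W_i$ and $T_N = \sum_{i=1}^N W_i^2$, one has $Z_N = \tfrac12(S_N^2 - T_N)$ and $\sum_{j\ge 2} W_1 W_j = W_1(S_N - W_1)$, so
\[
Q_N = \frac{2 W_1 (S_N - W_1)}{S_N^2 - T_N}.
\]
In both cases the law of large numbers gives $S_N/N \to \mu_W$ a.s., and since $W_1$ is tight (it is a single fixed random variable), one expects $Q_N \approx 2W_1/(N\mu_W)$ to leading order; the contributions of $W_1^2$ and $T_N$ in the denominator, and of $W_1$ relative to $S_N$ in the numerator, are lower order corrections. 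Thus $\EE[Q_N^2] \approx \tfrac{4}{N^2\mu_W^2}\EE[W_1^2 \cdot (1 + o(1))]$, and the whole question becomes a moment-convergence problem for $N^2 Q_N^2$.

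For part 1 ($\mu_W^{(2)} = \EE[W^2] < \infty$), the dominated/uniform-integrability argument is routine: $N Q_N \to 2W_1/\mu_W$ in probability, and one shows $\EE[(NQ_N)^2] \to 4\EE[W^2]/\mu_W^2$ by dominating $N^2 Q_N^2$ (using e.g.\ $S_N - W_1 \le S_N$ and a lower bound on $S_N^2 - T_N$ on the event $S_N \ge \tfrac12 N\mu_W$, whose complement is exponentially rare and carries vanishing mass after multiplication by the bounded-below tail) by something integrable. Then $c_N = \tfrac{N}{8}\EE[Q_N^2] \sim \tfrac{N}{8}\cdot\tfrac{1}{N^2}\cdot\tfrac{4\mu_W^{(2)}}{\mu_W^2} = \tfrac{\mu_W^{(2)}}{2\mu_W^2}N^{-1}$, giving \eqref{Exrff:eq:cNKingman}.

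For part 2 (regularly varying tail \eqref{Exrff:eq:tailassumptW1} with $1<\alpha<2$), $\EE[W^2]=\infty$ so the scale changes: now the dominant contribution to $\EE[Q_N^2] = \EE\big[(2W_1(S_N-W_1)/(S_N^2-T_N))^2\big]$ comes from atypically large values of $W_1$, of order up to $O(N)$. Since $\alpha>1$ we still have $S_N/N\to\mu_W$ a.s., so on the bulk event $S_N - W_1 \sim N\mu_W$ (when $W_1 = o(N)$) we get $N^2 Q_N^2 \sim 4W_1^2/\mu_W^2$; but when $W_1$ is comparable to $N\mu_W$ the ratio saturates. The clean way is to write $N^2 Q_N^2 \approx (2W_1/\mu_W)^2$ truncated appropriately and use the regular-variation asymptotics $\EE[(W\wedge M)^2] \sim \tfrac{\alpha}{2-\alpha} c_W M^{2-\alpha}$ as $M\to\infty$, with the effective truncation level $M \asymp N\mu_W/2$ coming from the denominator saturation; the constant $\alpha\Gamma(2-\alpha)\Gamma(\alpha)$ then emerges after an integration (using $\Gamma(2-\alpha)\Gamma(\alpha) = (1-\alpha)\Gamma(1-\alpha)\Gamma(\alpha) = (1-\alpha)\pi/\sin(\pi\alpha)$, or more directly via $B(2-\alpha,\alpha)$). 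I would make this rigorous by conditioning on $W_1$, treating separately $\{W_1 \le \ep N\}$, $\{\ep N < W_1 \le N/\ep\}$ and $\{W_1 > N/\ep\}$, using the a.s.\ behaviour of $\sum_{j\ge 2}W_j$ and $\sum_{j\ge 2}W_j^2$ (the latter is $o(N^2)$ since $\alpha<2$ forces its normalised version to a stable law of index $\alpha/2 < 1$, hence $\sum_{j\ge 2} W_j^2 = O(N^{2/\alpha})$, which is $o(N^2)$), and then letting $\ep\to0$.

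The main obstacle is the uniform-integrability / truncation bookkeeping in part 2: one must control $\EE[Q_N^2]$ precisely enough to extract the exact constant $C^{(\mathrm{Beta})}_\mathrm{pair}$, which requires knowing that (i) the contribution of $W_1$ of order exactly $\Theta(N)$ is asymptotically negligible compared to the $W_1 = o(N)$ range (this uses $\alpha<2$, so that $\EE[(W_1\wedge M)^2]$ is dominated by values well below $M$), (ii) the fluctuations of $S_N$ around $N\mu_W$ and the contribution of $T_N = \sum W_j^2$ do not affect the leading constant, and (iii) the rare events $\{S_N \text{ small}\}$ or $\{Z_N = 0\}$ contribute negligibly. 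Each of these is a standard regular-variation estimate, but assembling them to pin down the constant — rather than just the order $N^{1-\alpha}$ — is where the real work lies. I would organise this as a lemma computing $\lim_{N\to\infty} N^{\alpha-2}\EE[(W_1 \wedge (aN))^2]$ for fixed $a>0$ via Karamata's theorem, then feed it into the conditional decomposition above.
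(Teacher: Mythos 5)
Your derivation of the exact formula \eqref{Exrff:eq:cNformula} and your argument for part~1 are correct and essentially identical to the paper's (binomial conditional law of $V_1$, $c_N=\EE[(V_1)_2]/(8(N-1))$ from Lemma~\ref{lem:cN}, then dominated convergence of $N^2Q_N^2$ to $4W_1^2/\mu_W^2$ after controlling the denominator off an exponentially rare event). Part~2, however, contains a genuine gap in the step that is supposed to produce the constant $C_\mathrm{pair}^{(\mathrm{Beta})}$.

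The problem is the proposed key lemma, $\lim_N N^{\alpha-2}\EE\big[(W_1\wedge (aN))^2\big]$ via Karamata. Writing $M=\mu_W N/2$, the quantity one actually has to evaluate is $M^2\,\EE\big[W_1^2/(W_1+M)^2\big]$ (this is the paper's Lemma~\ref{Exrff:lemWdbWpMmom}), and this is \emph{not} asymptotically equal to $\EE[(W_1\wedge M)^2]$: Karamata gives $\EE[(W\wedge M)^2]\sim \tfrac{2}{2-\alpha}c_W M^{2-\alpha}$, whereas $M^2\EE[W^2/(W+M)^2]\sim c_W\,\alpha\, B(2-\alpha,\alpha)\,M^{2-\alpha}$, and these constants differ (e.g.\ $4$ versus $3\pi/4$ at $\alpha=3/2$). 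The reason they differ is that your claim~(i) is exactly backwards: substituting $x=Mu$ in $\int_0^\infty g'(x)\PP(W\ge x)\,dx$ with $g(x)=x^2/(x+M)^2$ shows the mass of the expectation sits at $u\asymp 1$, i.e.\ at $W_1=\Theta(N)$ — the range where $W^2/(W+M)^2$ and $(W\wedge M)^2/M^2$ disagree by an order-one factor (at $W=M$ they are $1/4$ and $1$). So the truncated-second-moment device pins down the order $N^{1-\alpha}$ but not the constant; to get $\alpha\Gamma(2-\alpha)\Gamma(\alpha)$ you must integrate the exact saturation profile, as the paper does via integration by parts against the tail and the substitution $y=M/(M+x)$, yielding the Beta integral $\alpha\int_0^1(1-y)^{1-\alpha}y^\alpha\,dy\cdot\Gamma(2)/\Gamma(\alpha+1)\cdots$, i.e.\ $\alpha B(2-\alpha,\alpha)$. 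A secondary (repairable) point: the paper's rewriting $Q_N=W_1\big/\big(W_1+\tfrac{(S_{2,N})^2-S^{(2)}_{2,N}}{2S_{2,N}}\big)$ with $S_{2,N}=\sum_{j\ge2}W_j$ keeps $W_1$ independent of the denominator correction, which makes the conditioning and the error control (via Lemma~\ref{Exrff:SNtooextreme}) much cleaner than your form $2W_1(S_N-W_1)/(S_N^2-T_N)$, where $W_1$ also enters $S_N$ and $T_N$.
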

By scaling with the appropriate $c_N$ we obtain the following convergence results.
\begin{proposition}
\label{Exrff:prop1}
1.\ If $\mu_W^{(2)} < \infty$ then $(\xi^{n,N}(c N
t))_{t \ge 0}$  with $c=1/C_\mathrm{pair}^{(\mathrm{Kingm})} $ (cf. \eqref{Exrff:eq:cNKingman}) converges  in the
f.d.d. sense to Kingman's coalescent.
\medskip

\noindent 2.\ If the tails of $W$ vary (strictly) regularly as specified in (\ref{Exrff:eq:tailassumptW1})
 then $(\xi^{n,N}(c N^{\alpha-1} t))_{t \ge 0}$ with $c=1/C_\mathrm{pair}^{(\mathrm{Beta})}$
 (cf.\ \eqref{Exrff:eq:cNBeta})  converges in the
f.d.d. sense  to a $\Xi$-coalescent with $\Xi=\mathrm{Beta}(2-\alpha,\alpha) \circ \varphi^{-1}$ where
$\varphi : [0,1] \to \Delta$ is given by $\varphi(x)=(x/4,x/4,0,0,\dots),$ that is
\begin{align}
\label{Exrff:prop1.e2}
\Xi (dx)= \int_{[0,1]} \delta_{(\frac{x}{4},\frac{x}{4},0,0,\dots)} \, \mathrm{Beta}(2-\alpha, \alpha)(dx)
\end{align}
with the density of the $\mathrm{Beta}(2-\alpha,\alpha)$ distribution given in   \eqref{eq:Betadens}.
\end{proposition}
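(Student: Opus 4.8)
\textbf{Proof proposal for Proposition~\ref{Exrff:prop1}.}
The plan is to verify the hypotheses of Theorem~\ref{thm:res} by checking the equivalent moment condition \eqref{eq:Vfmcond} (or, what amounts to the same, the vague convergence \eqref{eq:PhiNconv}), and then to identify the limiting measure $\Xi'$ and apply the map $\varphi(x_1,x_2,\dots)=(x_1/2,x_1/2,x_2/2,x_2/2,\dots)$ to obtain $\Xi$. Lemma~\ref{Exrff:lem:cN} already supplies the pair-coalescence probability $c_N = \tfrac{N}{8}\EE[Q_N^2]$ and its asymptotics in the two regimes, so the time scaling $1/c_N \sim c N$ (resp.\ $c N^{\alpha-1}$) is fixed from the outset and $c_N \to 0$ holds; it remains to control the higher joint moments of the total offspring numbers $V_1,\dots,V_N$.

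First I would record that, conditionally on $(W_i)$, the vector $(V_i)_{1\le i\le N}$ has the same law as a multinomial allocation of $N$ balls: individual $i$ collects a child with probability $Q_i := \sum_{j\ne i} W_iW_j/Z_N$ (one-dimensional marginals), and more generally the joint falling-factorial moments of $(V_1,\dots,V_j)$ conditionally on $(W_i)$ are $\EE[(V_1)_{k_1}\cdots(V_j)_{k_j}\mid (W_i)] = (N)_{k_1+\cdots+k_j}\, q_{1}^{k_1}\cdots q_{j}^{k_j}$ where $q_i$ is the (conditional) probability that a given child descends from parent $i$; here one must be slightly careful that ``a child of $i$'' and ``a child of $\ell$'' are not independent picks, but since $q_i = Q_i$ for the relevant marginals and the cross terms contribute only lower-order corrections, the leading behaviour is governed by $\EE[(N)_{k_1+\cdots+k_j} Q_1^{k_1}\cdots Q_j^{k_j}]$ with the $Q_i$ built from the i.i.d.\ $W_i$. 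Since $N^{k_1+\cdots+k_j-j} \sim (N)_{k_1+\cdots+k_j}/N^{j}$, the condition \eqref{eq:Vfmcond} reduces to showing that
\[
\phi_j(k_1,\dots,k_j) = \lim_{N\to\infty} \frac{1}{c_N}\,\frac{N^{j}\,\EE[Q_1^{k_1}\cdots Q_j^{k_j}]}{2^{k_1+\cdots+k_j}}
\]
exists. Now $Q_i = W_i(S_N - W_i)/Z_N$ with $S_N=\sum W_\ell$ and $Z_N = \tfrac12(S_N^2 - \sum W_\ell^2)$; by the law of large numbers $S_N \sim \mu_W N$ and $Z_N \sim \tfrac12\mu_W^2 N^2$ on the bulk of the probability space, so $Q_i \approx 2W_i/(\mu_W N)$ when no single $W$ dominates. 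Hence $\EE[Q_1^{k_1}\cdots Q_j^{k_j}] \approx (2/(\mu_W N))^{k_1+\cdots+k_j}\,\EE[W^{k_1}]\cdots\EE[W^{k_j}]$ whenever all these moments are finite, which in case~1 gives $\phi_j = 0$ for $j\ge 2$ and, together with the $c_N$-asymptotics, $\phi_1(2)=2$ and $\phi_1(k)=0$ for $k\ge 3$ — exactly the signature of Kingman's coalescent, i.e.\ $\Xi'=\delta_{\mathbf 0}$. In case~2, for $k\ge 2$ the moment $\EE[W^k]$ is infinite and the expectation $\EE[Q_i^k]$ is instead dominated by the event that $W_i$ is of order $N$; a standard single-big-jump / regular-variation estimate (of the type underlying \cite[Thm.~4]{Schweinsberg2003}) gives $\EE[Q_1^{k}] \sim$ const$\cdot N^{-\alpha}$ determined by the tail constant $c_W$, while for $j\ge 2$ the probability that two of the $W_i$ are simultaneously of order $N$ is negligible, so $\phi_j=0$ for $j\ge 2$. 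This means $\Xi'$ is concentrated on the subset $\{(x_1,0,0,\dots):x_1\in[0,1]\}$ of $\Delta$, i.e.\ $\Xi'$ is (the pushforward to $\Delta$ of) a measure on $[0,1]$; computing $\phi_1(k) = \int_{[0,1]} x^k\,\frac{2\Xi'(dx)}{x^2}$ from \eqref{eq:Vfmrel} and matching with the explicit $N^{-\alpha}$-constant identifies $\Xi'$ on $(0,1]$ with the $\mathrm{Beta}(2-\alpha,\alpha)$ law (and one checks there is no leftover atom at $\mathbf 0$, consistent with $\phi_1(2)=2 = \int x^2\cdot 2x^{-2}\,\mathrm{Beta}(2-\alpha,\alpha)(dx)$).

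Finally, Theorem~\ref{thm:res} gives the limiting $\Xi$-coalescent with $\Xi = \Xi'\circ\varphi^{-1}$. In case~1, $\varphi(\mathbf 0)=\mathbf 0$ so $\Xi=\delta_{\mathbf 0}$ and the limit is Kingman's coalescent, proving part~1. In case~2, $\varphi(x,0,0,\dots) = (x/2,x/2,0,0,\dots)$, so $\Xi$ is $\mathrm{Beta}(2-\alpha,\alpha)$ pushed forward by $x\mapsto(x/2,x/2,0,\dots)$; but one should double-check the normalisation, since the statement \eqref{Exrff:prop1.e2} has weights $x/4$ rather than $x/2$ — the resolution is that the Beta variable appearing as $\Xi'$ here already describes \emph{half-}frequencies relative to the $2N$ chromosomes, or equivalently that $\varphi$ is applied to $\tfrac12 x$; I would reconcile this bookkeeping carefully (it comes down to whether the single dominant family has frequency $x$ among the $N$ parents, hence $x/2$ among $2N$ chromosomes, and is then split by $\varphi$ into two copies of $x/4$), so that \eqref{Exrff:prop1.e2} comes out with the stated $x/4$. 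The main obstacle I anticipate is precisely this last case, part~2: making rigorous the single-big-jump heuristic for $\EE[Q_1^{k}]$ uniformly enough to obtain the exact constant (not merely the order $N^{-\alpha}$), controlling the contribution of the denominators $Z_N$ and $S_N$ on the atypical event, and showing the cross-moments for $j\ge 2$ are genuinely $o(c_N)$; this is the technical heart and is exactly the content deferred to Proposition~\ref{Exrff:prop1}'s proof in Section~\ref{sProofRandomIndFitness}, and it parallels Schweinsberg's tail computations closely enough that his estimates can be adapted.
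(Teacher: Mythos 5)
Your overall strategy --- compute $c_N$, show the offspring-frequency measure concentrates on single-big-family configurations, identify the Beta law, and push forward by $\varphi$ --- is the paper's strategy, and your resolution of the $x/4$ bookkeeping (the dominant family has frequency $\approx x$ among the $N$ children, so $V_{(1)}/2N \approx x/2$, which $\varphi$ then splits into two weights $x/4$) is exactly right. For part~2 the paper takes a slightly different route from your moment computation: rather than evaluating $\phi_1(k)$ for all $k\ge 2$ and matching moments via \eqref{eq:Vfmrel}, it proves only $\phi_2(2,2)=0$ (which by \cite[Cor.~2.1]{Sagitov2003} forces the vague limit in \eqref{eq:PhiNconv} to concentrate on $\{x_2=0\}$) and then verifies \eqref{eq:PhiNconv} directly through the tail asymptotics $\tfrac{N}{c_N}\PP(V_1>Nx)\to 8\int_x^1 y^{-2}\,\mathrm{Beta}(2-\alpha,\alpha)(dy)$ of Lemma~\ref{Exrff:lem:V1tail}; this avoids moment matching but requires a de-conditioning step (binomial large deviations) to pass from the tail of $Q_N$ to the tail of $V_1$. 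Both routes are legitimate since the conditions are equivalent (Lemma~\ref{lem:equiv}).

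Two concrete gaps remain. First, in part~1 you deduce $\phi_1(k)=0$ for $k\ge3$ from $\EE[Q_1^k]\approx(2/(\mu_WN))^k\,\EE[W^k]$ ``whenever all these moments are finite'' --- but case~1 only assumes $\EE[W^2]<\infty$, so $\EE[W^3]$ may be infinite and this step fails as written. The fix (as in Lemma~\ref{Exrff:lem:highmombd},~1.) is to bound $\EE[(V_1)_3]\le N^3\,\EE\big[W_1^3/(W_1+\mu_WN/2)^3\big]+O(N^3e^{-rN})$ and use dominated convergence ($M^2W^3/(W+M)^3\le W^2$ pointwise, tending to $0$) to get $\EE[(V_1)_3]=o(N)$, which suffices because $\phi_1(3)=0$ alone characterises convergence to Kingman's coalescent. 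Second, your dismissal of the cross terms in $\EE[(V_1)_{k_1}\cdots(V_j)_{k_j}]$ as ``lower-order corrections'' is where much of the real work sits: $V_1$ and $V_2$ share the summand $V_{1,2}$, so $(V_1,\dots,V_j)$ is not a multinomial marginal and its joint factorial moments are not simply $(N)_{k_1+\cdots+k_j}Q_1^{k_1}\cdots Q_j^{k_j}$; the paper's proof of $\phi_2(2,2)=0$ proceeds by an explicit twelve-term decomposition of $(V_1)_2(V_2)_2$ followed by term-by-term estimates resting on the large-deviation control of $Z_N$ and of $(S_{k,N})^2-S^{(2)}_{k,N}$ (Lemma~\ref{Exrff:SNtooextreme}). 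You have correctly flagged the control of the denominator $Z_N$ on atypical events as the technical heart, but the non-independence of the $V_i$'s deserves equal billing in that assessment.
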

The proof of Lemma \ref{Exrff:lem:cN} and Proposition \ref{Exrff:prop1} can be found
in Section \ref{sProofRandomIndFitness}.

\subsection{Diploid population model related to supercritical Galton-Watson processes}
\label{section:supercriticalGaltonWatson}
In this section, we consider another diploid version of the model introduced
and studied by Schweinsberg in \cite{Schweinsberg2003} in which an abundance of offspring is produced in each generation
of which only a limited number survives. The model is similar to that of Section
\ref{Section:Exrff} in that large families may be produced. However, in contrast to the
random individual fitness model of the last section, in which \emph{individual parents} may have many offspring
due to an unusual fitness, we here have \emph{parent couples} that may produce a large family.

More concretely, let $X_{i,j}=X_{i,j}^{(N)},1\le i<j\le N,$ be the
``potential offspring'' of parent $i$ and $j$ in any given generation
with a distribution that 
may depend on $N.$
For notational convenience, we set $X_{j,i}^{(N)}=X_{i,j}^{(N)}$ if $j>i$ and $X_{i,i}^{(N)}=0.$
We also denote the number of potential offspring to parent $i$  by
\begin{equation*}
X_i=X_i^{(N)}=\sum_{j=1, \, j\neq i}^{N} X_{i,j}^{(N)}
\end{equation*}
 for any $1\leq i\leq N.$
 Let
\begin{equation}
\label{S_Ndef}
  S_N= \sum_{ 1\le i<j\le N} X_{i,j}^{(N)}
\end{equation}
be the total number of potential offspring.
The actual total population size is always fixed at $N.$ If $S_N\geq
N$, we obtain the next generation by sampling $N$ of these offspring
at random without replacement.  We use $V_{i,j}=V_{i,j}^{(N)}$ to
denote the number of offspring sampled from $X_{i,j}$ for any $1\leq
i<j\leq N.$ Our scaling will be such that there are enough offspring
for resampling with sufficiently high probability so that the details
of (any exchangeable) offspring assignment will not be relevant otherwise.
%
We assume in the following that
\begin{equation}
\label{eq:Xiid}
\(X_{i,j}^{(N)}\)_{1\le i<j\le N} \text{ are i.i.d.}
\end{equation}
so that the potential offspring of parent pairs are generated as in a 
Galton-Watson process.  In addition, we assume that
\begin{equation}
\label{eq:Xijstructure}
\cL\(X_{i,j}^{(N)} \mid X_{i,j}^{(N)}>0\) = \cL(X) \text{ and } p_N := \mathbb{P}\(X_{i,j}^{(N)}>0\) \sim c_{X,1}/N\,\,\text{with $c_{X,1} \in (0,\infty)$}
\end{equation}
where the law of $X$ does not depend on $N$ and satisfies
\begin{equation}
\label{eq:Xmean}
  \mu_X := \EE[X] \in (2/c_{X,1}, \infty).
\end{equation}
Note that
\eqref{eq:Xmean} implies that
\begin{equation}
\label{ES_N}
  \EE[S_N] = \binom{N}{2} p_N \mu_X \mathop{\sim}_{N\to\infty} \mu N \quad \text{with\ \ } \mu := \frac{c_{X,1}}{2} \mu_X > 1 .
\end{equation}
(We will see in Lemma~\ref{lemma:SNbounds} below that this implies that
the event $\{S_N < N\}$ has asymptotically negligible probability in
the scaling regimes we consider.)
Finally, we require one of the following assumptions:
\begin{align}
  \label{ass:Xvariance}
  &  
  \EE\big[X^2\big] < \infty \\
  \intertext{or}
  \label{ass:Xtail}
  & \PP(X>k) \mathop{\sim}_{k\to\infty} c_{X,2} k^{-\alpha} \quad
  \text{for some $\alpha \in (1,2)$ and $c_{X,2} \in (0,\infty)$}.
\end{align}
These assumptions might appear at first sight somewhat artificial, see however the discussion
in Remark~\ref{rem:diploidSchweinbergdiscuss} below.
\smallskip

Before stating the convergence result we again specify the asymptotic behavior
of the scaling parameter $c_N$ given in  (\ref{eq:cN1}).
\begin{lemma}
  \label{lemma:SchweinsbergcN}
  1.\ If \eqref{ass:Xvariance} holds then
  \begin{equation}
    \label{eq:SchweinsbergcN.var}
    c_N \sim \tilde{C}_\mathrm{pair}^{(\mathrm{Kingm})}  N^{-1} \quad \quad \text{as } N \to \infty \quad \text{ with } \;\; \tilde{C}_\mathrm{pair}^{(\mathrm{Kingm})} =\frac12\( \frac{\EE[X(X-1)]}{c_{X,1} \mu_X^2} + 1\).
  \end{equation}
  \smallskip

  \noindent
  2.\ If \eqref{ass:Xtail} holds then
  \begin{equation}
    \label{eq:SchweinsbergcN.tail}
    c_N \sim \tilde{C}_{\mathrm{pair}}^{(\mathrm{Beta})}  N^{1-\alpha}
    \quad \quad \text{as } N \to \infty \quad \text{ with } \;\; \tilde{C}_{\mathrm{pair}}^{(\mathrm{Beta})}
    =\frac{1}{8}\frac{c_{X,1} c_{X,2} \alpha}{\mu^{\alpha}} B\(2-\alpha,\alpha\).
  \end{equation}
\end{lemma}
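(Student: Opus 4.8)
The plan is to use the exact formula $c_N = \frac{1}{8(N-1)}\EE[(V_1)_2]$ from \eqref{eq:cN1} and reduce the asymptotics of $c_N$ to the asymptotics of $\EE[V_1(V_1-1)]$, where $V_1 = \sum_{j\ne 1} V_{1,j}$ is the total number of sampled offspring of individual~$1$. The first step is to control the resampling: by Lemma~\ref{lemma:SNbounds} (which I may assume), $\PP(S_N < N) \to 0$ fast enough, so on the event $\{S_N \ge N\}$ the $N$ surviving offspring are an exchangeable subsample of the $S_N$ potential offspring. Hence, conditionally on $(X_{i,j})$ and on $\{S_N\ge N\}$, the vector $(V_{i,j})$ is hypergeometric, and standard identities give
\begin{align*}
\EE\big[(V_1)_2 \,\big|\, (X_{i,j}),\, S_N\ge N\big] = \frac{(N)_2}{(S_N)_2}\, (X_1)_2
\end{align*}
together with the analogous first-moment relation $\EE[V_1 \mid \cdots] = \frac{N}{S_N} X_1$. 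Since $S_N/N \to \mu$ a.s.\ by the law of large numbers (using \eqref{ES_N} and i.i.d.\ structure \eqref{eq:Xiid}, noting $S_N = \sum_{i<j} X_{i,j}$ is a sum of $\binom{N}{2}$ i.i.d.\ terms with mean $p_N\mu_X \sim c_{X,1}\mu_X/N$), one expects $c_N \sim \frac{1}{8\mu^2 N^2}\EE[(X_1)_2]$ modulo making the replacement of $(S_N)_2$ by $(\mu N)^2$ rigorous.

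The second step is to compute $\EE[(X_1)_2] = \EE[X_1(X_1-1)]$, where $X_1 = \sum_{j=2}^N X_{1,j}$ is a sum of $N-1$ i.i.d.\ copies of $X_{1,j}^{(N)}$, each of which is $0$ with probability $1-p_N$ and distributed as $X$ with probability $p_N$. Expanding the square,
\begin{align*}
\EE[X_1(X_1-1)] = (N-1)\,\EE\big[X_{1,2}(X_{1,2}-1)\big] + (N-1)(N-2)\,\big(\EE[X_{1,2}]\big)^2,
\end{align*}
and $\EE[X_{1,2}] = p_N\mu_X \sim c_{X,1}\mu_X/N$, while $\EE[X_{1,2}(X_{1,2}-1)] = p_N\,\EE[X(X-1)] \sim c_{X,1}\EE[X(X-1)]/N$. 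Under \eqref{ass:Xvariance} this is finite and gives $\EE[X_1(X_1-1)] \sim c_{X,1}\EE[X(X-1)] + c_{X,1}^2\mu_X^2 = c_{X,1}\EE[X(X-1)] + 4\mu^2$, whence
\begin{align*}
c_N \sim \frac{1}{8\mu^2 N^2}\Big(c_{X,1}\EE[X(X-1)] + 4\mu^2\Big)\cdot N = \frac{1}{8N}\Big(\frac{\EE[X(X-1)]}{c_{X,1}\mu_X^2} + 4\Big)\cdot\frac{1}{2}\cdot 2,
\end{align*}
which after simplification (using $\mu = c_{X,1}\mu_X/2$, so $4\mu^2 = c_{X,1}^2\mu_X^2$ and $8\mu^2 = 2c_{X,1}^2\mu_X^2$) yields exactly $c_N \sim \tilde C_{\mathrm{pair}}^{(\mathrm{Kingm})} N^{-1}$ with $\tilde C_{\mathrm{pair}}^{(\mathrm{Kingm})} = \frac12\big(\frac{\EE[X(X-1)]}{c_{X,1}\mu_X^2}+1\big)$. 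Under \eqref{ass:Xtail} instead, $\EE[X(X-1)] = \infty$ and the term $(N-1)\,p_N\,\EE[X(X-1)]$ dominates; I would use the regular-variation tail $\PP(X>k)\sim c_{X,2}k^{-\alpha}$ together with a Karamata/truncation argument to get $\EE[(X\wedge S_N)(X\wedge S_N - 1)] \sim \frac{c_{X,2}\alpha}{2-\alpha} S_N^{2-\alpha}$ on the relevant scale, so that after dividing by $(S_N)_2 \sim (\mu N)^2$ and multiplying by $(N)_2 \sim N^2$ and by $\frac{1}{8(N-1)}\cdot(N-1)\,p_N$, one obtains $c_N \sim \frac18\, c_{X,1} p_N^{-1} N \cdot \frac{c_{X,2}\alpha}{(2-\alpha)(\mu N)^2}\cdot(\mu N)^{2-\alpha}\cdot(\text{Beta normalisation})$; bookkeeping of the constants, using $\frac{\Gamma(2-\alpha)\Gamma(\alpha)}{\Gamma(2)} = \frac{1}{1-\alpha}\cdot(\cdots)$ rewritten as $B(2-\alpha,\alpha)$ and the identity $\alpha B(2-\alpha,\alpha) = \frac{\alpha\,\Gamma(2-\alpha)\Gamma(\alpha)}{\Gamma(2)}$, produces $\tilde C_{\mathrm{pair}}^{(\mathrm{Beta})} = \frac18 \frac{c_{X,1}c_{X,2}\alpha}{\mu^\alpha}B(2-\alpha,\alpha)$.

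The main obstacle is not the algebra but the analytic control in the heavy-tailed case \eqref{ass:Xtail}: here $(X_1)_2$ is not integrable against the true (fixed-$N$) law without truncation, because $X_{1,j}$ can in principle exceed $S_N$, and one must simultaneously (i) justify replacing $(S_N)_2$ in the hypergeometric conditional expectation by its deterministic equivalent $(\mu N)^2$ --- this needs a concentration bound for $S_N$, e.g.\ from Lemma~\ref{lemma:SNbounds}, to dominate the contribution of the atypical event $\{S_N \approx \text{large}\}$ where a single $X_{i,j}$ is of order $N$; and (ii) handle the regime where the large family itself has size comparable to $S_N$, i.e.\ track $\EE[\min(X,S_N)(\min(X,S_N)-1)]$ rather than $\EE[X(X-1)]$. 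The standard route (as in Schweinsberg~\cite{Schweinsberg2003}) is to split on $\{X_{1,j} \le \varepsilon N\}$ versus its complement, use Karamata's theorem on the truncated second moment, and send $\varepsilon\to 0$ after $N\to\infty$; the finite-variance case \eqref{ass:Xvariance} is comparatively routine since dominated convergence applies directly once $S_N/N \to \mu$ and $N p_N \to c_{X,1}$ are in hand.
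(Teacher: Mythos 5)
Your starting point and your treatment of part~1 coincide with the paper's proof: you use $c_N=\EE[(V_1)_2]/(8(N-1))$, the hypergeometric factorial-moment identity $\EE[(V_1)_2\mid (X_{i,j}),\,S_N\ge N]=(N)_2\,(X_1)_2/(S_N)_2$, concentration of $S_N$ from Lemma~\ref{lemma:SNbounds}, and the decomposition $\EE[(X_1)_2]=(N-1)\EE[(X_{1,2})_2]+(N-1)(N-2)\EE[X_{1,2}]^2$; the constant $\tilde C^{(\mathrm{Kingm})}_{\mathrm{pair}}$ then comes out exactly as in the paper. One point you should not gloss over in part~1: for the lower bound you need $\EE[(X_1)_2\ind(S_N\le(1+\varepsilon)\mu N)]\sim\EE[(X_1)_2]$, and since $(X_1)_2$ and $S_N$ are not independent this requires the separate splitting the paper carries out after \eqref{EX_1_2}.

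In part~2 there is a genuine gap in the key analytic step. After writing $S_N=X_1+(S_N-X_1)$ with $S_N-X_1\approx\mu N$, the quantity to evaluate is $\EE[X_1^2/(X_1+\mu N)^2]$, i.e.\ the second moment of $X_1$ weighted by the smooth kernel $x\mapsto x^2/(x+M)^2$ --- \emph{not} the truncated second moment $\EE[(X_1\wedge M)^2]/M^2$. Your proposed route (Karamata applied to $\EE[(X\wedge S_N)^2]\sim\frac{c_{X,2}\alpha}{2-\alpha}S_N^{2-\alpha}$, then division by $(S_N)_2$) produces the constant $\frac{\alpha}{2-\alpha}$, or $\frac{2}{2-\alpha}$ once the mass above the truncation level is added back, and neither equals $\alpha B(2-\alpha,\alpha)=\alpha\Gamma(2-\alpha)\Gamma(\alpha)$: at $\alpha=3/2$ these are $3$, resp.\ $4$, versus $3\pi/4$. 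No ``Beta normalisation'' repairs this; the Beta function only appears when one integrates the tail $\PP(X_1>x)$ against $g'(x)$ for $g(x)=x^2/(x+M)^2$, which after the substitution $y=M/(M+x)$ gives $2\int_0^1(1-y)^{1-\alpha}y^\alpha\,dy=\alpha B(2-\alpha,\alpha)$ --- this is exactly the paper's Lemma~\ref{Exrff:lemWdbWpMmom} with $k=2$, which is what is invoked here. A second omission: the relevant tail is that of the compound sum $X_1=\sum_j A^{(N)}_{1,j}Y^{(N)}_{1,j}$, and one needs $\PP(X_1>x)\sim c_{X,1}c_{X,2}x^{-\alpha}$ \emph{uniformly in $N$} before letting $M=\mu N\to\infty$; this is the content of the paper's Lemma~\ref{lemma:randomsumtails} and does not follow from the single-summand tail alone. (Your displayed bookkeeping, containing the factor $c_{X,1}p_N^{-1}N\sim N^2$, is also off by a power of $N$, but that is symptomatic of the same unfinished computation rather than an independent error.)
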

\medskip
By scaling with $c_N$ we obtain the following convergence result.
\begin{proposition}
\label{prop:diploidSchweinsberg}
\noindent
1.\ If \eqref{ass:Xvariance} holds then $\big(\xi^{n,N}(cNt)\big)_{t\ge 0}$ with $c=1/\tilde{C}_\mathrm{pair}^{(\mathrm{Kingm})}$ (cf. \eqref{eq:SchweinsbergcN.var})  converges in the
f.d.d. sense  to Kingman's coalescent.
\medskip

\noindent
2.\ If \eqref{ass:Xtail} holds then $\big(\xi^{n,N}(cN^{\alpha-1}t)\big)_{t\ge 0}$ with $c=1/\tilde{C}_\mathrm{pair}^{(\mathrm{Beta})}$ (cf. \eqref{eq:SchweinsbergcN.tail}) converges in the
f.d.d. sense  to a Beta-coalescent with simultaneous mergers of four groups. More precisely, the limiting coalescent is a $\Xi$-coalescent with
\begin{align}
\label{eq:diploidSchweinsbergXi4}
\Xi\(dx\)=\int_{(0,1]}\delta_{\(\frac{x}{4},\frac{x}{4},\frac{x}{4},\frac{x}{4},0,0,\ldots\)} \mathrm{Beta}(2-\alpha,\alpha)\(dx\),
\end{align}
where the density of the $\mathrm{Beta}(2-\alpha,\alpha)$ distribution is given in \eqref{eq:Betadens}.
\end{proposition}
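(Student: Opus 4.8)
The plan is to verify the hypotheses of Theorem~\ref{thm:res} for the model of Section~\ref{section:supercriticalGaltonWatson}, i.e.\ to show that the vector $(V_1,\dots,V_N)$ of total offspring numbers satisfies the factorial-moment condition \eqref{eq:Vfmcond} with an explicitly computable limit $\Xi'$, and then to read off $\Xi = \Xi'\circ\varphi^{-1}$ via the map $\varphi(x_1,x_2,\dots)=(x_1/2,x_1/2,x_2/2,x_2/2,\dots)$. Since we already have Lemma~\ref{lemma:SchweinsbergcN} giving the order of $c_N$, the work is entirely in identifying the limits $\phi_j(k_1,\dots,k_j)$; Proposition~\ref{prop:diploidSchweinsberg} then follows from Theorem~\ref{thm:res}. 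First I would dispose of the conditioning: by Lemma~\ref{lemma:SNbounds} (referenced in the excerpt) the event $\{S_N<N\}$ has negligible probability, so up to $o(c_N)$ errors we may work on $\{S_N\ge N\}$ and treat the resampling step as sampling $N$ juveniles without replacement from $S_N$. A standard comparison (as in Schweinsberg~\cite{Schweinsberg2003}) shows that for the purpose of computing the relevant factorial moments one may replace ``sample $N$ without replacement from $S_N$'' by ``each of the $X_{i,j}$ juveniles is retained independently with probability $\approx N/S_N \approx N/(\mu N)=1/\mu$'', since $S_N/N\to\mu$ in probability by \eqref{ES_N} and a law of large numbers for the triangular array of i.i.d.\ $X_{i,j}$.

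Next I would analyze the structure of a large family. The crucial point, flagged in the discussion preceding Section~\ref{Section:Exrff}, is that the dominant contribution to $\EE[(V_1)_{k_1}\cdots(V_j)_{k_j}]$ comes from configurations in which \emph{exactly one} pair $(a,b)$ has $X_{a,b}=\Theta(N)$ and all other potential-offspring counts are $O(1)$. Because $X_{i,j}$ has regularly varying tail \eqref{ass:Xtail} with index $\alpha\in(1,2)$, and there are $\binom N2$ i.i.d.\ copies each nonzero with probability $p_N\sim c_{X,1}/N$, the number of ``active'' pairs is $\Theta(N)$ and the largest among them, rescaled by $N$, converges to a stable-type limit; quantitatively, for $x>0$, $N\cdot\PP(X_{1,2}\ge xN\mid X_{1,2}>0)\cdot p_N \to \tfrac{c_{X,1}c_{X,2}}{\mu^\alpha}(x\mu)^{-\alpha}\cdot$(const) after the $1/\mu$ thinning — this is exactly the computation already carried out in the proof of Lemma~\ref{lemma:SchweinsbergcN}, case~2. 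When the successful pair is $(a,b)$, \emph{both} $V_a$ and $V_b$ receive essentially the same large number $\approx x N$ of offspring (they are all full siblings of this couple), while every other $V_i$ stays $O(1)$. Hence in $\EE[(V_1)_{k_1}\cdots(V_j)_{k_j}]$ the only way to get a non-negligible term is for the $j$ labelled indices to land on genes of the couple: this produces, per realization of the scaled family size $x$, a factor of order $(xN)^{k_1+\cdots+k_j}$ coming from the falling factorials, distributed over the \emph{four} chromosome copies of individuals $a$ and $b$ — and this combinatorial split over four equal weights $x/4$ is precisely what turns $\mathrm{Beta}(2-\alpha,\alpha)$ on $[0,1]$ into the measure $\Xi'(dx)=\int\delta_{(x/4,x/4,x/4,x/4,0,\dots)}\cdots$ before applying $\varphi$; equivalently, in $\Delta$-coordinates $\Xi'\circ\varphi^{-1}$ places mass at $(x/4,x/4,x/4,x/4,0,\dots)$, giving \eqref{eq:diploidSchweinsbergXi4}.

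More precisely, I would compute directly that for each fixed $j$ and $k_1,\dots,k_j\ge2$,
\begin{equation*}
\frac1{c_N}\frac{\EE\big[(V_1)_{k_1}\cdots(V_j)_{k_j}\big]}{N^{k_1+\cdots+k_j-j}\,2^{k_1+\cdots+k_j}}
\longrightarrow
\int_{(0,1]}\sum_{\substack{i_1,\dots,i_j\in\{1,2,3,4\}\\ \text{distinct}}}
\Big(\tfrac{x}{4}\Big)^{k_1}\cdots\Big(\tfrac{x}{4}\Big)^{k_j}\,\frac{2\,\Xi'(dx)}{(x,x)},
\end{equation*}
so that the right-hand side matches \eqref{eq:Vfmrel} with $\Xi'=\mathrm{Beta}(2-\alpha,\alpha)\circ\varphi^{-1}$, where $\varphi(x)=(x/4,x/4,x/4,x/4,0,\dots)$; here $(x,x)=4(x/4)^2=x^2/4$ for the four-point support, and the normalizing constant is pinned down by $\phi_1(2)=2$, which is exactly what the choice $c=1/\tilde C^{(\mathrm{Beta})}_{\mathrm{pair}}$ in Lemma~\ref{lemma:SchweinsbergcN} guarantees. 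The moments are handled by splitting the event into ``one big family'' (contributing the integral above, via dominated convergence using the tail bound on $X$ together with uniform integrability coming from $\alpha>1$ so that $\EE[X]<\infty$ controls sums) plus a remainder (two or more medium families, or the labelled indices spreading across distinct small families) which is $o(c_N)$ by crude first-moment estimates — these are the same estimates, extended to higher moments, that already appear in proving Lemma~\ref{lemma:SchweinsbergcN}.

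The main obstacle I anticipate is the bookkeeping for the thinning/resampling approximation uniformly in all the moments $\phi_j(k_1,\dots,k_j)$ simultaneously: one must show that replacing without-replacement sampling of $N$ from $S_N$ by independent $\mathrm{Bernoulli}(1/\mu)$ thinning introduces only $o(c_N\cdot N^{k_1+\cdots+k_j-j})$ errors in every mixed factorial moment, and that the error from the event $\{S_N<N\}$ and from couples with intermediate offspring size (neither $\Theta(N)$ nor $O(1)$) is genuinely negligible at every moment order — the regularly-varying tail with $\alpha<2$ makes the second factorial moment of a single $X_{i,j}$ already infinite, so all truncation arguments have to be done carefully with the correct cutoff at scale $N$, and the heavy tail means the dominant contribution is delicate to isolate. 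Once those estimates are in place (in parallel to, and reusing, the computations from Section~\ref{Section:Exrff}'s proof and from Schweinsberg~\cite{Schweinsberg2003}), Proposition~\ref{prop:diploidSchweinsberg} follows immediately from Theorem~\ref{thm:res}, with case~1 (finite variance) being the easier parallel computation giving $\phi_j\equiv0$ for $(j,k_1)\ne(1,2)$ and hence $\Xi'=\delta_{\mathbf 0}$, i.e.\ Kingman's coalescent.
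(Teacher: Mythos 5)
Your overall frame---verify the hypotheses of Theorem~\ref{thm:res} and read off $\Xi$---is right, but note that you have chosen the moment route \eqref{eq:Vfmcond}, whereas the paper verifies the ranked-frequency condition \eqref{eq:PhiNconv} directly, by estimating $\PP(V_{(1)}>2Nx_1,\,V_{(2)}>2Nx_2)$ through the events $\{X_{i,j}>N\mu\tfrac{2y}{1-2y},\ S_N-X_{i,j}\le N\mu(1+\varepsilon)\}$ together with hypergeometric fluctuation bounds. The authors explicitly remark that the moment route is possible in principle but ``appears more cumbersome here because the $X_i$ are not independent'': the $X_i=\sum_j X_{i,j}$ share summands across $i$, and the without-replacement sampling couples the $V_i$ further, so the uniform-in-all-moments bookkeeping you flag as your ``main obstacle'' is precisely why the paper avoids this route.

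More importantly, your key displayed formula contains a genuine error: you support $\Xi'$ on the four-point configurations $(x/4,x/4,x/4,x/4,0,\dots)$ and sum over distinct $i_1,\dots,i_j\in\{1,2,3,4\}$. But $\Xi'$ in \eqref{eq:PhiNconv}/\eqref{eq:Vfmrel} governs the rescaled \emph{per-individual} totals $V_{(i)}/(2N)$, and when one couple $(a,b)$ produces $\approx xN$ offspring, exactly \emph{two} individuals have macroscopic offspring numbers, $V_a\approx V_b\approx xN$; hence $\Xi'$ is supported on $(x/2,x/2,0,\dots)$ (as stated at the start of the paper's proof of part~2), and the four weights $x/4$ appear only in $\Xi=\Xi'\circ\varphi^{-1}$ after the chromosome-splitting map $\varphi$ of Theorem~\ref{thm:res}. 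Concretely, your formula predicts $\phi_3,\phi_4\neq 0$, whereas here $\phi_j(k_1,\dots,k_j)=0$ for all $j\ge 3$ (three distinct individuals cannot all have $\Theta(N)$ offspring from one large couple), and it gives $\phi_2(2,2)=\tfrac38\int x^2\,\mathrm{Beta}(2-\alpha,\alpha)(dx)$ instead of the correct $\tfrac12\int x^2\,\mathrm{Beta}(2-\alpha,\alpha)(dx)$. Worse, taking your identification of $\Xi'$ literally and then applying Theorem~\ref{thm:res} would yield a $\Xi$ supported on \emph{eight} atoms of size $x/8$, contradicting \eqref{eq:diploidSchweinsbergXi4}. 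The fix is to carry out the moment computation at the individual level (two weights $x/2$) and let the theorem perform the split into four; your identification of the dominant ``one big couple'' event and your treatment of part~1 are otherwise consistent with the paper.
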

The proof of Lemma \ref{lemma:SchweinsbergcN} and Proposition \ref{prop:diploidSchweinsberg} can be found
in Section \ref{sProofSchweinsberg}.

\begin{remark}
  \label{rem:diploidSchweinbergdiscuss} \rm
  \begin{enumerate}[1.]

  \item The model considered in this section is appropriate for a
    large, unstructured population of $N$ diploid individuals with
    promiscuous reproductive behaviour. It is intended to capture
    situations where there is potentially great variability between
    the number of juveniles produced by different mating couples
    and this is achieved in analogy to Schweinberg's model
    \cite{Schweinsberg2003} in the mathematically simplest way
    by assuming that the $X^{(N)}_{i,j}$ are independent with
    the same distribution.

    At first sight, it may seem then that the structural assumptions
    \eqref{eq:Xijstructure} and \eqref{ass:Xvariance} or
    \eqref{ass:Xtail} are artificial choices just to ``make the
    mathematical theory work''. However, if we stipulate, as seems biologically reasonable, that
    the law $\mathcal{L}(X^{(N)}_1)$ of the number of potential
    offspring of a typical individual (think e.g.\ of the number of
    gametes produced) should be roughly independent of the population
    size $N$ -- in particular, it should not diverge as $N$ grows
    large -- then together with the i.i.d.-assumption for the $X^{(N)}_{i,j}$
    we see that there is essentially no other choice
    than to assume the first half of \eqref{eq:Xijstructure}:
    $\PP(X^{(N)}_{i,j}>0)$ must be of order $1/N$. \eqref{eq:Xijstructure} also means
    that the average number of partners of a typical individual
    stabilises in distribution as $N \to \infty$, in fact, it is
    approximately Poisson distributed with mean $c_{X,1}$.

    From the point of view of a biological model, we
    suggest to read \eqref{eq:Xijstructure} as follows:
    Given a large population of size $N \gg 1$ let
    \[
    c_{X,1} = N \PP(\text{two randomly drawn individuals produce potential offspring together})
    \]
    and let $\mathcal{L}(X)$ be the law of the number of potential offspring produced
    by two randomly drawn individuals, given that they do produce some.
    Then, if $X$ satisfies \eqref{ass:Xvariance} or
    \eqref{ass:Xtail}, the genealogy of an $n$-sample is over time-scales $\propto 1/c_N$
    approximately described by Proposition~\ref{prop:diploidSchweinsberg}.
    See Section~\ref{sect:extensions} for possible extensions.
    \smallskip

    \item
    We do not strive here to answer in full generality the mathematical question
    ``if one only assumes that $X^{(N)}_{i,j}, 1 \le i < j \le N$ are i.i.d.\
    with law $\nu_N$ such that $\sup_N \EE[X^{(N)}_1] = \sup_N (N-1) \EE[X^{(N)}_{1,2}] < \infty$, what are sharp conditions on the family $\nu_N$ of probability measures
    on $\bZ_+$ so that the genealogical processes of finite samples in such population
    models converge?''

    Obviously, then necessarily $\sup_N N \PP(X^{(N)}_{i,j}>0) <
    \infty$ and we see from the proofs of
    Lemma~\ref{lemma:SchweinsbergcN} and
    Proposition~\ref{prop:diploidSchweinsberg} that in the case of infinite
    variance $\EE\big[(X^{(N)}_{i,j})^2\big]=\infty$ suitable control
    of the tail behaviour $\PP(X^{(N)}_{i,j}>x \,|\, X^{(N)}_{i,j}>0)$
    uniformly in $N$ is required for the limit in
    \eqref{proofprop:beta.target1} to exist.  In fact, one can cook up
    examples where $N \PP(X^{(N)}_{i,j}>0)$ and $N^{\alpha-1}
    c_N$ oscillates as a function of $N$ or where even though
    $\lim_{x\to\infty} x^\alpha \PP(X^{(N)}_{i,j}>x \,|\,
    X^{(N)}_{i,j}>0) =: c_{X,2}$ exists for all $N$ one has
    convergence to different coalescents along different subsequences.

  \smallskip

  \item Our parametrisation in \eqref{eq:Xijstructure} enforces
    $\PP(X=0)=0$. If one prefers to allow $0 < \PP(X=0) < 1$, one
    can replace $p_N$ by $p_N \PP(X>0)$ and $X$ by $X'$ where $\PP(X'
    \in \, \cdot) = \PP(X \in \, \cdot \, | \, X>0)$.
    \smallskip

    \smallskip

  \end{enumerate}
\end{remark}

\subsection{Relation to previous work and possible further extensions}\label{sect:ExamplesDiscussion}

\subsubsection{Diploid population model with randomly chosen pairs as couples}
\label{sect:randompairs}
We here recover the convergence result  of \cite[Theorem~4.2 and Corollary~4.3]{MohleSagitov2003} concerning a diploid two-sex population model. In order to make comparisons to \cite{MohleSagitov2003} easier we assume here that the population size is given by $2N.$
 Now, let $\{J_1,\ldots,J_{N}\}$ be a random and randomly ordered partition of $\{1,\ldots,2N\}$ into $N$ subsets of size $2$. This partition describes the grouping of individuals into $N$ distinct couples which give birth to the individuals of the next generation. Let $\widetilde{V}_1,\ldots,\widetilde{V}_{N}$ be a sequence of exchangeable non-negative random variables representing the number of children from each couple respectively with $\widetilde{V}_1+\cdots+\widetilde{V}_{N}=2N$.
  Write $\widetilde{V}_{\(1\)}\geq\widetilde{V}_{\(2\)}\geq\cdots\geq\widetilde{V}_{\(N\)}$ for its ranked version. The offspring distribution $(V_{i,j})_{1 \leq i<j \leq 2N}$ is then given by
\begin{equation*}
V_{i,j}=
\begin{cases}
\widetilde{V}_{\ell},\text{~~if~}\{i,j\}=J_{\ell}\text{~for some }{\ell}\in\{1,2,\ldots,N\},\\
0, \text{~~else.}
\end{cases}
\end{equation*}
It is clear that the $\(V_{i,j}\)_{1 \leq i<j \leq 2N}$ are
exchangeable as in (\ref{eq:exchangeable}).  Note that the
corresponding total offspring size vector $(V_1, \dots, V_{2N})$ is a random
permutation of $(\widetilde{V}_1, \widetilde{V}_1, \dots,
\widetilde{V}_{N}, \widetilde{V}_{N}).$ In particular, $V_1$ and
$\widetilde{V}_1$ have the same distribution. Thus, we obtain from
\eqref{eq:cN1} that (remember that we use population size $2N$ here)
$$c_{2N}=\frac{\mathbb{E} [(V_1)_2 ]}{8\(2N-1\)}=\frac{\mathbb{E} [(\widetilde{V}_1)_2 ]}{8\(2N-1\)}.$$
Let us remark that this model can also be interpreted as a two-sex
model, which is the formulation used in \cite{MohleSagitov2003}. Here,
in each generation, we randomly assign sexes to the offspring such
that there are $N$ male and $N$ female offspring. Subsequently, the
couples are formed at random between the males and the females.
%
%
We have the following proposition:
\begin{proposition}[M\"ohle \& Sagitov\ \cite{MohleSagitov2003}]\label{propn:tildeV}
Assume that $c_{2N}\rightarrow0$ as $N\rightarrow\infty$ and that
\begin{equation}\label{eq:Vitildecon}
\frac{1}{4 c_{2N}}\mathscr{L}\(\frac{\widetilde{V}_{\(1\)}}{2N},\frac{\widetilde{V}_{\(2\)}}{2N},\ldots,\frac{\widetilde{V}_{\(N\)}}{2N}\)
\mathop{\longrightarrow}_{N\to\infty}
\frac{1}{{\(x,x\)}}\Xi''\(dx\)\text{~vaguely on }\Delta\setminus\{\mathbf{0}\}
\end{equation}
where $\Xi''$ is a probability measure on $\Delta$.
Then there is convergence in the  f.d.d. sense to a $\Xi$-coalescent with $\Xi = \Xi'' \circ \varphi^{-1} \circ \varphi^{-1} = \Xi'' \circ \widetilde{\varphi}^{-1}$ (recall the function $\varphi$ from Thm.\ \ref{thm:res})
with
\begin{equation*}
\widetilde{\varphi}: \Delta \rightarrow \Delta \text{ given by }\widetilde{\varphi}(x_1, x_2, \dots) =(x_1/4,x_1/4,x_1/4,x_1/4,x_2/4,x_2/4,x_2/4,x_2/4,\dots).
\end{equation*}
\end{proposition}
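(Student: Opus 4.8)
The plan is to deduce Proposition~\ref{propn:tildeV} directly from Theorem~\ref{thm:res}, applied to the population of size $2N$ with the offspring array $(V_{i,j})_{1\le i<j\le 2N}$. Exchangeability of this array is clear and $c_{2N}\to 0$ is assumed, so the only point that needs work is to show that hypothesis~\eqref{eq:Vitildecon} on the ranked couple-offspring numbers implies hypothesis~\eqref{eq:PhiNconv} for the ranked total offspring frequencies $\Phi_{2N}$; the form of $\Xi$ will then drop out of Theorem~\ref{thm:res} after one further composition with $\varphi$.

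The starting point is the structural observation already recorded before the statement: since $V_{i,j}=\widetilde V_\ell$ exactly when $\{i,j\}=J_\ell$ and $V_{i,j}=0$ otherwise, each couple's offspring count is carried by \emph{both} of its two members, so $(V_1,\dots,V_{2N})$ is a uniformly random permutation of $(\widetilde V_1,\widetilde V_1,\widetilde V_2,\widetilde V_2,\dots,\widetilde V_N,\widetilde V_N)$. Hence the ranked vector is $(V_{(1)},\dots,V_{(2N)})=(\widetilde V_{(1)},\widetilde V_{(1)},\dots,\widetilde V_{(N)},\widetilde V_{(N)})$, and if we write $\widetilde\Phi_N:=\mathscr{L}(\widetilde V_{(1)}/(2N),\dots,\widetilde V_{(N)}/(2N),0,0,\dots)$ for the law of the ranked couple frequencies — a probability measure on $\Delta$ because $\sum_i\widetilde V_i=2N$ — one reads off the clean identity
\[
  \Phi_{2N}=\widetilde\Phi_N\circ\varphi^{-1}
\]
with $\varphi$ the halving--doubling map from Theorem~\ref{thm:res}.

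The next step is to transport the assumed vague convergence~\eqref{eq:Vitildecon} through $\varphi$. The map $\varphi$ is continuous for the coordinatewise topology and, restricted to $\Delta\setminus\{\mathbf 0\}$, it is proper, since $\varphi^{-1}(\{x_1\ge\epsilon\})=\{x_1\ge 2\epsilon\}$; thus test functions pull back to test functions and pushing measures forward by $\varphi$ is continuous for vague convergence on $\Delta\setminus\{\mathbf 0\}$. One also has to track the weights: $(\varphi(x),\varphi(x))=\tfrac12(x,x)$, so a one-line change of variables shows that $\varphi$ pushes $\tfrac{2}{(x,x)}\Xi''(dx)$ forward to $\tfrac{1}{(y,y)}(\Xi''\circ\varphi^{-1})(dy)$ on $\Delta\setminus\{\mathbf 0\}$. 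Combining this with \eqref{eq:Vitildecon} multiplied by $2$ (which turns the normalisation $1/(4c_{2N})$ into $1/(2c_{2N})$) and with the identity $\Phi_{2N}=\widetilde\Phi_N\circ\varphi^{-1}$ yields
\[
  \frac{1}{2c_{2N}}\,\Phi_{2N}\;\longrightarrow\;\frac{1}{(x,x)}\,\Xi'(dx)\quad\text{vaguely on }\Delta\setminus\{\mathbf 0\},\qquad\text{with }\Xi':=\Xi''\circ\varphi^{-1},
\]
which is precisely condition~\eqref{eq:PhiNconv} for population size $2N$; here $\Xi'$ is a probability measure on $\Delta$, the mass possibly escaping to $\mathbf 0$ being accounted for consistently because $\varphi(\mathbf 0)=\mathbf 0$ and $\varphi$ preserves total mass. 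Theorem~\ref{thm:res}, applied with ``$N$'' replaced by ``$2N$'', then gives f.d.d.\ convergence to an $n$-$\Xi$-coalescent with $\Xi=\Xi'\circ\varphi^{-1}=\Xi''\circ\varphi^{-1}\circ\varphi^{-1}=\Xi''\circ\widetilde\varphi^{-1}$, since $\widetilde\varphi=\varphi\circ\varphi$.

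The step I expect to demand the most care is this interchange of the vague limit with both the pushforward by $\varphi$ and the change of normalisation: one must verify properness of $\varphi$ away from $\mathbf 0$ (so that test functions pull back to test functions) and bookkeep the mass at $\mathbf 0$. An alternative that sidesteps this is to check the factorial-moment condition~\eqref{eq:Vfmcond} directly, by rewriting $\EE[(V_1)_{k_1}\cdots(V_j)_{k_j}]$ in terms of the joint factorial moments of $(\widetilde V_1,\dots,\widetilde V_N)$ after conditioning on the random pairing $\{J_1,\dots,J_N\}$; this, however, forces a sum over the partitions of $\{1,\dots,j\}$ recording which sampled indices fall on a common couple, so the combinatorics is heavier than the pushforward argument above.
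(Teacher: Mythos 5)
Your proposal is correct and follows essentially the same route as the paper: both reduce the claim to verifying condition~\eqref{eq:PhiNconv} for the size-$2N$ population by identifying $\Phi_{2N}$ as the pushforward of the ranked couple-frequency law under $\varphi$, and both use $(\varphi(x),\varphi(x))=\tfrac12(x,x)$ to convert the $1/(4c_{2N})$ normalisation into the required $1/(2c_{2N})$ one before invoking Theorem~\ref{thm:res}. The only cosmetic difference is that the paper writes the transport of the vague limit out directly against compactly supported test functions, where you package it as continuity of the pushforward via properness of $\varphi$ away from $\mathbf{0}$.
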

\begin{remark}
\rm
We note that \eqref{eq:Vitildecon} can be equivalently formulated in terms of moment conditions as in \eqref{eq:Vfmcond}
  or as in Appendix~\ref{app:weakconvcrit}.
  \end{remark}
\begin{proof}
We use Theorem\ \ref{thm:res} with condition (\ref{eq:PhiNconv}), which is in the present context of the form
\begin{align}
\label{eq:PhiNconv-twosex}
\frac{1}{2 c_{2N}} \mathscr{L} \( {\textstyle \frac{ \widetilde{V}_{(1)}}{4N}, \frac{ \widetilde{V}_{(1)}}{4N}, \frac{ \widetilde{V}_{(2)}}{4N},
\frac{ \widetilde{V}_{(2)}}{4N}, \dots, \frac{ \widetilde{V}_{(N)}}{4N} , \frac{ \widetilde{V}_{(N)}}{4N} ,0, 0, \dots} \)
\mathop{\longrightarrow}_{N\to\infty}
\frac{1}{(x,x)} \Xi'(dx)
\end{align}
vaguely on $\Delta \setminus \{ (0,0,\dots) \}$ for a certain probability measure $\Xi'$ on $\Delta$.
Note that the measure on the left-hand side of \eqref{eq:PhiNconv-twosex}
equals
\[
\frac12 \frac{1}{c_{2N}} \mathscr{L} \Big( \varphi\big( \tfrac{\widetilde{V}_{(1)}}{2N}, \tfrac{\widetilde{V}_{(2)}}{2N}, \dots,
\tfrac{\widetilde{V}_{(N)}}{2N} \big)\Big)
\]
and that \eqref{eq:Vitildecon} means that for every continuous function $f$ with compact support in $\Delta \setminus\{\mathbf{0}\}$ 
\[
\frac1{c_{2N}} \EE\left[ f\(\varphi\( \tfrac{\widetilde{V}_{(1)}}{2N}, \tfrac{\widetilde{V}_{(2)}}{2N}, \dots,
\tfrac{\widetilde{V}_{(N)}}{2N} \)\) \right] \mathop{\longrightarrow}_{N\to\infty}
4 \int_\Delta f\(\varphi(y)\) \frac1{(y,y)} \, \Xi''(dy)
= 2 \int_\Delta f(x) \frac1{\(x,x\)} \, \( \Xi'' \circ \varphi^{-1}\) (dx)
\]
(observe $\(\varphi(y),\varphi(y)\) = \frac12 \(y,y\)$).
Thus \eqref{eq:PhiNconv-twosex} is equivalent to \eqref{eq:Vitildecon} with
$\Xi' = \Xi''\circ \varphi^{-1}$ and the result follows.
\end{proof}
Note that we see clearly from the form of $\Xi = \Xi'' \circ \widetilde{\varphi}^{-1}$ that the mass of each large family is split into four equal parts, representing the four chromosomes from a particular couple.



\subsubsection{A diploid population model with occasional large families}

\label{sect:Exampleonelargefamily}
Here, we briefly discuss how the class of continuous-time diploid population models from \cite{BBE13},
which involve suitably rare but ``large'' reproduction events with a single large family, can be
formulated in our present discrete-time context and thus the ``single-locus'' analogues of Theorems~1.2 and 1.3
there can be obtained from our main result.

The children's generation arises as follows:
Randomly choose two distinct individuals $\{I_1,I_2\}$ from $\mathbb{V}_{N}=\{1,2,\ldots,N\}$. Individuals $I_1$ and $I_2$ form
a couple (as in \cite{MohleSagitov2003}) and have a random number $\Psi_N$ of children together but with no-one else, i.e. $V_{I_1,I_2}=\Psi_N$,
$V_{I_i,j}=0$ for $j \neq I_{3-i}$, $i=1,2$. The other $N-2$ individuals in $\mathbb{V}_{N}\setminus\{I_1,I_2\}$ give birth to $N-\Psi_N$ children according to the diploid Wright-Fisher model.
We have (see 
proof of Lemma~\ref{lem:cN})
\begin{equation}
\label{eq:exampleMSBBE.cN}
\begin{split}
c_N=&\mathbb{E}\bigg[\frac{\Psi_N\(\Psi_N-1\)}{N\(N-1\)}\frac14+\sum_{\substack{i<j\text{ in } \mathbb{V}_N\setminus\{I_1,I_2\} }}\frac{V_{i,j}\(V_{i,j}-1\)}{N\(N-1\)}\frac14+\sum_{\substack{i,j,k\text{ in } \mathbb{V}_N\setminus\{I_1,I_2\} \\ \text{pairwise distinct} }}\frac{V_{i,j}V_{i,k}}{N\(N-1\)}\frac18\bigg]\\
=&\frac{\mathbb{E}\big[\Psi_N\(\Psi_N-1\)\big]}{4N\(N-1\)}+O\({N^{-1}}\).
\end{split}
\end{equation}
The limiting behaviour depends on the sequence of laws $\mathcal {L}\(\Psi_N\)$, $N\in\mathbb{N}$:
\smallskip
\begin{enumerate}[1.]
\item A simple choice, inspired by \cite{EW06}, is to assume that the ``large'' family constitutes always a fixed
fraction of the total population:
Given $\psi\in\(0,1\)$, we assume
$$\mathbb{P}\(\Psi_N=\lfloor\psi N\rfloor\)=1-\mathbb{P}\(\Psi_N=1\)=N^{-\gamma},$$
where $\gamma > 0$. Then
\begin{enumerate}[I.]
\item If $\gamma\in(0,1)$, we have $c_N\sim\frac{\psi^2}{4}N^{-\gamma}$ and (\ref{eq:PhiNconv}) holds true with
$\Xi^{'}=\delta_{\(\frac{\psi}{2},\frac{\psi}{2},0,0,\ldots\)}$. Thus Theorem \ref{thm:res} yields that
the scaled ancestral process converges to a $\Xi$-coalescent process with $\Xi=\delta_{\(\frac{\psi}{4},\frac{\psi}{4},\frac{\psi}{4},\frac{\psi}{4},0,0,\ldots\)}$.\\
\item If $\gamma=1$, we have $c_N\sim\frac{\psi^2}{4N}+\frac{1}{2N}$ and (\ref{eq:PhiNconv}) holds true with
$\Xi^{'}=\frac{\psi^2}{\psi^2+2}\delta_{\(\frac{\psi}{2},\frac{\psi}{2},0,0,\ldots\)}+\frac{2}{\psi^2+2}\delta_{\mathbf{0}}$.
Thus, Theorem \ref{thm:res} shows that
the scaled ancestral process converges to a $\Xi$-coalescent process with $\Xi=\frac{\psi^2}{\psi^2+2}\delta_{\(\frac{\psi}{4},\frac{\psi}{4},\frac{\psi}{4},\frac{\psi}{4},0,0,\ldots\)}+\frac{2}{\psi^2+2}\delta_{\mathbf{0}}$. \\
\item If $\gamma>1$ the limit process is Kingman's coalescent.
\end{enumerate}

We note that alternatively, we could assume that with probability
$1-N^{-\gamma}$ there is just a Wright-Fisher reproduction step and
with probability $N^{-\gamma}$ a reproduction step with one
exceptionally fertile couple as above occurs. This yields the same
limit process.
\medskip

\item
More generally, we can assume that the sequence of laws $\mathcal {L}\(\Psi_N\)$, $N\in\mathbb{N}$ satisfies
$c_N\rightarrow0\text{ as }N\rightarrow\infty$ with $c_N$ from \eqref{eq:exampleMSBBE.cN}
and there exists a 
probability measure $F$ on $[0,1]$ such that
$$\frac{1}{2 c_N}\mathbb{P}\(\Psi_N>Nx\)\longrightarrow\int_x^1\frac{1}{y^2}F\(dy\)\text{ as }N\rightarrow\infty\text{~for any $x \in (0,1)$ where $F$ is continuous}.$$
In this case, (\ref{eq:PhiNconv}) holds true with
$\Xi^{'}=\int_{[0,1]}\delta_{\(\frac{y}{2},\frac{y}{2},0,0,\ldots\)} F\(dy\)$
and Theorem \ref{thm:res} yields that the scaled ancestral process converges to a $\Xi$-coalescent process with
$\Xi=\int_{[0,1]}\delta_{\(\frac{y}{4},\frac{y}{4},\frac{y}{4},\frac{y}{4},0,0,\ldots\)} F\(dy\)$.
\medskip

\item One can easily generalise this model to any number $k$ of
``large'' families. Initially, we randomly choose $k$ couples from
$\mathbb{V}_N$. The number of children of those $k$ couples are
{$\Psi_{N,1}$, $\Psi_{N,2}$,$\ldots$,$\Psi_{N,k}$ with
$\Psi_{N,i}=\lfloor\psi_i N\rfloor$} and suitable $\psi_i\in\(0,1\)$
for any $i=1,2,\ldots,k$. Assume $c_N$ converges to $0$, then the
scaled ancestral process converges to a $\Xi$-coalescent process with\\
$\Xi=\delta_{\(\frac{\psi_1}{4},\frac{\psi_1}{4},\frac{\psi_1}{4},\frac{\psi_1}{4},\ldots,\frac{\psi_k}{4},\frac{\psi_k}{4},\frac{\psi_k}{4},\frac{\psi_k}{4},0,0,\ldots\)}$.
\end{enumerate}

\subsubsection{Further remarks and possible extensions}
\label{sect:extensions}

The matrix of offspring numbers $(V_{i,j})_{1
  \le i < j \le N}$ can equivalently be viewed as a (n exchangeable) random
multigraph on $N$ nodes, by drawing $V_{i,j}$ undirected edges (one
for each child) between nodes $i$ and $j$. For example, we can
interpret the model from
Section~\ref{section:supercriticalGaltonWatson} as a variation on
Erd\H{o}s-R\'enyi graphs (remembering \eqref{eq:Xijstructure}): First
draw an Erd\H{o}s-R\'enyi graph on $N$ nodes with edge probability
$c_{X,1}/N$, then replace the $i$-th of the resulting $M \approx
c_{X,1}N/2$ edges by $\nu_{i,M}$ edges, where
$(\nu_{1,M},\dots,\nu_{M,M})$ are drawn as in Schweinsberg\
\cite[Section~1.3]{Schweinsberg2003} (except that we enforce
$\nu_{1,M}+\dots+\nu_{M,M} = N$, not $=M$).

From the point of view of biological modelling, one might feel that
the set-up in Section~\ref{section:supercriticalGaltonWatson}, which
in particular enforces that the number of reproductive partners of a
typical individual is essentially Poisson distributed (and hence the
number of potential offspring has a compound Poisson distribution), is
somewhat restrictive. A natural generalisation of
\eqref{eq:Xijstructure} would be the following: Let $D_i$ be independent
copies of an $\NN_0$-valued random variable $D$ with $\EE[e^{\lambda
  D}]<\infty$ for $\lambda$ in a neighbourhood of $0$; we think of $D_i$ as the
number of reproductive partners of individual $i$. Use the
configuration model to assign reproductive partners, i.e.\ attach
$D_i$ ``half-edges'' to node $i$ and then randomly match all
half-edges conditional on producing no self-loops (if $D_1+\cdots+D_N$
is odd, throw away the last half-edge, say).  Finally replace each
resulting edge $e$ by a random number $X_e$ of edges where $X_e$
are independent\ copies of $X$. If $\EE[X] \EE[D] > 2$ and $X$ satisfies
\eqref{ass:Xvariance} or \eqref{ass:Xtail}, then a suitable analogue
of Proposition~\ref{prop:diploidSchweinsberg} will hold.  We do not go
into detail here but note that by Theorem~\ref{thm:res},
asymptotically for our study of genealogies, only the joint law of
$(V_i)_{1\le i \le N}$, which in the language of random graphs
corresponds to the empirical degree distribution, is important. In the
extension of the model from
Section~\ref{section:supercriticalGaltonWatson} just sketched, this
will again on the relevant time-scales be dominated by one
exceptionally large value of $X_e$ if \eqref{ass:Xtail} holds and
negligible compared to $N$ if \eqref{ass:Xvariance} holds.
See e.g.\ \cite{vdhofstad} for background on random graphs, which is currently a
very active research topic.
\medskip

Obviously, these models allow various generalisations where the
``degree of promiscuity'' can be chosen as a parameter:
One could for example assign $a\%$ of the children to fixed couples as
in the model from Section~\ref{sect:randompairs} and the
remaining $(100-a)\%$ of the children by using a ``configuration model''
as just discussed.
\medskip

In most of the models discussed so far we did not include individuals
of different sexes. However, as described in Section \ref{sect:randompairs}
 two-sex models, possibly with unequal sex ratio $r : 1-r$,
can be in principle easily embedded into our set-up: take a random
bi-partite ``exchangeable'' multigraph on $\lfloor r N \rfloor$ and $N
- \lfloor r N \rfloor$ nodes with $N$ edges (equivalently: a
separately exchangeable $\lfloor r N \rfloor \times (N - \lfloor r N
\rfloor)$-matrix, with values in $\bN_0$, summing to $N$), assign
individuals $i=1,\dots,N$ randomly to the two ``sex groups''. One can
combine this with small variations of all the models discussed in
Section~\ref{sect:Examples} and one can for example also incorporate
differences in the variance of reproductive success between the two
sexes in this class of models.
\medskip

One can also allow the possibility of selfing, i.e.\ $\PP(V_{i,i}>0)>0$.
Then complete dispersion will not happen (asymptotically) immediately but
only after a certain random number of sampled genes have merged due to selfing,
analogous to \cite{Mohle98a}.
We leave the details to future work.

\section{Proofs}
\label{Section:Proofs}

\subsection{Proof of Theorem~\ref{thm:res} and Corollary~\ref{cor:res}}\label{s3}

In this section we prove our main convergence result Theorem ~\ref{thm:res} and Corollary~\ref{cor:res}.
\subsubsection{The pair coalescence probability}
We start by analyzing the pair coalescence probability $c_N.$  Recall that this is the
probability that two genes (picked at random) from
two distinct individuals, which are chosen randomly without
replacement from the same generation (in the population of size $N$),
have a common ancestor (gene) in the previous generation.

\begin{lemma} We have
\label{lem:cN}
\begin{equation}
  \label{lem:cN.eq}
  c_N = \frac18 \EE\left[V_{1,2}^2-\tfrac{2}{N-1}\right] +
  \frac{N-2}{8}\EE\left[V_{1,2}V_{1,3}\right]  =  \frac{1}{8} \EE\left[  V_{1,2} \(V_1-1\)\right]
  = \frac{1}{8}\frac{1}{N-1} \EE\left[ \(V_1\)_2\right].
\end{equation}
\end{lemma}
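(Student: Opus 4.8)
The plan is to compute $c_N$ by conditioning on the pairwise offspring array $V=(V_{i,j})$ and decomposing according to the pedigree relationship of the two sampled genes. Sample an ordered pair $(A,B)$ of distinct individuals uniformly from the size-$N$ current generation. Since exactly $V_{i,j}$ of the $N$ children descend from couple $\{i,j\}$, the probability that $A$ descends from $\{i,j\}$ equals $V_{i,j}/N$, and given that, the probability that $B$ descends from $\{k,l\}$ equals $V_{k,l}/(N-1)$ if $\{k,l\}\neq\{i,j\}$ and $(V_{i,j}-1)/(N-1)$ otherwise. Because $V_{i,i}=0$, either $A$ and $B$ are full siblings (same couple), half siblings (their couples share exactly one parent), or unrelated (their couples are disjoint), and in the last case the two sampled genes cannot coalesce in one step.

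Next I evaluate the one-step coalescence probability in each case using the inheritance rule: a uniformly chosen gene of a child of couple $\{i,j\}$ has as parental gene a uniformly chosen one of the four gene copies of $i$ and $j$, and these picks are independent over distinct children. Hence for full siblings the two parental genes are i.i.d.\ uniform on one common $4$-element set, so they agree with probability $4\cdot(1/4)^2=1/4$; for half siblings they are independent uniform on two $4$-element sets overlapping in exactly $2$ elements (the two gene copies of the shared parent), so they agree with probability $2\cdot(1/4)(1/4)=1/8$.

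Combining these, and writing the half-sibling event as a disjoint union over triples (shared parent $i$, the other parent of $A$'s couple $j$, the other parent of $B$'s couple $l$) with $i,j,l$ pairwise distinct, I obtain
\[
c_N = \frac14\,\EE\Bigl[\sum_{1\le i<j\le N}\frac{V_{i,j}(V_{i,j}-1)}{N(N-1)}\Bigr]
+ \frac18\,\EE\Bigl[\sum_{i,j,l\ \mathrm{distinct}}\frac{V_{i,j}V_{i,l}}{N(N-1)}\Bigr].
\]
Now I invoke exchangeability \eqref{eq:exchangeable}: $\EE[V_{i,j}(V_{i,j}-1)]=\EE[V_{1,2}^2]-\EE[V_{1,2}]=\EE[V_{1,2}^2-\tfrac2{N-1}]$ (using $\EE[V_{1,2}]=N/\binom N2=\tfrac2{N-1}$) and $\EE[V_{i,j}V_{i,l}]=\EE[V_{1,2}V_{1,3}]$; counting the $\binom N2$ couples and the $N(N-1)(N-2)$ ordered triples of distinct indices collapses the display into the first claimed expression for $c_N$.

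The remaining two equalities are then pure algebra with $V_1=\sum_{j=2}^N V_{1,j}$. Expanding $V_{1,2}(V_1-1)=V_{1,2}(V_{1,2}-1)+\sum_{j=3}^N V_{1,2}V_{1,j}$ and taking expectations (using exchangeability in the second index) gives $\EE[V_{1,2}(V_1-1)]=\EE[V_{1,2}(V_{1,2}-1)]+(N-2)\EE[V_{1,2}V_{1,3}]$, whence $c_N=\tfrac18\EE[V_{1,2}(V_1-1)]$; and since $\sum_{j=2}^N V_{1,j}(V_1-1)=V_1(V_1-1)$ with all $N-1$ summands equal in expectation, $(N-1)\EE[V_{1,2}(V_1-1)]=\EE[(V_1)_2]$, giving the last form $\tfrac1{8(N-1)}\EE[(V_1)_2]$. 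The only genuinely delicate point is the bookkeeping in the first two paragraphs — getting the pedigree case distinction, the identification of the half-sibling event with triples of distinct indices, and the constants $1/4$ and $1/8$ exactly right; once those are fixed, everything else is routine exchangeable-moment manipulation.
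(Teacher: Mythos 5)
Your proof is correct and follows essentially the same route as the paper: both decompose the event according to whether the two sampled individuals are full siblings, half siblings, or unrelated, assign the coalescence probabilities $1/4$ and $1/8$ to the first two cases, and then reduce to the stated expressions via exchangeability. The only cosmetic difference is that the paper passes through $\tfrac18\EE[V_{1,2}(V_1+V_2-2)]$ to reach $\tfrac18\EE[V_{1,2}(V_1-1)]$ first and derives the other two forms from it, whereas you land on the first displayed expression directly; the algebra is equivalent.
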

\begin{proof}
  Pick two distinct individuals at random from the current population
  and pick from each of them independently one of the two gene copies
  at random by a fair coin flip.  These two genes may be descended
  from the same ancestral gene in the previous generation if the two
  individuals have both parents or just one parent in common (full
  siblings, half siblings).  The probabilities that the two genes are
  descended from the same ancestral gene of one of the parent
  individuals is then $\frac{1}{4}$ and $\frac{1}{8}$ respectively.
  Thus
\begin{align*}
c_N & = \EE{\left[ \sum_{1 \le i < j \le N}  \left[ \frac{V_{i,j} (V_{i,j}-1)}{N(N-1)} \frac14 +
\frac{V_{i,j} (V_i-V_{i,j})}{N(N-1)} \frac18 +
\frac{V_{i,j} (V_j-V_{i,j})}{N(N-1)} \frac18 \right] \right] }\\
& = \frac18 \frac{1}{N(N-1)} \sum_{1 \le i < j \le N}  \EE\left[ V_{i,j} \(V_i + V_j - 2\) \right]
= \frac18 \frac{\binom{N}{2}}{N(N-1)} \EE\left[ V_{1,2} \(V_1 + V_2 - 2\) \right] \\
& = \frac18 \EE\left[ V_{1,2} (V_1-1) \right]
= \frac1{8(N-1)} \EE\left[ \sum_{j=2}^N V_{1,j} \(V_1-1\) \right] = \frac{1}{8}\frac{1}{N-1} \EE\left[ \(V_1\)_2\right]
\end{align*}
by the exchangeability assumptions\ \eqref{eq:exchangeable}.
Alternatively write
\begin{align*}
  \frac18 \EE\big[ V_{1,2} (V_1-1) \big] &= \frac18 \sum_{j=2}^N \EE\big[ V_{1,2} V_{1,j} \big] - \frac18 \EE\big[ V_{1,2} \big]
  =  \frac18 \EE\big[V_{1,2}^2\big] - \frac18 \frac{2}{N-1} +
  \frac{N-2}{8}\EE\big[V_{1,2}V_{1,3}\big]
\end{align*}
to obtain the first equality in \eqref{lem:cN.eq}.
\end{proof}
Note that one can also express $c_N$ in terms of variance and covariances of $V_{i,j}$, as follows:
\begin{align*}
c_N = \frac18 \mathrm{Var}[V_{1,2}] + \frac18 (N-2)
\mathrm{Cov}[V_{1,2},V_{1,3}] + \frac1{4\(N-1\)}.
\end{align*}

\medskip
\noindent
\subsubsection{Transition probabilities}
Next, we analyze the transition probabilities of the ancestral process.
Let $\Pi_{n,N} =\(\pi_{n,N}({\xi,\eta})\)_{\xi,\eta\in\mathcal{S}_n}$ be the
transition matrix of the Markov chain
$\(\xi^{n,N}\(m\)\)_{m\in\mathbb{N}_0}$. 
($\Pi_{n,N}$ can be viewed as an $|\mathcal{S}_n| \times
|\mathcal{S}_n|$-matrix if we fix an order for $\mathcal{S}_n$, which we will do later).
In particular, we see from the argument in Lemma~\ref{lem:cN} that
$c_N = \pi_{2,N}(\{\{1\},\{2\}\}, \{ \{1,2\} \})$.
\smallskip

It turns out that for our purposes it is
sufficient to describe the $\pi_{n,N}(\xi,\eta)$ in case that $\xi \in \mathcal{E}_n.$
 For some $b\leq n$ we thus consider the transition probability from states $\xi \in \mathcal{E}_n$ to
$\eta \in \mathcal{S}_n$ of the form
 \begin{eqnarray}
 \label{xi}
 \xi&=&\left\{C_1,\ldots, C_b\right\}\quad \text{and} \\
\label{eta}
\eta&=&\left\{\left\{D_1,D_2\right\},\ldots,\left\{D_{2d-1},D_{2d}\right\}, D_{2d+1}, \ldots, D_a\right\},
\end{eqnarray}
for some $a\leq b$ and $2d \leq a$ such that $\xi \subseteq \mathsf{cd}(\eta).$
Assume that $D_i$ is a union of $k_i \ge 1$ classes from $\xi$ with
$k_1+\dots+k_a=b.$

Denote by $\mathcal{E}_{a,d}$ the collection of elements in $\mathcal{E}_{a}$ with $a-d$ blocks,
$d$ of which have cardinality $2$ while the other $a-2d$ have cardinality $1$. Then we can describe the grouping
into diploid individuals in $\eta$  via $\zeta = \{\zeta_1,\dots,\zeta_{a-d}\}
\in \mathcal{E}_{a,d}:$ Let $D_i$ and $D_j$ belong to
the same diploid ancestral individual in $\eta$ if and only if $\{i,j\} \in
\zeta$.
We put
\begin{equation}
\label{eq:ellk}
\ell_j := \sum_{i \in \zeta_j} k_i, \quad j=1,\dots,a-d,
\end{equation}
which is the number of offspring classes in $\xi$ that belong to
the $j$-th ancestral individual described by $\eta$, and we have therefore that $\sum_{j=1}^{a-d} {\ell}_j=b.$

In order to calculate and describe the transition probabilities we will introduce another useful concept and corresponding notation.
Recall that  $V_{i,j}$ ($=V_{j,i}$) represents the random number of offspring of the parental individuals
$i$ and $j.$ By definition, each offspring inherits one chromosome copy
from each parent. Let us assume that we randomly and uniformly ``mark''
one of these chromosome copies as ``relevant'' (in the sense that it will be
this copy that we possibly later examine in the child),
let $\widehat{V}_{i,j}$ be the number of offspring with parents
from $i$ and $j$ who inherited their relevant chromosome copy from
parent $i$. Mathematically, this means that conditional on $(V_{u,w})_{1 \le u,w \le N}$,
$\widehat{V}_{i,j}$ is $\mathrm{Bin}(V_{i,j},\tfrac12)$-distributed,
$\widehat{V}_{j,i} = V_{i,j} - \widehat{V}_{i,j}$ and
$\widehat{V}_{i,j}$ and $\widehat{V}_{k,\ell}$ are independent when
$\{i,j\} \neq \{k,\ell\}$.
We will  write
\begin{equation*}
\widehat{V}_i := \sum_{j\neq i}^N \widehat{V}_{i,j}
\end{equation*}
for the total number of ``relevant'' offspring of individual $i$.
Note that we have $\sum_{i=1}^N \widehat{V}_i = N$ by definition.
\medskip

\begin{lemma}
\label{lem:Vhatex}
Both the array $\big( \widehat{V}_{i,j} \big)_{1\le i\neq j \le N}$ of relevant pairwise offspring
numbers and the vector $(\widehat{V}_{i})_{1\leq i\leq N}$ of relevant total offspring numbers are exchangeable.
\end{lemma}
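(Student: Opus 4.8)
The plan is to derive the exchangeability of $\big(\widehat{V}_{i,j}\big)$ directly from that of $\big(V_{i,j}\big)$ together with the symmetry of the coin-flipping mechanism, and then obtain the exchangeability of $(\widehat{V}_i)$ as an immediate corollary by summing over rows, exactly as in \eqref{eq:Vexch}.

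First I would set up the construction carefully. Conditionally on the array $(V_{u,w})$, for each unordered pair $\{i,j\}$ let $\widehat{V}_{i,j}$ be $\mathrm{Bin}(V_{i,j},\tfrac12)$-distributed with $\widehat{V}_{j,i}=V_{i,j}-\widehat{V}_{i,j}$, and with the families $\widehat{V}_{i,j}$ for distinct pairs $\{i,j\}$ conditionally independent. Equivalently, introduce i.i.d.\ auxiliary Bernoulli$(\tfrac12)$ variables $(B^{\{i,j\}}_\ell)_{\ell\ge 1}$, indexed by unordered pairs, independent of $(V_{u,w})$, and set $\widehat{V}_{i,j}=\sum_{\ell=1}^{V_{i,j}} B^{\{i,j\}}_\ell$ (each $\ell$ indexing one of the $V_{i,j}$ children of the couple $\{i,j\}$). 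This realises $\big(\widehat{V}_{i,j}\big)$ as a fixed measurable function of $(V_{u,w})$ and of the independent symmetric noise.

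Next I would fix a permutation $\sigma$ of $\{1,\dots,N\}$ and compare $\big(\widehat{V}_{\sigma(i),\sigma(j)}\big)_{i\ne j}$ with $\big(\widehat{V}_{i,j}\big)_{i\ne j}$. Permuting the couple labels induces a permutation of the pair-index set, under which the i.i.d.\ families $(B^{\{i,j\}}_\ell)_\ell$ are jointly invariant in distribution; hence, using the joint exchangeability \eqref{eq:exchangeable} of $(V_{u,w})$ and the independence of the noise, the pair $\big((V_{\sigma(i),\sigma(j)})_{i\ne j}, (B^{\{\sigma(i),\sigma(j)\}}_\ell)_{\ell}\big)$ has the same joint law as $\big((V_{i,j})_{i\ne j}, (B^{\{i,j\}}_\ell)_\ell\big)$. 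Applying the same measurable function to both sides gives $\big(\widehat{V}_{\sigma(i),\sigma(j)}\big)_{i\ne j}\mathop{=}^d\big(\widehat{V}_{i,j}\big)_{i\ne j}$, which is the claimed exchangeability. For the vector statement one then repeats the computation in \eqref{eq:Vexch} verbatim with $V$ replaced by $\widehat V$: $(\widehat{V}_{\sigma(1)},\dots,\widehat{V}_{\sigma(N)}) = \big(\sum_{j}\widehat{V}_{\sigma(i),j}\big)_i = \big(\sum_j \widehat{V}_{\sigma(i),\sigma(j)}\big)_i \mathop{=}^d \big(\sum_j \widehat{V}_{i,j}\big)_i = (\widehat{V}_1,\dots,\widehat{V}_N)$, where the middle step just re-indexes the summation over $j$.

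The only mildly delicate point — and it is really a bookkeeping rather than a conceptual obstacle — is to make precise that permuting the couple labels acts on the noise as a measure-preserving map: one must note that a permutation $\sigma$ of $\{1,\dots,N\}$ induces a bijection of the set of unordered pairs, that $\widehat{V}_{i,j}$ and $\widehat{V}_{j,i}$ are tied together (so the ``within-pair'' orientation is handled correctly when $\sigma$ does not preserve the order of $i,j$), and that the joint law of $(V_{u,w})$ and the fresh Bernoulli noise is invariant under the induced action because the two ingredients are independent and each is individually invariant. Once this is spelled out, the result follows.
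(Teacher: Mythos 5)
Your proof is correct and follows essentially the same route as the paper: the paper writes out the joint probability mass function of $(\widehat{V}_{i,j})$ as the pmf of $(V_{i,j})$ times a product of symmetric binomial factors $\binom{\hat v_{i,j}+\hat v_{j,i}}{\hat v_{i,j}}2^{-(\hat v_{i,j}+\hat v_{j,i})}$ and checks invariance under $\sigma$ directly, whereas you encode the same fair-coin thinning as independent symmetric Bernoulli noise and argue invariance of the joint law of the pair (array, noise) — two phrasings of one and the same symmetry argument, with your flagged ``orientation'' issue handled by the distributional symmetry $B\mathop{=}^d 1-B$. The deduction of exchangeability of $(\widehat{V}_i)$ by summing as in \eqref{eq:Vexch} is identical to the paper's.
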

\begin{proof}
Note that
\begin{equation}\label{eq:Vhat2}
\begin{split}
&\mathbb{P}\(\widehat{V}_{i,j}=\hat{v}_{i,j},1\leq i\neq j\leq N\)\\
=&\mathbb{P}\({V}_{i,j}=\hat{v}_{i,j}+\hat{v}_{j,i},1\leq i< j\leq N\)\prod_{1\leq i<j\leq n}{{\hat{v}_{i,j}+\hat{v}_{j,i}}\choose{\hat{v}_{i,j}}}2^{-\(\hat{v}_{i,j}+\hat{v}_{j,i}\)}.\\
\end{split}
\end{equation}
For any permutation $\sigma$ on $\mathbb{V}_N$, we have
\begin{eqnarray}
\nonumber
\mathbb{P}\(\widehat{V}_{i,j}=\hat{v}_{\sigma\(i\),\sigma\(j\)},1\leq i\neq j\leq N\)
&=&\mathbb{P}\({V}_{i,j}=\hat{v}_{\sigma\(i\),\sigma\(j\)}+\hat{v}_{\sigma\(j\),\sigma\(i\)},1\leq i< j\leq N\)\\
\label{eq:Vhat1}
&&
\cdot \prod_{1\leq i<j\leq n}{{\hat{v}_{\sigma\(i\),\sigma\(j\)}+\hat{v}_{\sigma\(j\),\sigma\(i\)}}\choose{\hat{v}_{\sigma\(i\),\sigma\(j\)}}}2^{-\(\hat{v}_{\sigma\(i\),\sigma\(j\)}+\hat{v}_{\sigma\(j\),\sigma\(i\)}\)}.
\end{eqnarray}
It follows from (\ref{eq:exchangeable}) that $\(\ref{eq:Vhat2}\)$ equals $\(\ref{eq:Vhat1}\)$,
i.e.\ ${\rm \( \widehat{V}_{\sigma(i),\sigma(j)} \)_{1\le i \neq j\le N}
\mathop{=}^d\( \widehat{V}_{i,j} \)_{1\le i\neq j \le N}}$.
\medskip

Exchangeability of $(\widehat{V}_{i})_{1\leq i\leq N}$ follows from
this as in \eqref{eq:Vexch}.
\end{proof}


\medskip

The following lemma states that
if the limits in (\ref{eq:Vfmcond}) exist then they can also be expressed in terms of the
quantities $(\widehat{V}_i)_{i=1, \dots, N}$ instead of the $(V_i)_{i=1, \dots, N}.$
\begin{lemma}
\label{lem:Vihatprod}
Let $b, c \in \bN$, $\ell_1,\dots,\ell_c \in \bN$ with
$\ell_1+\cdots+\ell_c=b$. Under Condition (\ref{eq:Vfmcond}), we have
\begin{equation*}
\lim_{N\to\infty} \frac{\EE\big[ (\widehat{V}_1)_{\ell_1} \cdots
(\widehat{V}_c)_{\ell_c}\big]}{c_N \, N^{b-c}}
= \lim_{N\to\infty} \frac{\EE\big[ ({V}_1)_{\ell_1} \cdots
({V}_c)_{\ell_c}\big]}{c_N \, N^{b-c}\, 2^b} .
\end{equation*}
\end{lemma}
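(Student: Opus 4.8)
The plan is to condition on the full array $(V_{u,w})_{1\le u,w\le N}$ and exploit the conditional binomial structure of the $\widehat V_{i,j}$'s. Recall that conditional on $(V_{u,w})$, the variables $\widehat V_{i,j}$ for $1\le i<j\le N$ are independent with $\widehat V_{i,j}\sim\mathrm{Bin}(V_{i,j},\tfrac12)$ and $\widehat V_{j,i}=V_{i,j}-\widehat V_{i,j}$, and $\widehat V_i=\sum_{j\neq i}\widehat V_{i,j}$. The key observation is the standard identity that if $B\sim\mathrm{Bin}(m,p)$ then $\EE[(B)_k]=(m)_k\,p^k$ for the falling factorial. More generally, a sum of independent binomials $\widehat V_i=\sum_{j\neq i}\widehat V_{i,j}$ has falling-factorial moments that can be computed via the multinomial expansion: $\EE[(\widehat V_i)_\ell\mid(V_{u,w})]=\ell!\sum x_{j_1}\cdots$ where the sum is over ways of distributing the $\ell$ ``slots'' among the partners $j$, each contributing a falling-factorial factor in $V_{i,j}$ times the corresponding power of $\tfrac12$. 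The cleanest route is to write $\widehat V_i=\sum_{j}\widehat V_{i,j}$ and use that for independent nonnegative integer random variables $Y_1,\dots,Y_r$, $\EE[(Y_1+\cdots+Y_r)_\ell]=\sum_{a_1+\cdots+a_r=\ell}\binom{\ell}{a_1,\dots,a_r}\EE[(Y_1)_{a_1}]\cdots\EE[(Y_r)_{a_r}]$; applied conditionally this gives $\EE[(\widehat V_i)_\ell\mid(V_{u,w})]=\sum_{\mathbf a}\binom{\ell}{\mathbf a}\prod_j (V_{i,j})_{a_j}2^{-a_j}$.

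From here the strategy is a leading-order asymptotic comparison. Expanding the product $\prod_{r=1}^c(\widehat V_r)_{\ell_r}$, taking conditional expectations, and then expectation over $(V_{u,w})$, one obtains a sum over multi-indices of terms of the form $\EE[\prod_{r,j}(V_{r,j})_{a_{r,j}}]\cdot 2^{-b}$ (since the exponents sum to $b$). One then argues, exactly as in the haploid moment computations underlying \eqref{eq:Vfmcond} and as in \cite[Lemma~3.5]{Mohle2001}, that under Condition \eqref{eq:Vfmcond} the dominant contribution as $N\to\infty$, after dividing by $c_N N^{b-c}$, comes from the single multi-index in which, for each $r$, all $\ell_r$ slots are assigned to a single partner index $j=j(r)$ with the $j(r)$ distinct, giving a term $\propto\EE[(V_{r,j(r)})_{\ell_r}]$; summing over the $N^{\,\cdot}$ choices of the $j(r)$ and using exchangeability plus $\sum_j V_{r,j}=V_r$ reconstructs $\EE[(V_1)_{\ell_1}\cdots(V_c)_{\ell_c}]$ up to lower-order corrections. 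All the ``mixed'' multi-indices (where some slots for a given $r$ go to different partners, or where partner indices coincide across different $r$'s) contribute strictly lower powers of $N$ relative to $c_N N^{b-c}$, so they vanish in the limit. The factor $2^{-b}$ survives, yielding precisely the claimed identity.

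The main obstacle is the bookkeeping for the error terms: one must verify that every multi-index other than the ``block-diagonal distinct'' one is genuinely $o(c_N N^{b-c})$. This requires knowing that moments like $\EE[(V_{r,j})_{a}(V_{r,j'})_{a'}\cdots]$ are of the order dictated by \eqref{eq:Vfmcond}/\eqref{eq:Vfmcond2} — i.e., that splitting the $\ell_r$ slots of one individual among several partners, or sharing a partner between two sampled individuals, costs at least one factor of $N^{-1}$ (roughly: each ``extra'' constraint that isn't absorbed into a falling factorial of a single $V_i$ loses a power of $N$). This is the same combinatorial estimate that Möhle uses; I would cite \cite[Lemma~3.5]{Mohle2001} and the discussion around \eqref{eq:Vfmcond2} for the needed bounds and then simply isolate the leading term. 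A short remark reconciling the conditional multinomial sum with $(V_r)_\ell=(\sum_j V_{r,j})_\ell$ — so that the leading term really is $\EE[\prod_r(V_r)_{\ell_r}]$ and not merely comparable to it — completes the argument.
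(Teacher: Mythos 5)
Your setup (conditioning on $(V_{u,w})$, using that $\widehat V_{i,j}\sim\mathrm{Bin}(V_{i,j},\tfrac12)$ conditionally, and expanding $(\widehat V_i)_{\ell_i}=(\sum_j\widehat V_{i,j})_{\ell_i}$ via the multinomial identity for falling factorials of sums) is exactly the paper's starting point. But the asymptotic step is wrong: the claim that \emph{all} mixed multi-indices (slots of a given $r$ spread over several partners, or a partner shared between two sampled individuals) are $o(c_N N^{b-c})$ is false, and already fails in the simplest case. Take $c=1$, $\ell_1=2$ in the diploid Wright--Fisher model: your ``dominant'' term is $\sum_j\EE[(\widehat V_{1,j})_2]=\tfrac{N-1}{4}\EE[(V_{1,2})_2]\sim 1/N$, which after dividing by $c_N N=1/2$ tends to $0$, whereas the mixed term $\sum_{j\neq j'}\EE[\widehat V_{1,j}\widehat V_{1,j'}]=\tfrac14(N-1)(N-2)\EE[V_{1,2}V_{1,3}]\to 1$ supplies the entire limit $\phi_1(2)/2=2$. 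In the Kingman regime the half-sibling cross terms are the whole story, not an error term. Your proposal is also internally inconsistent on this point: if the mixed multi-indices really vanished, then summing the single ``block-diagonal'' term over $j(r)$ would give $\sum_j\EE[(V_{r,j})_{\ell_r}]$ and not $\EE[(V_r)_{\ell_r}]$, so the ``short remark reconciling the conditional multinomial sum with $(V_r)_{\ell}=(\sum_j V_{r,j})_{\ell}$'' that you defer is precisely where the argument breaks.

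The paper's proof discards nothing. It keeps the full sum over multi-indices $(k_{i,j})$ with $\sum_j k_{i,j}=\ell_i$, notes that every multi-index carries the same total factor $2^{-b}$ (one factor $\tfrac12$ per slot, however the slots are distributed), and observes that the resulting expression agrees \emph{term by term} with the identical expansion of $2^{-b}\prod_r(V_r)_{\ell_r}$, except on edges $\{i,j\}$ with both endpoints in the sample ($i<j\le c$): there the conditional expectation produces $(V_{i,j})_{k_{i,j}+k_{j,i}}$ (because $\widehat V_{j,i}=V_{i,j}-\widehat V_{i,j}$ is anticorrelated with $\widehat V_{i,j}$), whereas the target expansion has $(V_{i,j})_{k_{i,j}}(V_{i,j})_{k_{j,i}}$. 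These two agree when $k_{i,j}+k_{j,i}\le 1$ and differ by $O\big(V_{i,j}^{\,k_{i,j}+k_{j,i}-1}\big)$ otherwise, and only that finite collection of discrepancies is shown to be $o(c_N N^{b-c})$ using \eqref{eq:Vfmcond}. To repair your argument you would need to replace the ``single dominant multi-index'' step by this exact matching and isolate the genuinely negligible part, which is the content of the paper's computation around the identity \eqref{eq:combid1}.
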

\begin{proof}
Recall the combinatorial identity for choosing (without replacement) $\ell$ objects out of $\sum_{i=1}^n a_i$ objects
\begin{align}
\label{eq:combid1}
\binom{\sum_{i=1}^n a_i}{\ell}=\frac1{\ell!} \big( {\textstyle \sum_{i=1}^n a_i} \big)_\ell =
\sum_{(k_1,\dots,k_n) \in \bN_0^n \atop k_1+\cdots+k_n=\ell}
\prod_{i=1}^n \frac{(a_i)_{k_i}}{k_i!}, \quad \ell, n \in \bN, \;
a_1,\dots,a_n \in \bN_0,
\end{align}
which results from choosing exactly $k_i$ out of $a_i$ objects and from considering all the possible choices of
$k_1+\cdots+k_n=\ell.$ Now, set
\[
\mathcal{K}_N(\ell_1,\dots,\ell_c)
:= \big\{ (k_{i,j})_{i=1,\dots,c,\,j=1,\dots,N} \in \bN_0^{c \times N} :
k_{i,1}+\cdots+k_{i,N}=\ell_i \; \text{with}\;k_{i,i}=0\; \text{for}\; i=1,\dots,c \big\}.
\]
Then, expanding the definition of $\widehat{V}_i$ and using \eqref{eq:combid1}
yield
\begin{align}
\prod_{i=1}^c \frac{(\widehat{V}_i)_{\ell_i}}{\ell_i!} & =
\prod_{i=1}^c \Big\{ \hspace{-0.8em}
\sum_{(k_{i,1},\dots,k_{i,N}) \in \bN_0^N \atop
k_{i,1}+\cdots+{k_{i,N}}=\ell_i, k_{i,i}=0}
\prod_{j \neq i}^N \frac{(\widehat{V}_{i,j})_{k_{i,j}}}{k_{i,j}!}
\Big\}
= \sum_{ (k_{i,j}) \in \mathcal{K}_N(\ell_1,\dots,\ell_c) }
\prod_{i=1}^c \Big\{
\prod_{j\neq i}^N \frac{(\widehat{V}_{i,j})_{k_{i,j}}}{k_{i,j}!} \Big\}\
\notag \\
\label{eq0:lem:Vihatprod}
& = \sum_{ (k_{i,j}) \in \mathcal{K}_N(\ell_1,\dots,\ell_c) }
\Big\{ \prod_{1 \le i < j \le c} \frac{(\widehat{V}_{i,j})_{k_{i,j}}}{k_{i,j}!}
\frac{(V_{i,j}-\widehat{V}_{i,j})_{k_{j,i}}}{k_{j,i}!} \Big\}
\times
\Big\{ \prod_{i=1}^c \prod_{j=c+1}^N \frac{(\widehat{V}_{i,j})_{k_{i,j}}}{k_{i,j}!}
\Big\}
\end{align}
Thus,
\begin{align}
& \EE\Big[ \prod_{i=1}^c \frac{(\widehat{V}_i)_{\ell_i}}{\ell_i!} \,\Big| \,
(V_{u,w})_{1 \le u,w \le N} \Big] \notag \\
& = \sum_{(k_{i,j}) \in \mathcal{K}_N(\ell_1,\dots,\ell_c)}
\Big\{ \prod_{1 \le i < j \le c}
\frac{1}{2^{k_{i,j}+k_{j,i}}}\frac{(V_{i,j})_{k_{i,j}+k_{j,i}}}{k_{i,j}! \, k_{j,i}!}
\Big\} \times
\Big\{ \prod_{i=1}^c \prod_{j=c+1}^N \frac{1}{2^{k_{i,j}}}
\frac{(V_{i,j})_{k_{i,j}}}{k_{i,j}!}
\Big\} \notag \\
\label{eq1:lem:Vihatprod}
& = \frac1{2^b} \sum_{(k_{i,j}) \in \mathcal{K}_N(\ell_1,\dots,\ell_c)}
\Big\{ \prod_{1 \le i < j \le c} \frac{(V_{i,j})_{k_{i,j}+k_{j,i}}}{k_{i,j}! \, k_{j,i}!}
\Big\} \times
\Big\{ \prod_{i=1}^c \prod_{j=c+1}^N \frac{(V_{i,j})_{k_{i,j}}}{k_{i,j}!} \Big\}
\end{align}
where we have used the fact that
\[
\EE \Big[ (\widehat{V}_{i,j})_{k_{i,j}}
(V_{i,j}-\widehat{V}_{i,j})_{k_{j,i}} \, \Big| \,
(V_{u,w})_{1 \le u,w \le N} \Big]
= \frac1{2^{k_{i,j}+k_{j,i}}} (V_{i,j})_{k_{i,j}+k_{j,i}}
\]
(a special case of an identity for mixed factorial moments of a multinomial
vector, see also Lemma~\ref{lem:Multinomfactorialmoments})
and the conditional independence properties of $\widehat{V}_{i,j}$'s
in the first equation. Note that if
we replace in \eqref{eq1:lem:Vihatprod} the term
\begin{align}
\label{eq3:lem:Vihatprod}
\prod_{1 \le i < j \le c} \frac{(V_{i,j})_{k_{i,j}+k_{j,i}}}{k_{i,j}! \, k_{j,i}!}
\quad \text{by} \quad
\prod_{1 \le i < j \le c}
\frac{(V_{i,j})_{k_{i,j}} (V_{i,j})_{k_{j,i}}}{k_{i,j}! \, k_{j,i}!}
\end{align}
then we obtain (as in (\ref{eq0:lem:Vihatprod})),
\begin{align}
\label{eq4:lem:Vihatprod}
\frac1{2^b} \sum_{(k_{i,j}) \in \mathcal{K}_N(\ell_1,\dots,\ell_c)}
\prod_{i=1}^c \prod_{j=1 \atop j \neq i}^N \frac{(V_{i,j})_{k_{i,j}}}{k_{i,j}!}
= \frac1{2^b} \prod_{i=1}^c \frac{({V}_i)_{\ell_i}}{\ell_i!}.
\end{align}
The difference of the two terms $(V_{i,j})_{k_{i,j}+k_{j,i}}$ and
$(V_{i,j})_{k_{i,j}} (V_{i,j})_{k_{j,i}}$ which get replaced inside the
product in \eqref{eq3:lem:Vihatprod} vanishes whenever $k_{i,j}+k_{j,i} \le 1$ and is
\[
O\big(  (V_{i,j})^{k_{i,j}+k_{j,i}-1} \big)
\]
otherwise (with a combinatorial constant that depends on $c$ and
$\ell_1,\dots,\ell_c$ but not on $N$). Thus,
\begin{align*}
\frac{1}{c_N N^{b-c}} \Big(
\EE\big[ \text{term in \eqref{eq1:lem:Vihatprod}} \big]
- \EE\big[ \text{term on r.h.s.\ of \eqref{eq4:lem:Vihatprod}} \big] \Big)
\longrightarrow 0 \quad \text{as }N\to\infty
\end{align*}
because of Condition (\ref{eq:Vfmcond}), which is the claim.
\end{proof}

With the help of this lemma we can now prove the following:
\begin{lemma}
\label{lem:piNrenorm}
For $\xi$, $\eta$ as in \eqref{xi}, \eqref{eta} with $\ell_j$ from
\eqref{eq:ellk}, we have
\begin{align}
\label{eq:piNrenorm}
\lim_{N\to\infty} \frac{\pi_{n,N}({\xi,\eta})}{c_N}
\, & = \lim_{N\to\infty} \sum_{i_1,\dots,i_{a-d}=1 \atop \text{distinct}}^N
\frac{\EE\big[ (V_{i_1})_{\ell_1} (V_{i_2})_{\ell_2} \cdots
(V_{i_{a-d}})_{\ell_{a-d}} \big] 2^{a-d}}{c_N N^b 2^{2b}}  \notag \\
& = \lim_{N\to\infty} \frac{\EE\big[ (V_{1})_{\ell_1} (V_{2})_{\ell_2} \cdots
(V_{{a-d}})_{\ell_{a-d}} \big]}{c_N \, 2^b \, (2N)^{b-a+d}}.
\end{align}
\end{lemma}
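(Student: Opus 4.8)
The plan is to compute $\pi_{n,N}(\xi,\eta)$ exactly by a careful bookkeeping of the assignment of parental genes, then pass to the limit using Lemma~\ref{lem:Vihatprod}. The starting observation is that the transition from $\xi = \{C_1,\dots,C_b\}$ to $\eta$ is governed by where the $b$ ancestral genes in generation $m$ land in generation $m+1$. Using the ``relevant chromosome'' device: each of the $b$ genes, when we go back one step, picks a parental individual and, within that individual, one of its two chromosome copies; since a gene in a child that came from couple $(i,j)$ had its relevant copy from $i$ with probability $\tfrac12$, the number of children of $(i,j)$ whose relevant copy sits on $i$'s chromosomes is exactly $\widehat V_{i,j}$, and a given ancestral gene at $i$ is chosen uniformly among the $\widehat V_i$ ``relevant'' slots attached to $i$'s chromosomes. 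For the event $\xi \to \eta$ we need: the $b$ classes get grouped into the $a$ ancestral genes $D_1,\dots,D_a$ (with $D_r$ absorbing $k_r$ classes), these $a$ ancestral genes sit on $a-d$ distinct parental individuals grouped according to $\zeta$, and for each parental individual $i_s$ (carrying $\ell_s$ classes distributed over at most two of its chromosome slots as prescribed by $\eta$) we need the $\ell_s$ relevant slots to fall correctly.

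**First**, I would write $\pi_{n,N}(\xi,\eta)$ as a sum over ordered tuples $(i_1,\dots,i_{a-d})$ of distinct parental individuals of the probability that individual $i_s$ receives exactly the prescribed $\ell_s$ ancestral genes on the prescribed one or two of its chromosomes, for $s=1,\dots,a-d$. Conditioning on $(\widehat V_{u,w})$, each such joint event is an elementary ordered sampling-without-replacement probability: we are distributing the $b$ labelled ancestral genes uniformly among the $N$ relevant slots, so the probability that a given ancestral gene lands on a given chromosome of $i_s$ is $1/(2N)$ up to lower-order corrections, and the leading term is
\[
\frac{\prod_{s=1}^{a-d}(\widehat V_{i_s})_{\ell_s}}{(N)_b}
\]
times a combinatorial factor accounting for: (a) the $2^{-(a-d)}$-type factor from the $a-d$ ancestral genes sitting on specified \emph{single} chromosomes versus the two chromosomes per individual — more precisely, among the $\ell_s$ relevant offspring of $i_s$, we need those belonging to $C$'s in $D_{2t-1}$ to have come (in the child) via a copy that now we further split by which of $i_s$'s two chromosomes — yielding the $2^{a-d}/2^{2b}$ bookkeeping once we also account that each of the $b$ classes corresponds to one gene copy among $2N$ and the ``relevant marking'' was itself a coin flip. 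The cleanest way is: probability $=\EE\big[\prod_{s}(\widehat V_{i_s})_{\ell_s}\big]\cdot c(n,\xi,\eta)/(N)_b$ where $c(n,\xi,\eta)$ is an explicit constant; matching it against the known haploid formula (cf.\ \cite[Sect.~3]{Mohle2001}) and the diploid structure of \cite{MohleSagitov2003} fixes $c(n,\xi,\eta)= 2^{a-d}/2^{2b}\cdot(\text{stuff that is }1+o(1)\text{ after normalising by }c_N)$.

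**Next**, divide by $c_N$ and let $N\to\infty$. Since $(N)_b = N^b(1+O(1/N))$ and each lower-order term in the sampling probability (collisions among the slots within a single individual, or using only one chromosome when two were available) is smaller by a factor $1/N$, and since by Condition \eqref{eq:Vfmcond} the quantity $\EE[\prod_s(\widehat V_{i_s})_{\ell_s}]/(c_N N^{b-(a-d)})$ is bounded (this is exactly the regime of Lemma~\ref{lem:Vihatprod}), all the error terms vanish after division by $c_N$. What survives is
\[
\lim_{N\to\infty}\sum_{i_1,\dots,i_{a-d}=1 \atop \text{distinct}}^N
\frac{\EE\big[(\widehat V_{i_1})_{\ell_1}\cdots(\widehat V_{i_{a-d}})_{\ell_{a-d}}\big]\,2^{a-d}}{c_N\,N^b\,2^{2b}},
\]
and by exchangeability of $(\widehat V_i)_{1\le i\le N}$ (Lemma~\ref{lem:Vhatex}) the sum over distinct indices equals $(N)_{a-d}\,\EE[(\widehat V_1)_{\ell_1}\cdots(\widehat V_{a-d})_{\ell_{a-d}}]$, which is $N^{a-d}(1+o(1))$ times that expectation. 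Finally, applying Lemma~\ref{lem:Vihatprod} with $c = a-d$ replaces $\EE[\prod(\widehat V_s)_{\ell_s}]$ by $2^{-b}\EE[\prod(V_s)_{\ell_s}]$ in the limit, and collecting the powers of $2$ and $N$ ($N^{a-d}\cdot N^{-b}\cdot 2^{a-d}\cdot 2^{-2b}\cdot 2^{-b}$ rearranged, together with $N^{a-d-b}=(2N)^{a-d-b}2^{b-a+d}$) gives exactly the second line of \eqref{eq:piNrenorm}. The main obstacle is the first step: getting the combinatorial constant $c(n,\xi,\eta)$ right, i.e.\ carefully accounting for the two-layer randomness (which parent, then which of that parent's two chromosomes) so that the $2^{a-d}/2^{2b}$ factor emerges correctly and all genuinely lower-order contributions are identified as $O(1/N)$ relative to the leading term; once the exact one-generation transition probability is in hand, the limit is a routine application of the two preceding lemmas.
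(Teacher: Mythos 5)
Your overall strategy is the same as the paper's: view the one-step transition from the completely dispersed state $\xi$ as drawing the $b$ ancestral genes without replacement from an urn containing $\widehat V_i$ ``relevant'' slots of colour $i$, use exchangeability of the $\widehat V_i$ to collapse the sum over distinct indices, and then invoke Lemma~\ref{lem:Vihatprod} to replace the $\widehat V_i$ by the $V_i$. However, there is a genuine gap at exactly the point you flag as ``the main obstacle'': you never derive the combinatorial constant, you only assert that matching against known haploid/diploid formulas fixes it to be $2^{a-d}/2^{2b}$, and the value you assert is inconsistent with the rest of your argument. The correct \emph{exact} (not merely leading-order) formula is
\[
\pi_{n,N}(\xi,\eta)=\sum_{i_1,\dots,i_{a-d}\ \text{distinct}}\EE\Big[\frac{\prod_{r=1}^{a-d}(\widehat V_{i_r})_{\ell_r}}{(N)_b}\prod_{r=1}^{a-d}2^{-\ell_r+1}\Big]
=2^{-b+a-d}\,\frac{(N)_{a-d}}{(N)_b}\,\EE\Big[\prod_{r=1}^{a-d}(\widehat V_r)_{\ell_r}\Big],
\]
i.e.\ the constant attached to the $\widehat V$-expression is $2^{a-d}/2^{b}$, not $2^{a-d}/2^{2b}$. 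The factor $\prod_r 2^{-\ell_r+1}$ has a clean interpretation: for each ancestral individual $i_r$ one must additionally specify which of its $\ell_r$ incoming classes descends from which of its two chromosome copies ($2^{-\ell_r}$ for a fixed labelling, times $2$ because the roles of the two chromosomes can be swapped). The remaining factor $2^{-b}$ that turns $2^{a-d}/2^{b}$ into the $2^{a-d}/2^{2b}$ appearing in the first line of \eqref{eq:piNrenorm} is precisely what Lemma~\ref{lem:Vihatprod} supplies when passing from the $\widehat V_i$ to the $V_i$.

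In your write-up this $2^{-b}$ is counted twice: your displayed intermediate expression already carries $2^{a-d}/2^{2b}$ together with the $\widehat V$'s, and you then apply Lemma~\ref{lem:Vihatprod} to insert another $2^{-b}$, so your final bookkeeping ($2^{a-d}\cdot 2^{-2b}\cdot 2^{-b}$ combined with $N^{a-d-b}=(2N)^{a-d-b}2^{b-a+d}$) produces $2^{-2b}(2N)^{a-d-b}$ rather than the required $2^{-b}(2N)^{a-d-b}$ --- off by $2^{b}$. Relatedly, the one-step probability needs no ``$1+o(1)$'' corrections and no appeal to events being ``lower order'': conditionally on the $\widehat V$'s the urn computation is exact, and the entire asymptotic burden is carried by $(N)_{a-d}/(N)_b\sim N^{a-d-b}$ and by Lemma~\ref{lem:Vihatprod}. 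So the missing ingredient is the honest derivation of the factor $\prod_r 2^{-\ell_r+1}$; with that in place the rest of your argument goes through exactly as in the paper.
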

If $\ell_1,\dots,\ell_{a-d} \ge 2$ we see from \eqref{eq:Vfmcond}
that the limit in \eqref{eq:piNrenorm} equals
\begin{align*}
\phi_{a-d}(\ell_{1},\dots, \ell_{a-d}) \cdot \frac{1}{2^{b-a+d}}.
\end{align*}
When $s\ge 1$ of the $\ell_i$ are equal to $1$, say $\ell_1,\dots,\ell_{a-d-s} \ge 2$,
$\ell_{a-d-s+1}=\cdots=\ell_{a-d}=1$ we see from \eqref{eq:Vfmcond2}
that the limit in \eqref{eq:piNrenorm} equals
\begin{align*}
\psi_{a-d-s,s}(\ell_{1},\dots, \ell_{a-d-s}) \cdot \frac{1}{2^{b-a+d}}.
\end{align*}

\begin{proof}[Proof of Lemma~\ref{lem:piNrenorm}]
Since $\xi \in \mathcal{E}_n \subset \mathcal{S}_n$ is a completely
dispersed state, its $b$ classes belong to $b$ distinct individuals
in the offspring generation and we can think of the chromosome
copies which belong to $\xi$ as the relevant ones (in the corresponding
individuals), i.e., the transition
from $\xi$ to $\eta$ corresponds to drawing $b$ times without replacement
from an urn which contains $\widehat{V}_i$ balls of colour $i$ for
$i=1,\dots,N$. Thus
\begin{align}
\label{eq:piNrepr1}
\pi_{n,N}({\xi,\eta}) = \sum_{i_1,\dots,i_{a-d}=1 \atop \text{distinct}}^N
\EE\Big[ \frac{\prod_{r=1}^{a-d} (\widehat{V}_{i_{r}})_{\ell_r}}{(N)_b}
\times \prod_{r=1}^{a-d} 2^{-\ell_r+1} \Big]
= 2^{-b+a-d} \frac{(N)_{a-d}}{(N)_b}
\EE\Big[ \prod_{r=1}^{a-d} (\widehat{V}_{r})_{\ell_r} \Big].
\end{align}
Note that the factor $\prod_{r=1}^{a-d} 2^{-\ell_r+1}$ accounts for the
fact that we still have to assign for each $r=1,\dots,a-d$ which of
the $\ell_r$ classes in $\cup_{i \in \zeta_r} D_i$ descends from which
of the two chromosome copies in the $i_r$-th ancestral individual
(this makes $\ell_r$ assigned picks if we decree who descends from the
``first'' and who from the ``second'' chromosome in individual $i_r$
but we gain a factor of $2$ because the roles of the ``first'' and the
``second'' chromosome are arbitrary and can be swapped). The second
equation is a consequence of exchangeability of the $\widehat{V}_i$. Finally,
\eqref{eq:piNrenorm} follows from \eqref{eq:piNrepr1} and
Lemma~\ref{lem:Vihatprod}.
\end{proof}

For $n\in\bN$, $\xi, \eta \in \mathcal{E}_n$, put
\begin{align}\label{eq:hatpisum}
\widetilde{\pi}_{n,N}(\xi,\eta) :=
\sum_{\left\{\eta' \in \mathcal{S}_n \, : \, \mathsf{cd}(\eta')=\eta\right\}}
\pi_{n,N}(\xi,\eta').
\end{align}
Note that $\widetilde{\pi}_{n,N}$ is a Markov transition matrix on
$\mathcal{E}_n$, a step according to $\widetilde{\pi}_{n,N}$ means first
taking a step according to $\pi_{n,N}$ and then applying the complete
dispersion operator, i.e., ignoring the grouping into diploid individuals.

In particular, there is sampling consistency:
For $\xi\in\mathcal{E}_n$, $\eta = \{ D_1,\dots,D_a \} \in \mathcal{E}_{n}$ we have
\begin{align}
\label{eq:pihatcons}
\widetilde{\pi}_{n,N}(\xi,\eta)
= \,\, & \widetilde{\pi}_{n+1,N}(\xi \cup \{\{n+1\}\},\eta \cup \{\{n+1\}\})
\notag \\
& {} + \sum_{i=1}^a \widetilde{\pi}_{n+1,N}\big(\xi \cup \{\{n+1\}\}, \{D_1,\dots,D_{i-1},
D_i \cup \{n+1\},D_{i+1},\dots,D_a\}\big).
\end{align}
Furthermore, the transition probabilities depend on $n$ only
implicitly through the merger structure that the transition from
$\xi$ to $\eta$ induces.

\begin{lemma}
\label{lem:piNcd}
Let $n\in\bN$, $\xi \in \mathcal{E}_n$, $\eta \in \mathcal{S}_n$ where
$\xi$ has $b$ classes and $\mathsf{cd}(\eta)$ arises from $\xi$
by merging $j \ge 1$ groups of classes with sizes $k_1, k_2, \dots, k_j \ge 2$
from $\xi$ and leaving $s \ge 0$ singleton classes (in particular,
$\eta$ has $a=j+s$ classes and $b=k_1+\cdots+k_j+s$). Then
\begin{align}
\label{eq:piNcdlim}
\lim_{N\to\infty} \frac1{c_N} \widetilde{\pi}_{n,N}\big(\xi, \mathsf{cd}(\eta) \big)
= \lambda_{b;k_1,\dots,k_j;s}
\end{align}
where $\lambda_{b;k_1,\dots,k_j;s}$ are the transition rates of the
$\Xi$-coalescent defined in Theorem~\ref{thm:res} (recalled in
\eqref{eq:Xitransrate}).
\end{lemma}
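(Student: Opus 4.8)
The plan is to sum the renormalised one-step transition probabilities from Lemma~\ref{lem:piNrenorm} over all diploid groupings $\eta' \in \mathcal{S}_n$ with $\mathsf{cd}(\eta') = \mathsf{cd}(\eta)$, and check that the result matches $\lambda_{b;k_1,\dots,k_j;s}$ as written in \eqref{eq:Xitransrate}. Concretely, fix a target partition $\mathsf{cd}(\eta) \in \mathcal{E}_n$ having the merger structure $(k_1,\dots,k_j;s)$ relative to $\xi$. Every $\eta'$ with $\mathsf{cd}(\eta')=\mathsf{cd}(\eta)$ is obtained by choosing a pairing $\zeta \in \mathcal{E}_{a,d}$ on the $a=j+s$ blocks $D_1,\dots,D_a$ of $\mathsf{cd}(\eta)$, i.e.\ deciding which pairs of ancestral blocks sit in a common diploid individual; here $d$ ranges over $0,1,\dots,\lfloor a/2 \rfloor$. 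For each such $\zeta$, the numbers $\ell_1,\dots,\ell_{a-d}$ from \eqref{eq:ellk} are the sums of the $k_i$ over the blocks merged into each ancestral individual, and $\sum_{r=1}^{a-d}\ell_r = b$. The core of the argument is then: from Lemma~\ref{lem:piNrenorm} (and the two displayed consequences following it), $\lim_{N} c_N^{-1}\pi_{n,N}(\xi,\eta')$ equals $\phi$ or $\psi$ of the $\ell_r \ge 2$, with an explicit factor $2^{a-d-b}$ (written $1/2^{b-a+d}$ there).

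First I would reduce the $\phi$/$\psi$ values back to integrals against $\Xi'$. Using \eqref{eq:Vfmrel} for the $\ge 2$ coordinates and the analogous representation for $\psi_{j',s'}$ (which exists by the remark citing \cite[Lemma~3.5]{Mohle2001} and can be obtained from \eqref{eq:Vfmrel} by the same argument that produces the $s$-sum and $(1-|x|)^{s-\ell}$ factor in \eqref{eq:Xitransrate}), every summand becomes $2\int_{\Delta} (\cdots) \Xi'(dx)/(x,x)$ plus possibly the Kingman atom contribution. Then I would push the integrand through $\varphi$: since $\varphi(x_1,x_2,\dots) = (x_1/2,x_1/2,x_2/2,x_2/2,\dots)$ we have $(\varphi(x),\varphi(x)) = \tfrac12 (x,x)$, and a sum of the form $\sum_{\text{distinct } i_1,\dots,i_p} y_{i_1}^{m_1}\cdots y_{i_p}^{m_p}$ over $y=\varphi(x)$ expands, after grouping the two equal $x_q/2$-coordinates coming from each $x_q$, into a sum over set partitions of the index set into singletons and pairs — exactly the combinatorial bookkeeping that the $\zeta$-sum and the $2^{a-d-b}$ factors encode. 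So the content of the proof is the identity
\begin{equation*}
\sum_{\zeta \in \bigcup_d \mathcal{E}_{a,d}} 2^{a-d} \sum_{\text{distinct } i_1,\dots,i_{a-d}} x_{i_1}^{\ell_1(\zeta)}\cdots x_{i_{a-d}}^{\ell_{a-d}(\zeta)}
= 2^{b}\sum_{\text{distinct } q_1,\dots,q_b} \Big(\tfrac{x_{q_1}}{2}\Big)^{k_1}\cdots,
\end{equation*}
handled at the level of $\Xi'$ versus $\Xi = \Xi'\circ\varphi^{-1}$, together with the matching of the Kingman pieces ($\Xi'(\{\mathbf 0\}) = \Xi(\{\mathbf 0\})$ since $\varphi(\mathbf 0)=\mathbf 0$), and of the singleton factors $(1-|x|)$ versus $(1-|\varphi(x)|)=1-|x|$.

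The main obstacle I expect is purely bookkeeping: correctly accounting for how a diploid pairing $\zeta$ that lumps two blocks of sizes $k_{i}$ and $k_{i'}$ into one ancestral individual corresponds, under $\varphi$, to assigning those two merged groups to the two half-weight coordinates $x_q/2, x_q/2$ derived from the \emph{same} $x_q$ — and conversely an unpaired block corresponds to one half-weight coordinate with the partner half-weight left ``empty'' (contributing a factor absorbed into $1-|x|$ only when the block is a singleton, i.e.\ $\ell_r=1$). One has to be careful that the powers of $2$ work out: each diploid individual carries two chromosome copies so contributes a factor $2$ from the symmetry of labelling them (the $2^{a-d}$ in Lemma~\ref{lem:piNrenorm}), while $\varphi$ halves each frequency $b$ times ($2^{-b}$ overall), and the net $2^{a-d-b}$ is precisely what appears. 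I would verify the identity first in the all-binary case $k_1=\cdots=k_j=2$, $s=0$ to pin down the constants, then argue the general case by the same expansion; the sampling-consistency relation \eqref{eq:pihatcons} can be invoked to double-check the treatment of singletons. Once this combinatorial identity is in hand, comparing term by term with \eqref{eq:Xitransrate} with $\Xi$ in place of $\Xi'$ gives \eqref{eq:piNcdlim}.
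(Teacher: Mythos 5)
Your plan is essentially the paper's proof: the same parametrisation of $\mathsf{cd}^{-1}(\mathsf{cd}(\eta))$ by pairings $\zeta\in\mathcal{E}_{a,d}$, the same use of Lemma~\ref{lem:piNrenorm}, and the same key combinatorial identity identifying the $\zeta$-sum with weights $2^{a-d}$ (together with $(\varphi(x),\varphi(x))=\tfrac12(x,x)$ and $\Xi'(\{\mathbf 0\})=\Xi(\{\mathbf 0\})$) as the expansion of $\sum_{\text{distinct}}\prod_m\big(\varphi(x)_{i'_m}\big)^{k_m}$. The only divergence is that you propose to treat $s>0$ directly through the integral representation of $\psi_{j,s}$, whereas the paper establishes only the case $s=0$ this way and then obtains general $s$ by induction via the sampling-consistency relation \eqref{eq:pihatcons} and the consistency of the rates $\lambda_{b;k_1,\dots,k_j;s}$ --- both routes work, the inductive one avoiding the extra bookkeeping with the $\binom{s}{\ell}$ and $(1-|x|)^{s-\ell}$ terms that your direct approach would have to carry through the $\zeta$-sum.
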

\begin{proof}
  Assume $\xi$ and $\eta$ are given by \eqref{xi} and \eqref{eta} with classes denoted by $C_i$ and $D_i,$
  respectively.    Recall that $D_i$ is a union of $k_i \ge 1$ classes from $\xi$ with
$k_1+\dots+k_a=b$ and that $\zeta$ describes the  grouping
into diploid individuals. 

  Now note that $\mathsf{cd}^{-1}\(\mathsf{cd}\(\eta\)\) := \{ \eta' \in
  \mathcal{S}_n \, : \, \mathsf{cd}(\eta')=\mathsf{cd}(\eta)\}$ can be
  parametrised by choosing any $d \in \{0,1,\dots,\lfloor a/2
  \rfloor\}$ and $\zeta \in \mathcal{E}_{a,d}$ ($d$ describes the
  number of ancestral individuals in $\eta$ carrying two ancestral genes and $\zeta$
  describes the grouping of the $a$ ancestral chromosomes in $\eta$
  into diploid individuals).
  For $\zeta \in \mathcal{E}_{a,d}$ and $i=1,2,\ldots,a$,
  we define $\widehat{\zeta}(i) =k \in \{ 1,\dots, a-d\}$ if $i\in\zeta_k$.
  \smallskip

First consider the case $s=0$ and hence $a=j>1$:
From (\ref{eq:hatpisum}), Lemma~\ref{lem:piNrenorm}, as well as (\ref{eq:Vfmcond}) and (\ref{eq:Vfmrel}), 
\begin{align}
\nonumber
\lim_{N\to\infty} \frac1{c_N} \widetilde{\pi}_{n,N}\big(\xi, \mathsf{cd}(\eta) \big)
&
\nonumber
= \sum_{d=0}^{\lfloor a/2 \rfloor} \sum_{\zeta \in \mathcal{E}_{a,d}} 2^{-b+a-d}
\int_\Delta \sum_{\scriptstyle i_1,\dots,i_{a-d}=1 \atop
\scriptstyle \text{distinct}}^\infty \prod_{{\ell}=1}^{a-d}
x_{i_{\ell}}^{\big(\sum_{r \in \zeta_{\ell}} k_r\big)} \, \frac{2\Xi'(dx)}{(x,x)}
\notag \\
&
\nonumber
= \int_\Delta \sum_{d=0}^{\lfloor a/2 \rfloor} 2^{a-d}
\sum_{\scriptstyle i_1,\dots,i_{a-d}=1 \atop
\scriptstyle \text{distinct}}^\infty \sum_{\zeta \in \mathcal{E}_{a,d}}
\(\tfrac12 x_{i_{\widehat\zeta(1)}}\)^{k_1}
\(\tfrac12 x_{i_{\widehat\zeta(2)}}\)^{k_2} \cdots
\(\tfrac12 x_{i_{\widehat\zeta(a)}}\)^{k_a} \, \frac{2\Xi'(dx)}{(x,x)} \\
&
\nonumber
= \int_\Delta \sum_{\scriptstyle i'_1,\dots,i'_{a}=1 \atop
\scriptstyle \text{distinct}}^\infty
\( \varphi(x)_{i'_1} \)^{k_1} \( \varphi(x)_{i'_2} \)^{k_2}
\cdots \( \varphi(x)_{i'_a} \)^{k_a} \,
\frac{\Xi'(dx)}{(\varphi(x),\varphi(x))} \\
&
\label{eq:lim_pi_tilde}
= \int_\Delta \sum_{\scriptstyle i'_1,\dots,i'_{a}=1 \atop
\scriptstyle \text{distinct}}^\infty x_{i'_1}^{k_1} \cdots x_{i'_a}^{k_a} \,
\frac{\Xi(dx)}{(x,x)} = \lambda_{b;k_1,\dots,k_a;0}
\end{align}
where we used that by definition of $\varphi$, for any
function $F : [0,1]^a \to [0,\infty)$ and $x=(x_1,x_2,\dots) \in \Delta$
\begin{align}
\nonumber
  \sum_{\scriptstyle i'_1,\dots,i'_{a}=1 \atop
    \scriptstyle \text{distinct}}^\infty
  F\( \varphi(x)_{i'_1}, \varphi(x)_{i'_2}, \dots, \varphi(x)_{i'_a} \)
  = \sum_{d=0}^{\lfloor a/2 \rfloor} 2^{a-d}
  \sum_{\scriptstyle i_1,\dots,i_{a-d}=1 \atop
  \scriptstyle \text{distinct}}^\infty \sum_{\zeta \in \mathcal{E}_{a,d}}
F\( \frac{1}{2} x_{i_{\widehat\zeta(1)}},  \frac{1}{2} x_{i_{\widehat\zeta(2)}}, \dots,  \frac{1}{2} x_{i_{\widehat\zeta(a)}} \)
\end{align}
and that $(\varphi(x),\varphi(x)) = \tfrac12 (x,x)$. In the case $j=1,$ we additionally have the Kingman
term in (\ref{eq:lim_pi_tilde}) of the form
\begin{equation*}
{\ind}_{\{j=1,k_1=2\}}\  2 \ \Xi'(\{\mathbf{0}\}) 2^{-b+a-d}= {\ind}_{\{j=1,k_1=2\}}  \Xi(\{\mathbf{0}\})
\end{equation*}
since $-b+a-d=-1$ in this case and $\Xi'(\{\mathbf{0}\})=\Xi(\{\mathbf{0}\}).$ Thus, \eqref{eq:piNcdlim} holds when $s=0$.
\smallskip

For the general case $s>0$, we can employ the consistency relations (\ref{eq:pihatcons}) in order to use induction on $s:$
%
Assume that \eqref{eq:piNcdlim} holds whenever the number of
``singleton classes'' involved is at most $s$, and $b=k_1+\cdots+k_j+s$ with $k_1,\dots,k_j\ge
2$. Let
$\mathsf{cd}(\eta) \in \mathcal{E}_{n+1}$ arise from $\xi \in
\mathcal{E}_{n+1}$ by a merger in $j$ groups of sizes $k_1,\dots,k_j\ge
2$, leaving $s+1$ singleton classes. By the symmetries of the model,
we may (without changing the transition probability) assume that one
of the relevant singleton classes in $\eta$ is $\{n+1\}$. Then,
rearranging \eqref{eq:pihatcons} and using the induction hypothesis we
see that
\begin{equation*}
\lim_{N\to\infty}
\frac{\widetilde{\pi}_{n+1,N}\big(\xi, \mathsf{cd}(\eta) \big)}{c_N}
= \lambda_{b;k_1,\dots,k_j;s} - \sum_{i=1}^j \lambda_{b+1;k_1,\dots,k_i+1,\dots,k_j;s}
- s \lambda_{b+1;k_1,\dots,k_j,2;s-1}.
\end{equation*}
The term on the right-hand side equals $\lambda_{b+1;k_1,\dots,k_j;s+1}$
by the consistency relation for transition probabilities of
$\Xi$-coalescents (implicit in \cite[Eq.~(11) and Lemma~3.4]{Mohle2001},
explicitly spelled out for example in \cite[Eq.~(2.5)]{Sagitov2003}).
\end{proof}
\smallskip

A central ingredient in the proof of Theorem~\ref{thm:res} is the
following result from \cite{Mohle98a}.
It makes the separation of time-scales behind Theorem~\ref{thm:res}
explicit: On the fast time scale $O(1)$, any diploid sample configuration
is transformed into its complete dispersion whereas on the much slower
time scale $O(1/c_N)$, non-trivial merging occurs.

\begin{lemma}[M\"ohle \cite{Mohle98a}]\label{th:converge}
Let $X_N=\(X_N\(m\)\)_{m\in\mathbb{N}_0}$ be a sequence of time homogeneous Markov chains on a probability space $\(\Omega,\mathcal{F},\mathbb{P}\)$ with the same finite state space $\mathcal{S}$ and let $\mathbf{\Pi}_N$ denote the transition matrix of $X_N$. Assume that the following conditions are satisfied:
\begin{enumerate}[1.]
\item $A:=\lim_{N\rightarrow\infty}\mathbf{\Pi}_N$ exists and $\mathbf{\Pi}_N\neq A$ for all sufficiently large $N$.
\item $P:=\lim_{m\rightarrow\infty}A^m$ exists.
\item $G:=\lim_{N\rightarrow\infty}PB_NP$ exists, where $B_N:=\(\mathbf{\Pi}_N-A\)/c_N$ and $c_N:=\|\mathbf{\Pi}_N-A\|$ for all $N\in\mathbb{N}$.
\end{enumerate}
If the sequence of initial probability measures $\mathbb{P}_{X_N\(0\)}$ converge weakly to some probability measure $\mu$, then the finite dimensional distributions of the process $\(X_N\(\left\lfloor t/c_N\right\rfloor\)\)_{t\geq 0}$ converge to those of a time continuous Markov process $\(X_t\)_{t\geq 0}$ with initial distribution
$$X_0\mathop{=}^d\mu,$$
transition matrix $\mathbf{\Pi}\(t\):=P-I+e^{tG}=Pe^{tG}$, $t>0$, and infinitesimal generator $G$.
\end{lemma}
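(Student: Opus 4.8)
The plan is to derive everything from the single matrix identity
\[
\lim_{N\to\infty}\mathbf{\Pi}_N^{\lfloor t/c_N\rfloor}=P-I+e^{tG}=Pe^{tG}\qquad(t>0),
\]
after which convergence of finite-dimensional distributions follows from the Markov property. First I would record the structural facts. Fixing a submultiplicative matrix norm under which stochastic matrices have norm $1$ (e.g.\ the maximal row sum; the conclusion does not depend on this choice since only $k_N:=\lfloor t/c_N\rfloor$ with $k_Nc_N\to t$ enters), condition~1 gives $\|A^j\|=\|\mathbf{\Pi}_N\|=\|P\|=1$, and $\|B_N\|=1$ by the very definition of $c_N$. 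From $A^m\to P$ one reads off $P^2=P$ and $AP=PA=P$, hence $Q:=A-P$ obeys $PQ=QP=0$, so $A^j=P+Q^j$ for $j\ge1$, and $Q^m=A^m-P\to0$ forces the spectral radius of $Q$ to be $<1$, i.e.\ $\|Q^m\|\le C\rho^m$ for some $\rho\in(0,1)$. Finally $G=PGP$, whence $PG=GP=G$, so $Pe^{tG}=e^{tG}P=P-I+e^{tG}$, the family $(\mathbf{\Pi}(t))_{t>0}$ is a semigroup with $\mathbf{\Pi}(0^+)=P$, and $\mathbf{\Pi}(s)A=\mathbf{\Pi}(s)$ (using $GA=G$); this last identity later absorbs the off-by-one slack coming from the floor function.

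For the matrix limit I would expand, with $k_N=\lfloor t/c_N\rfloor$ (so $k_Nc_N\to t$),
\[
\mathbf{\Pi}_N^{k_N}=(A+c_NB_N)^{k_N}=\sum_{r=0}^{k_N}c_N^r\sum_{\substack{a_0,\dots,a_r\ge0\\a_0+\cdots+a_r=k_N-r}}A^{a_0}B_NA^{a_1}B_N\cdots B_NA^{a_r}.
\]
Each summand has norm $\le1$ and the inner sum has $\binom{k_N}{r}$ terms, so the tail $\sum_{r\ge R}$ has norm $\le\sum_{r\ge R}(c_Nk_N)^r/r!\le\sum_{r\ge R}(t+1)^r/r!$ for $N$ large, uniformly in $N$, and this $\to0$ as $R\to\infty$; hence it suffices to find the limit of each $r$-th term. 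The $r=0$ term is $A^{k_N}\to P$. For fixed $r\ge1$, split the inner sum according to whether all $a_i\ge M$ or some $a_i<M$. The ``short-gap'' tuples number $O(k_N^{r-1})$, so that part is $O(c_N^rk_N^{r-1})=O(c_N)\to0$. On the complement $A^{a_i}=P+Q^{a_i}$ with $\|Q^{a_i}\|\le C\rho^M$, so multiplying out the product gives $A^{a_0}B_N\cdots A^{a_r}=(PB_N)^rP+O_r(\rho^M)$ \emph{uniformly} over that region; since $(PB_N)^rP=(PB_NP)^r\to G^r$ and $c_N^r$ times the number of such tuples tends to $t^r/r!$, the $r$-th term converges, for $M\to\infty$, to $(t^r/r!)G^r$. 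Summing over $r$ gives $\mathbf{\Pi}_N^{k_N}\to P+\sum_{r\ge1}(t^r/r!)G^r=P-I+e^{tG}$.

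To pass to finite-dimensional distributions, for $0<t_1<\cdots<t_\ell$ the Markov property expresses $\mathbb{P}(X_N(\lfloor t_i/c_N\rfloor)=s_i,\ i\le\ell)$ as $\sum_{s_0}\mathbb{P}_{X_N(0)}(s_0)$ times a product of blocks $\mathbf{\Pi}_N^{\lfloor t_i/c_N\rfloor-\lfloor t_{i-1}/c_N\rfloor}$. Since $\lfloor t_i/c_N\rfloor-\lfloor t_{i-1}/c_N\rfloor$ equals $\lfloor(t_i-t_{i-1})/c_N\rfloor$ up to an additive $0$ or $1$, and $\mathbf{\Pi}_N\to A$ with $\mathbf{\Pi}(s)A=\mathbf{\Pi}(s)$, each block converges to $\mathbf{\Pi}(t_i-t_{i-1})$ by the previous step; together with $\mathbb{P}_{X_N(0)}\to\mu$ (and the convention $\mathbf{\Pi}_N^0=I$ when $t_1=0$) this identifies the limit as the finite-dimensional law of the continuous-time Markov chain with initial law $\mu$ and semigroup $(\mathbf{\Pi}(t))_{t>0}$, whose generator is $\frac{d}{dt}\big|_{t=0^+}\mathbf{\Pi}(t)=Ge^{0\cdot G}=G$.

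The main obstacle is the combinatorial bookkeeping in the expansion: showing that among the $\binom{k_N}{r}$ placements of the $r$ factors $B_N$ only the configurations in which \emph{every} intervening power of $A$ is long enough to have effectively collapsed to $P$ contribute in the limit, quantifying the negligible ``short-gap'' exceptions, and checking that the surviving contribution is exactly $(t^r/r!)G^r$. Everything else — the spectral facts about $A$, the identities $PG=GP=G$ and $\mathbf{\Pi}(s)A=\mathbf{\Pi}(s)$, and the translation of matrix convergence into convergence of finite-dimensional distributions — is routine. An alternative route that sidesteps the explicit combinatorics is a two-time-scale blocking argument: replace $\mathbf{\Pi}_N^{m_N}$ by $P$ on an intermediate scale $1\ll m_N\ll 1/c_N$, then iterate $P\mathbf{\Pi}_NP=P+c_NPB_NP$; this carries the same analytic content.
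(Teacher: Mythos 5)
The paper does not prove this lemma at all: it is quoted verbatim from M\"ohle \cite{Mohle98a} and used as a black box, so there is no in-paper proof to compare against. Judged on its own, your argument is correct and is essentially a reconstruction of M\"ohle's original proof: the non-commutative binomial expansion of $(A+c_NB_N)^{k_N}$, the algebraic identities $P^2=P$, $AP=PA=P$, $PG=GP=G$ extracted from conditions 1--3, the geometric decay $\|Q^m\|\le C\rho^m$ of $Q=A-P$ that collapses every ``long'' block $A^{a_i}$ to $P$, the $O(c_N)$ bound on the short-gap configurations, and the identification of the $r$-th term with $(t^r/r!)G^r$ are exactly the ingredients of the separation-of-time-scales argument. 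The passage from the matrix limit to finite-dimensional distributions via $\mathbf{\Pi}(s)A=\mathbf{\Pi}(s)$, which absorbs the $\{0,1\}$-valued discrepancy between $\lfloor t_i/c_N\rfloor-\lfloor t_{i-1}/c_N\rfloor$ and $\lfloor (t_i-t_{i-1})/c_N\rfloor$, is also handled correctly. Two cosmetic points: (i) $\|B_N\|=1$ holds only in the norm used to define $c_N$; in the row-sum norm you should just say $\|B_N\|\le C$ by equivalence of norms, which is all the estimates need (and $c_N$, $B_N$, $k_N$ must of course be kept as defined in the statement, which you do); (ii) in the long-gap step it is worth stating explicitly that one first takes $\limsup_N$ for fixed $M$, obtaining an error $O_r(\rho^M)$, and only then sends $M\to\infty$ --- your sketch does this implicitly and correctly.
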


\begin{proof}[Proof of Theorem \ref{thm:res}:]
Applying Lemma~\ref{th:converge}, the strategy to prove our result is based on the decomposition of the transition matrix
$\Pi_{n,N}=(\pi_{n,N}(\xi^{'},\eta^{'}))_{\xi^{'},\eta^{'}\in\mathcal{S}_n}$. In order to have these transitions be well defined as matrices we choose a specific order of $\mathcal{S}_n:$ Namely, consider the
standard order of $\mathcal{E}_n \subset \mathcal{S}_n.$ We will then insert all remaining elements of $\mathsf{cd}^{-1}(\xi) \subset \mathcal{S}_n$ directly following any $\xi \in \mathcal{E}_n$ (the order here is fixed in an arbitrary way).
This way, the matrix ${\Pi}_{n,N}$ decomposes into sub-matrices
$(\tilde{\Pi}_{n,N}(\xi,\eta))_{{\xi},{\eta} \in\mathcal{E}_n}$ with $\tilde{\Pi}_{n,N}({\xi},{\eta})$ a $\left|  \mathsf{cd}^{-1}(\xi) \right|\times\left|{\mathsf{cd}^{-1}(\eta)}\right|$ matrix.

What we need to do is to find the decomposition such that $${\Pi}_{n,N}=A+c_NB_{n,N},$$
where $A=\lim_{N\rightarrow\infty}{\Pi}_{n,N}$ does not depend on $N$, $c_N\to0$ and $B_{n,N}$ is bounded.
In the sense of sub-matrix structure, it is necessary to find the decomposition such that
$$\tilde{\Pi}_{n,N}\({\xi},{\eta}\)=\tilde{A}\({\xi},{\eta}\)+c_N \tilde{B}_{n,N}\({\xi},{\eta}\),$$
for ${\xi},{\eta}\in\mathcal{E}_n,$ where both $\tilde{A}\({\xi},{\eta}\)$ and $\tilde{B}_{n,N}\({\xi},{\eta}\)$ are
$\left|  \mathsf{cd}^{-1}(\xi) \right|\times\left|{\mathsf{cd}^{-1}(\eta)}\right|$ sub-matrices of $A$ and $B_{n,N}.$
Set $\tilde{A}\({\xi},{\eta}\)=0$ for ${\xi}\neq{\eta}$ and
\begin{equation*}
\begin{split}
\tilde{A}\({\xi},{\xi}\)=\left(
                                                      \begin{array}{cccc}
                                                        1 & 0 & \cdots & 0 \\
                                                        \vdots &  \vdots&  & \vdots \\
                                                        1 & 0 & \cdots & 0 \\
                                                      \end{array}
                                                    \right):=P_{{\xi}}.
\end{split}
\end{equation*}
Note that this matrix maps any $\xi' \in  \mathsf{cd}^{-1}(\xi)$ to $\xi \in \mathcal{E}_n.$
It follows that
\begin{equation*}
\begin{split}
\lim_{m\rightarrow\infty}\tilde{A}^m\({\xi},{\xi}\)=\tilde{A}\({\xi},{\xi}\):=P_{{\xi}}.
\end{split}
\end{equation*}
Thus, we have
$P:=\lim_{m\rightarrow\infty}A^m$ 
with sub matrix structure $\tilde{P}({\xi},{\eta})=\mathbf{0}$ for ${\xi}\neq{\eta}$ and $\tilde{P}({\xi},{\xi})=P_{{\xi}}$.
It is easy to show that
\begin{equation*}
\begin{split}
G&:=\lim_{N\rightarrow\infty}PB_{n,N}P
=\lim_{N\rightarrow\infty}\(P_{{\xi}}B_{n,N}\({\xi},{\eta}\)P_{{\eta}}\)_{{\xi},{\eta}\in\mathcal{E}_n}\\
&=\lim_{N\rightarrow\infty}\left(\left(
                                   \begin{array}{cccc}
                                     g^{\(N\)}_{{\xi},{\eta}} & 0 & \cdots & 0 \\
                                     \vdots & \vdots  & &\vdots\\
                                     g^{\(N\)}_{{\xi},{\eta}} & 0 & \cdots & 0 \\
                                   \end{array}
                                 \right)
\right)_{{\xi},{\eta}\in\mathcal{E}_n}
\end{split}
\end{equation*}
where $g^{\(N\)}_{{\xi},{\eta}}$ is the sum of all the entries in the first row of matrix $B_{n,N}\({\xi},{\eta}\)$.
Consequently,
\begin{equation*}
\begin{split}
g^{\(N\)}_{{\xi},{\eta}}
&=\sum_{ \eta^{'}\in \mathsf{cd}^{-1}(\eta)}\frac{{\pi}_{n,N}{({\xi},\eta^{'})}}{c_N}-\frac{\delta_{{\xi},{\eta}}}{c_N}. 
\end{split}
\end{equation*}
Assume ${\eta}$ arises from ${\xi}$ by merging $j \ge 1$ groups of classes with sizes $k_1, k_2, \dots, k_j \ge 2$
and leaving $s \ge 0$ singleton classes ($b=k_1+\cdots+k_j+s$). Applying Lemma \ref{lem:piNcd}, we have $$\lim_{N\rightarrow\infty}g^{\(N\)}_{{\xi},{\eta}}=\lambda_{b;k_1,\dots,k_j;s}.$$
Note that a transition from any given state $\eta^{'} \in \mathcal{S}_n$ with $a\leq n$ classes (as in  (\ref{eta})) to its complete dispersion state $\mathsf{cd}(\eta^{'})$ happens whenever none of the ancestral genes of distinct ancestral individuals
in configurations $\eta^{'}$ have a common parental ancestral gene in the previous generation.
(Ancestral genes of the same individual naturally have distinct parental ancestral genes as we have excluded selfing.)
For any pair of such ancestral genes the probability to have a common parental ancestral gene is $c_N$ and
 there are at most $\binom{a}{2}$ such pairs 
 to consider. Thus, the transition probability satisfies
\begin{equation*}
\pi_{n,N}{(\eta^{'},\mathsf{cd}(\eta^{'}))}
\geq 1-{a\choose 2}c_N.
\end{equation*}
It is clear that $\pi_{n,N}{(\eta^{'} ,\mathsf{cd}(\eta^{'}))}\rightarrow 1$ as $N\rightarrow\infty$ or in other words that
$A=\lim_{N\rightarrow\infty}{\Pi}_{n,N}.$
Hence, complete dispersion happens instantaneously in the limit of the genealogical process for the diploid population model.
By eliminating all those instantaneous states, we can get an $\mathcal{E}_n$-valued marginal process $\({R}_n\(t\)\)_{t\geq 0}$
whose generator is given by $\(\lim_{N\rightarrow\infty}g^{\(N\)}_{{\xi},{\eta}}\)_{{\xi},{\eta}\in\mathcal{E}_n}$. The process $\({R}_{n}\(t\)\)_{t\geq 0}$ is exactly the {$n$}-$\Xi$-coalescent process.
\end{proof}


\begin{proof}[Proof of Corollary \ref{cor:res}:]
Our argument is essentially borrowed from the proof of Theorem~3.1 in M\"ohle\ \cite{Mohle99}.
Theorem~\ref{thm:res} yields 
that the finite-dimensional distributions of
\[ \big( \widetilde{\xi}^{n,N}( \lfloor t/c_N \rfloor) \big)_{t\ge0} =
\big( \mathsf{cd}\big(\xi^{n,N}( \lfloor t/c_N \rfloor) \big) \big)_{t\ge0} \]
converge to those of $\xi^n$. Thus, in order to strengthen this to weak convergence on
the path space $D([0,\infty), \mathcal{E}_n)$ we only have to verify tightness
there. Since $\mathcal{E}_n$ is finite (and so in particular compact)
and $\widetilde{\xi}^{n,N}$ can by construction only move by mergers,
it suffices to check that in the limit $N\to\infty$, the jump times of
$\big( \mathsf{cd}\big(\xi^{n,N}( \lfloor \cdot/c_N \rfloor)\big) \big)$ do not accumulate
(see e.g.\ \cite[Thm.~6.2 in Ch.~3]{EK86} or \cite{Billing}).
Noting that for any $\xi \in \mathcal{S}_n$
\begin{align*}
  & \PP\Big(\mathsf{cd}\big(\xi^{n,N}(m+1)\big) \neq \mathsf{cd}\big(\xi\big) \, \Big| \, \xi^{n,N}(m) = \xi \Big) \\
  =& \PP\Big( \, \parbox{19em}{at least one pair of genes (necessarily from \\distinct individuals) merges in the next step} \: \Big| \, \xi^{n,N}(m) = \xi \Big) 
  \le {n \choose 2} c_N
\end{align*}
we see that the times between jumps of $\big( \widetilde{\xi}^{n,N}(m)\big)_{m \in \NN_0}$ are stochastically
larger than independent geometric random variables with success parameter $c_N n(n-1)/2$ which
after time scaling converge in distribution to independent exponentials with rate $n(n-1)/2$.
\end{proof}


\subsection{Proof of Proposition~\ref{Exrff:prop1}}
\label{sProofRandomIndFitness}

In this section we prove Lemma~\ref{Exrff:lem:cN} and Proposition~\ref{Exrff:prop1},  the main convergence result for the
diploid population model with random individual fitness of Section~\ref{Section:Exrff}. Apart from Lemma~\ref{Exrff:lem:cN}
we need two auxiliary lemmas for the proof of Proposition~\ref{Exrff:prop1}. Their proofs are postponed to Section \ref{sAuxiliary}.

Note that while the set-up in Proposition~\ref{Exrff:prop1} is quite similar in spirit to that from
\cite{Schweinsberg2003}, the
adaptation 
poses some additional technical difficulties. In particular, observe that
$Z_N$, the normalising constant in the representation
\eqref{Exrff:eq:lawVij} of the law of $(V_{i,j})$ as a mixture of
multinomials is not  literally an i.i.d.\ sum as in
\cite{Schweinsberg2003}.

\smallskip
Recall from (\ref{Exrff:defZN}) and (\ref{Exrff:eq:lawVij}) that in the random individual fitness model
$V_{i,j},1\le i < j \le N$ are multinomial with $N$ trials and with success probabilities of individuals $i$ and $j$ proportional to
$W_i W_j$ where the fitness parameters $W_i$ are independent copies of $W$ with mean $\mu_W.$
\begin{lemma}
  \label{Exrff:lem:highmombd}
  1.\ If $\mu_W^{(2)} = \EE\left[ W^2 \right] < \infty$ we have
  \begin{align}
    \label{Exrff:eq:phi13zero}
    \phi_1(3) = \lim_{N\to\infty} \frac{\EE\big[ (V_1)_{3} \big]}{ c_N N^2} = 0.
  \end{align}
  2.\ If \eqref{Exrff:eq:tailassumptW1} holds we have
  \begin{align}
    \label{Exrff:eq:phi22zero}
    \phi_2(2,2) = \lim_{N\to\infty} \frac{\EE\big[ (V_1)_{2} (V_2)_{2}
      \big]}{c_N N^2} = 0.
  \end{align}
\end{lemma}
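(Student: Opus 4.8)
The plan is to condition on the fitness vector $(W_i)_{i\ge 1}$, write the two factorial moments explicitly in terms of $Q_N^{(i)}:=\sum_{j\neq i}W_iW_j/Z_N$ (the probability that a given child descends from parent~$i$; thus $Q_N=Q_N^{(1)}$) and $R_N:=W_1W_2/Z_N$, and then compare these with $\EE[Q_N^2]=8c_N/N$, using the asymptotics of $c_N$ from Lemma~\ref{Exrff:lem:cN}.

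\emph{Part 1.} Conditionally on $(W_i)$ one has $V_1\sim\mathrm{Bin}(N,Q_N)$, hence $\EE[(V_1)_3]=(N)_3\EE[Q_N^3]$ and $\EE[(V_1)_3]/(c_NN^2)=8(N)_3\EE[Q_N^3]/(N^3\EE[Q_N^2])$. By part~1 of Lemma~\ref{Exrff:lem:cN}, $\EE[Q_N^2]=8c_N/N$ is of exact order $N^{-2}$ with a positive constant (since $\mu_W^{(2)}\ge\mu_W^2>0$), so it suffices to prove $\EE[Q_N^3]=o(N^{-2})$. Writing $Q_N=W_1/(W_1+\rho_N)$ with $\rho_N:=\widetilde{Z}_N/\widetilde{S}_N$, $\widetilde{S}_N:=\sum_{j\ge2}W_j$, $\widetilde{Z}_N:=\sum_{2\le i<j\le N}W_iW_j$ — a function of $W_2,\dots,W_N$ only — one has $\rho_N\ge\frac12(\widetilde{S}_N-\max_{j\ge2}W_j)$ because $\sum_{j\ge2}W_j^2\le\widetilde{S}_N\max_{j\ge2}W_j$. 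Truncating $W$ at a large constant and applying Hoeffding's inequality to the truncated i.i.d.\ sum shows that $\PP(\rho_N<\frac14\mu_WN)$ decays faster than any power of $N$ (this uses only $W\ge0$ and $\mu_W<\infty$), and on the complement $Q_N\le\min(1,4W_1/(\mu_WN))$. Therefore
\[
\EE[Q_N^3]\;\le\;\PP\!\Big(W_1>\tfrac{\mu_W}{4}N\Big)+\frac{64}{\mu_W^3N^3}\,\EE\!\Big[W_1^3\,\mathbf{1}\{W_1\le\tfrac{\mu_W}{4}N\}\Big]+\PP\!\Big(\rho_N<\tfrac14\mu_WN\Big).
\]
The first term is $o(N^{-2})$ since $N^2\PP(W>cN)\le c^{-2}\EE[W^2\mathbf{1}\{W>cN\}]\to0$; the middle term is $o(N^{-2})$ since $\EE[W^3\mathbf{1}\{W\le a\}]/a=\EE[W^2(W/a\wedge1)\mathbf{1}\{W\le a\}]\to0$ by dominated convergence; the last term is negligible. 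Hence $\phi_1(3)=0$.

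\emph{Part 2.} Sorting the $N$ children according to whether they descend from parent~$1$ only, parent~$2$ only, or exactly from the couple $\{1,2\}$ gives a multinomial decomposition; using the identity for mixed falling factorials of a multinomial vector (Lemma~\ref{lem:Multinomfactorialmoments}) one obtains
\[
\EE[(V_1)_2(V_2)_2]=(N)_4\,\EE\big[(Q_N^{(1)})^2(Q_N^{(2)})^2\big]+4(N)_3\,\EE\big[Q_N^{(1)}Q_N^{(2)}R_N\big]+2(N)_2\,\EE[R_N^2].
\]
Since $R_N\le Q_N^{(1)}\wedge Q_N^{(2)}\le1$ we have $\EE[Q_N^{(1)}Q_N^{(2)}R_N]\le\EE[Q_N^{(1)}Q_N^{(2)}]$ and $\EE[R_N^2]\le\EE[Q_N^{(1)}Q_N^{(2)}]$, while from $\sum_iV_i=2N$ together with Lemma~\ref{lem:cN} a short computation yields $\EE[Q_N^{(1)}Q_N^{(2)}]=(4-8c_N)/(N(N-1))=O(N^{-2})$. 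Because $c_NN^2\sim C_\mathrm{pair}^{(\mathrm{Beta})}N^{3-\alpha}$ by part~2 of Lemma~\ref{Exrff:lem:cN} and $\alpha<2$, the second and third terms contribute $O(N^{\alpha-2})$ and $O(N^{\alpha-3})$ to $\EE[(V_1)_2(V_2)_2]/(c_NN^2)$, which vanish. The crux is the first term: I would show $\EE[(Q_N^{(1)})^2(Q_N^{(2)})^2]=O(N^{-2\alpha})$, which (since $\alpha>1$) is $o(N^{-1-\alpha})$ and makes $(N)_4\EE[(Q_N^{(1)})^2(Q_N^{(2)})^2]/(c_NN^2)=O(N^{1-\alpha})\to0$ — this is exactly where the hypothesis $\alpha>1$ is used. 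On the event $\Omega_N:=\{\Sigma_N-\max_{j\ge3}W_j\ge\frac12\mu_WN\}$ with $\Sigma_N:=\sum_{j\ge3}W_j$, which depends only on $W_3,\dots,W_N$ and whose complement has super-polynomially small probability (same truncation/Hoeffding argument), one has $Z_N\ge\sum_{3\le i<j\le N}W_iW_j+W_2\Sigma_N\ge\Sigma_N(\frac14\mu_WN+W_2)$, hence
\[
Q_N^{(1)}=\frac{W_1(W_2+\Sigma_N)}{Z_N}\le\frac{W_1(W_2+\Sigma_N)}{\Sigma_N(\tfrac14\mu_WN+W_2)}\le\frac{6W_1}{\mu_WN},
\]
and by symmetry $Q_N^{(2)}\le6W_2/(\mu_WN)$, so $Q_N^{(i)}\le\min(1,6W_i/(\mu_WN))$ on $\Omega_N$. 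As $\Omega_N$ is independent of $(W_1,W_2)$ and $W_1,W_2$ are independent,
\[
\EE\big[(Q_N^{(1)})^2(Q_N^{(2)})^2\mathbf{1}_{\Omega_N}\big]\le\Big(\EE\big[\min(1,(6/\mu_W)^2W^2/N^2)\big]\Big)^2=O(N^{-2\alpha}),
\]
using $\EE[\min(1,cW^2/N^2)]=\PP(W>N/\sqrt c)+\tfrac{c}{N^2}\EE[W^2\mathbf{1}\{W\le N/\sqrt c\}]=O(N^{-\alpha})$ by \eqref{Exrff:eq:tailassumptW1} and Karamata's theorem ($\EE[W^2\mathbf{1}\{W\le a\}]\sim\tfrac{\alpha c_W}{2-\alpha}a^{2-\alpha}$); the contribution of $\Omega_N^c$ is at most $\PP(\Omega_N^c)$, which is negligible. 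Hence $\phi_2(2,2)=0$.

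The main obstacle is the uniform lower control of the normalising constant $Z_N$ (and, in Part~1, of $\rho_N$): one must rule out that $Z_N$ is anomalously small across all configurations of the heavy-tailed $W_j$, which is precisely what the truncation-plus-concentration estimate for $\Sigma_N-\max_{j\ge3}W_j$ provides; once the bound $Q_N^{(i)}\le\min(1,CW_i/N)$ holds on a super-likely event independent of $(W_1,W_2)$, the rest reduces to the elementary regularly varying moment estimates above.
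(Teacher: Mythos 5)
Your proof is correct, and while Part~1 follows essentially the same route as the paper (rewrite $Q_N=W_1/(W_1+\rho_N)$, push $\rho_N$ above $c\mu_W N$ off an exponentially unlikely event via truncation and a Cram\'er/Hoeffding bound, then estimate $\EE[W^3/(W+cN)^3]$ -- the paper delegates this last step to Schweinsberg's Proposition~7, whereas you do it elementarily by splitting at $W\approx N$ and dominated convergence), your Part~2 is genuinely different and noticeably leaner. The paper expands $(V_1)_2(V_2)_2$ in terms of $V_{1,2},V',V''$ into a twelve-term identity (its \eqref{Exrff:eq:EV12V22}) and then bounds each term separately using the Beta-type moment asymptotics of Lemma~\ref{Exrff:lemWdbWpMmom} and the lower bounds on $Z_N$ from Lemma~\ref{Exrff:SNtooextreme}. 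You instead group the conditional expectation into only three terms indexed by the overlap pattern of the two ordered pairs, $(N)_4\EE[(Q_N^{(1)})^2(Q_N^{(2)})^2]+4(N)_3\EE[Q_N^{(1)}Q_N^{(2)}R_N]+2(N)_2\EE[R_N^2]$ (which I checked agrees term-by-term with \eqref{Exrff:eq:EV12V22} after expanding), dispose of the last two via the clean identity $\sum_iQ_N^{(i)}=2$ combined with $\EE[(Q_N^{(1)})^2]=8c_N/N$, and reduce the dominant term to the product bound $Q_N^{(i)}\le\min(1,6W_i/(\mu_W N))$ on an event independent of $(W_1,W_2)$, finishing with Karamata rather than Lemma~\ref{Exrff:lemWdbWpMmom}. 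What your approach buys is a shorter, more transparent computation that isolates exactly where $\alpha>1$ enters (namely $N^{-2\alpha}=o(N^{-1-\alpha})$); what the paper's buys is sharper information, since its term-by-term analysis yields the quantitative bound $\EE[(V_1)_2(V_2)_2]=O(N^{4-2\alpha})$ with identified constants rather than just the $o(c_NN^2)$ needed here. The only blemish is a citation slip: the identity $\EE[Q_N^2]=8c_N/N$ is \eqref{Exrff:eq:cNformula} of Lemma~\ref{Exrff:lem:cN}, not Lemma~\ref{lem:cN}.
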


\begin{lemma}
  \label{Exrff:lem:V1tail}
  If \eqref{Exrff:eq:tailassumptW1} holds we have
  \begin{align}
    \label{Exrff:eq:V1asympttail}
    \frac{N}{c_N} \mathbb{P}(V_1 > N x) \mathop{\longrightarrow}_{N\to\infty}
    8 \int_x^1 \frac{1}{y^2} \, \mathrm{Beta}(2-\alpha,\alpha)(dy) \quad \text{for } x \in (0,1).
  \end{align}
\end{lemma}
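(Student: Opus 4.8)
The plan is to reduce the statement to the tail asymptotics of the quantity $Q_N$ from \eqref{def:QN} and then to establish those by a ``single big jump'' argument in which one $W_i$, say $W_1$, is of order $N$ while $W_2,\dots,W_N$ stay typical. Conditionally on $(W_i)_{1\le i\le N}$ each of the $N$ children descends from parent $1$ with probability $Q_N$, independently, so $V_1$ is $\mathrm{Bin}(N,Q_N)$-distributed. Fix $x\in(0,1)$ and $\delta>0$ with $x\pm\delta\in(0,1)$; using that $\mathrm{Bin}(N,q)$ is stochastically increasing in $q$ together with Chernoff bounds of the form $\PP(\mathrm{Bin}(N,q)\ge Nx)\le e^{-NI}$ (and the analogue for lower tails), with $I=I(x,q)>0$ for $q\neq x$, one gets a $c=c(x,\delta)>0$ with
\begin{align*}
\PP(Q_N>x+\delta)-e^{-cN}\le\PP(V_1>Nx)\le\PP(Q_N>x-\delta)+e^{-cN}.
\end{align*}
By Lemma~\ref{Exrff:lem:cN}, $N/c_N\sim N^{\alpha}/C_{\mathrm{pair}}^{(\mathrm{Beta})}$ grows only polynomially, so the $e^{-cN}$ terms are negligible after multiplication by $N/c_N$. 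Hence it suffices to prove that for every $y\in(0,1)$
\begin{align}
\frac{N}{c_N}\,\PP(Q_N>y)\ \mathop{\longrightarrow}_{N\to\infty}\ h(y):=\frac{8}{\alpha\,B(2-\alpha,\alpha)}\,(1-y)^{\alpha}y^{-\alpha},
\label{plan:Qtail}
\end{align}
since $h$ is continuous on $(0,1)$ (so we may then let $\delta\downarrow0$), and to note the elementary identity $h(y)=8\int_y^1 w^{-2}\,\mathrm{Beta}(2-\alpha,\alpha)(dw)$, which follows from $\int_y^1 w^{-1-\alpha}(1-w)^{\alpha-1}\,dw=\tfrac1\alpha(1-y)^{\alpha}y^{-\alpha}$ (antiderivative $-\tfrac1\alpha(1-w)^{\alpha}w^{-\alpha}$) and $B(2-\alpha,\alpha)=\Gamma(2-\alpha)\Gamma(\alpha)$.

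To prove \eqref{plan:Qtail}, set $T_N':=\sum_{j=2}^N W_j$ and $Z_N':=\sum_{2\le i<j\le N}W_iW_j$, so that $Z_N=W_1T_N'+Z_N'$ by \eqref{Exrff:defZN} and hence $Q_N=W_1T_N'/(W_1T_N'+Z_N')$, giving
\begin{align*}
\{Q_N>y\}=\Bigl\{W_1>\tfrac{y}{1-y}\,\tfrac{Z_N'}{T_N'}\Bigr\},
\end{align*}
with $T_N',Z_N'$ independent of $W_1$. By the weak law of large numbers $T_N'/N\to\mu_W$, and since $2Z_N'=(T_N')^2-\sum_{j\ge2}W_j^2$ with $\sum_{j\ge2}W_j^2=o(N^2)$ in probability (an i.i.d.\ sum with regularly varying tail of index $\alpha/2<1$), also $Z_N'/N^2\to\mu_W^2/2$, so $Z_N'/T_N'\sim N\mu_W/2$ in probability. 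On $G_N:=\{|T_N'/N-\mu_W|<\varepsilon\}\cap\{|Z_N'/N^2-\mu_W^2/2|<\varepsilon\}$ the ratio $Z_N'/T_N'$ lies in $N\cdot(r_-(\varepsilon),r_+(\varepsilon))$ with $r_\pm(\varepsilon)\to\mu_W/2$; conditioning on $\mathcal{G}_N:=\sigma(W_2,\dots,W_N)$, using independence and monotonicity, and then $\PP(W_1>cN)\sim c_W(cN)^{-\alpha}$ from \eqref{Exrff:eq:tailassumptW1}, one obtains (letting $N\to\infty$ for fixed $\varepsilon$ and then $\varepsilon\downarrow0$)
\begin{align*}
N^{\alpha}\,\PP\bigl(\{Q_N>y\}\cap G_N\bigr)\ \mathop{\longrightarrow}_{N\to\infty}\ c_W\Bigl(\tfrac{y\mu_W}{2(1-y)}\Bigr)^{-\alpha}=c_W\bigl(\tfrac{2}{\mu_W}\bigr)^{\alpha}\bigl(\tfrac{1-y}{y}\bigr)^{\alpha}.
\end{align*}
Multiplying by $N^{1-\alpha}/c_N\to 1/C_{\mathrm{pair}}^{(\mathrm{Beta})}$ and inserting $C_{\mathrm{pair}}^{(\mathrm{Beta})}=c_W(2/\mu_W)^{\alpha}\alpha B(2-\alpha,\alpha)/8$ from \eqref{Exrff:eq:cNBeta} reproduces the right-hand side of \eqref{plan:Qtail}, the factor $c_W(2/\mu_W)^{\alpha}$ cancelling.

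The remaining, and I expect hardest, step is to show $\PP(\{Q_N>y\}\cap G_N^{\,c})=o(N^{-\alpha})$. This is delicate because $\PP(G_N^{\,c})$ is only $O(N^{1-\alpha})$ — $T_N'$ or $Z_N'$ can be atypically large due to a big jump among $W_2,\dots,W_N$ — so a crude bound does not suffice. The remedy is the deterministic inequality $2Z_N'=(T_N')^2-\sum_{j\ge2}W_j^2\ge (T_N')^2-W'_{\max}T_N'=T_N'(T_N'-W'_{\max})$ with $W'_{\max}:=\max_{2\le j\le N}W_j$, which forces $Q_N\le W_1/(W_1+\tfrac12 S_N)$ and hence $\{Q_N>y\}\subseteq\{W_1>\tfrac{y}{2(1-y)}S_N\}$ for $S_N:=T_N'-W'_{\max}$. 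Splitting on $\{S_N>\delta'N\}$ versus $\{S_N\le\delta'N\}$ for a small constant $\delta'>0$: on the former $\{Q_N>y\}$ forces $W_1>\tfrac{y\delta'}{2(1-y)}N$, an event of probability $O(N^{-\alpha})$ independent of $G_N^{\,c}$, so its intersection with $G_N^{\,c}$ has probability $O(N^{-\alpha})\PP(G_N^{\,c})=o(N^{-\alpha})$; on the latter one uses $S_N\ge\sum_{j=2}^N\min(W_j,K)-K$ for any constant $K$ and that $\sum_{j=2}^N\min(W_j,K)$ is a sum of bounded i.i.d.\ with mean $\EE[\min(W,K)]\uparrow\mu_W$, so for $K$ large and $\delta'$ small Hoeffding's inequality gives $\PP(S_N\le\delta'N)\le e^{-c'N}=o(N^{-\alpha})$. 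Combining these with the limit on $G_N$ yields \eqref{plan:Qtail}, and with the reduction above the lemma follows. (On the event $Z_N=0$, i.e.\ at most one $W_j$ is positive, which has exponentially small probability since $\PP(W=0)<1$, one simply sets $Q_N:=0$.)
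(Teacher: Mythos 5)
Your proof is correct and follows essentially the same route as the paper's: reduce to the tail of $Q_N$ via binomial large-deviation bounds, write $Q_N = W_1/(W_1 + Z_N'/T_N')$ with the ratio concentrating at $\mu_W N/2$, and exploit independence of $W_1$ from the rest to apply the tail assumption \eqref{Exrff:eq:tailassumptW1}. The only cosmetic difference is that you control the atypical event via the deterministic inequality $2Z_N' \ge T_N'(T_N'-W'_{\max})$ plus a Hoeffding bound on the truncated sum, whereas the paper invokes the pre-packaged estimate \eqref{Exrff:eq:SN2tooextreme} of Lemma~\ref{Exrff:SNtooextreme}, which rests on the same truncation idea.
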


\begin{proof}[Proof of Proposition~\ref{Exrff:prop1}] 1.\ The scaling of the pair coalescence
  probability $c_N$ is given in \eqref{Exrff:eq:cNKingman} in Lemma~\ref{Exrff:lem:cN}.

  Using Theorem~\ref{thm:res} we should verify that $\mu_W^{(2)} < \infty$ implies that (cf.\ Condition~\eqref{eq:Vfmcond})
  for all $j \in \NN$ and $k_1,\dots,k_j \ge 2$
  \begin{align}
    \label{Exrff:eq:VkmomlimitKingman}
    \lim_{N\rightarrow\infty}
    \frac1{c_N} \frac{\EE\big[\(V_1\)_{k_1}\cdots\(V_j\)_{k_j}\big]}{N^{k_1+\cdots+k_j-j}2^{k_1+\cdots+k_j}}
    = 2 \ind_{\{j=1,k_1=2\}} .
  \end{align}
  For $j=1, k_1=2$ this follows from the fact that $c_N = \EE[V_1 (V_1-1)]/(8(N-1))$ (see Lemma \ref{lem:cN});
  for $j=1, k_1=3$ it follows from Lemma~\ref{Exrff:lem:highmombd}; it is well known that the latter
  implies that \eqref{Exrff:eq:VkmomlimitKingman} also holds for $j=1$, $k_1>3$ and for $j\ge 2$
  (see \cite{Mohle2001} and \cite{Sagitov2003}).
  \medskip

  \noindent 2.\ In this case, the scaling of $c_N$ is given by \eqref{Exrff:eq:cNBeta}
  in Lemma~\ref{Exrff:lem:cN}. To verify the claimed form of the limiting coalescent
  we should check that the probability measure $\Xi'$ on $\Delta$ appearing in \eqref{eq:PhiNconv} and \eqref{eq:Vfmrel}
  is given by
  \begin{align*}
  \Xi'(A) = \int_0^1 \ind_A(y/2,0,0,\dots) \, \frac{y^{1-\alpha} (1-y)^{\alpha-1}}{\Gamma(2-\alpha) \Gamma(\alpha)} \, dy,
  \end{align*}
  i.e.\ $\Xi'$ is the image measure of $\mathrm{Beta}(2-\alpha,\alpha)$ under the mapping $[0,1] \ni x \mapsto (x/2,0,0,\dots)
  \in \Delta$.

  It is known that \eqref{Exrff:eq:phi22zero} from
  Lemma~\ref{Exrff:lem:highmombd} implies that the vague limit measure
  of $\Phi_N$ from \eqref{eq:PhiNconv} is concentrated on
  $\widetilde\Delta := \{ (x_1,x_2,\dots) \in \Delta: x_2=0\}$, see
  \cite[Cor.~2.1]{Sagitov2003} (one can view $\widetilde\Delta$ as the
  canonical embedding of $[0,1]$ into $\Delta$) so it suffices to observe
  that for any $x \in (0,1/2)$ by Lemma~\ref{Exrff:lem:V1tail}
  \begin{align*}
    \lim_{N\to\infty} & \frac{1}{c_N} \PP\big( V_{(1)} > 2N x \big) =
    \lim_{N\to\infty} \frac{N}{c_N} \PP\big( V_1 > 2N x \big) = 8 \int_{2x}^1 \frac{1}{y^2} \, \mathrm{Beta}(2-\alpha,\alpha)(dy)
    \notag \\
    & = 2 \int_{2x}^1 \frac{1}{(y/2)^2} \, \mathrm{Beta}(2-\alpha,\alpha)(dy)
    = 2 \int_0^1 \ind_{A(x)}(y/2,0,0,\dots) \frac{1}{y^2/4} \, \frac{y^{1-\alpha} (1-y)^{\alpha-1}}{\Gamma(2-\alpha) \Gamma(\alpha)} \, dy
  \end{align*}
  with $A(x) = \{ (y_1,y_2,\dots) \in \Delta : y_1 > x, y_2=0 \}$.
\end{proof}

\subsubsection{Proofs of auxiliary results}
\label{sAuxiliary}
Here, we provide some details of the proofs of
Lemmas~\ref{Exrff:lem:cN}, \ref{Exrff:lem:highmombd} and
\ref{Exrff:lem:V1tail}.

\begin{lemma}
  \label{Exrff:lemWdbWpMmom}
  Assume that $W \ge 0$ satisfies the tail assumption \eqref{Exrff:eq:tailassumptW1}. For $k \in \{2,3,4,\dots\}$,
  we have
  \begin{align}
    \label{Exrff:WdbWpMmom}
    \lim_{M\to\infty} M^{\alpha} \EE\Big[ \frac{W^k}{(W+M)^k} \Big] = c_W \alpha B(k-\alpha,\alpha).
  \end{align}
  \end{lemma}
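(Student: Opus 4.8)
The plan is to express the expectation as an integral against the tail function of $W$, rescale space by the factor $M$, and then pass to the limit under the integral by dominated convergence, using the regular variation of the tail.

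First I would set $f_M(w) := \big(w/(w+M)\big)^k = w^k/(w+M)^k$, which is nondecreasing in $w \ge 0$ with $f_M(0) = 0$ and derivative $f_M'(w) = kMw^{k-1}/(w+M)^{k+1} \ge 0$. Since $f_M' \ge 0$, Tonelli's theorem gives
\[
\EE\big[ f_M(W) \big] = \int_0^\infty f_M'(w)\, \PP(W>w)\, dw = \int_0^\infty \frac{kMw^{k-1}}{(w+M)^{k+1}}\, \PP(W>w)\, dw .
\]
Substituting $w = Mu$ and multiplying through by $M^\alpha$ then yields the clean representation
\[
M^\alpha\, \EE\big[ f_M(W) \big] = \int_0^\infty \frac{k u^{k-1}}{(1+u)^{k+1}}\; M^\alpha\, \PP(W>Mu)\, du .
\]

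Next I would let $M\to\infty$ inside the integral. For each fixed $u>0$, the hypothesis $\PP(W\ge x) \sim c_W x^{-\alpha}$ gives $M^\alpha\,\PP(W>Mu) = u^{-\alpha}\,(Mu)^\alpha\,\PP(W>Mu) \to c_W u^{-\alpha}$. For the domination, observe that $C_1 := \sup_{x>0} x^\alpha \PP(W>x)$ is finite (it is at most $1$ on $(0,1]$, and on $[1,\infty)$ it is bounded since it converges to $c_W$), so $M^\alpha\,\PP(W>Mu) \le C_1 u^{-\alpha}$ uniformly in $M$ and $u$; since $k \ge 2 > \alpha$, the majorant $u \mapsto \frac{k u^{k-1}}{(1+u)^{k+1}} u^{-\alpha} = \frac{k u^{k-1-\alpha}}{(1+u)^{k+1}}$ is integrable on $(0,\infty)$ — integrable at $0$ because $k-1-\alpha > -1$, and at $\infty$ because it decays like $u^{-2-\alpha}$. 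Dominated convergence therefore yields
\[
\lim_{M\to\infty} M^\alpha\, \EE\big[ f_M(W) \big] = c_W \int_0^\infty \frac{k u^{k-1-\alpha}}{(1+u)^{k+1}}\, du .
\]

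Finally I would evaluate this integral with the standard identity $\int_0^\infty u^{s-1}(1+u)^{-(s+t)}\, du = B(s,t)$, valid for $s,t>0$, applied with $s = k-\alpha$ and $t = 1+\alpha$ (both positive as $k\ge 2$), giving $k\,B(k-\alpha,1+\alpha)$, and then simplify using $\Gamma(1+\alpha) = \alpha\Gamma(\alpha)$ and $\Gamma(k+1) = k\Gamma(k)$:
\[
k\,B(k-\alpha,1+\alpha) = k\,\frac{\Gamma(k-\alpha)\,\alpha\,\Gamma(\alpha)}{k\,\Gamma(k)} = \alpha\,\frac{\Gamma(k-\alpha)\Gamma(\alpha)}{\Gamma(k)} = \alpha\,B(k-\alpha,\alpha),
\]
which is the asserted limit. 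The only genuinely delicate point is the interchange of limit and integral near $u=0$, where both $M^\alpha\PP(W>Mu)$ and its pointwise limit $c_W u^{-\alpha}$ blow up; this is controlled precisely by the uniform tail bound $M^\alpha\PP(W>Mu) \le C_1 u^{-\alpha}$ together with the integrability of $u^{k-1-\alpha}/(1+u)^{k+1}$ at the origin, which is exactly where the assumption $k \ge 2$ is used (in fact $k > \alpha$ would suffice).
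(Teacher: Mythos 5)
Your proof is correct and follows essentially the same route as the paper's: both start from the tail-integral representation $\EE[g(W)]=\int_0^\infty g'(x)\,\PP(W\ge x)\,dx$ with $g(x)=x^k/(x+M)^k$ and reduce the limit to the Beta integral $B(k-\alpha,\alpha+1)$ after a change of variables. The only difference is in how the limit is interchanged with the integral: you use dominated convergence with the uniform bound $\sup_{x>0}x^\alpha\PP(W>x)<\infty$, whereas the paper truncates at a level $L$ and sandwiches the tail between $(1\pm\varepsilon)c_Wx^{-\alpha}$; both are valid.
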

This is a small variation on Lemma~12 from \cite{Schweinsberg2003},
addressing the case $k=2$ and $W$ integer-valued. For a rough idea
why the asymptotic decay rate of $\EE\big[W^k/(M+W)^k\big]$ is
$M^{-\alpha}$ note that on the event $\{W \ge M\}$, which has
probability $\sim c_W M^{-\alpha}$, the integrand is almost constant.
\begin{proof}
  For any bounded monotone $g \in C^1([0,\infty))$ with $g(0)=0$  we have
  \begin{align*}
    \EE[g(W)] & = \EE\Big[ \int_0^\infty \ind(x \le W) g'(x) \,dx \Big]
    = \int_0^\infty g'(x) \PP(W \ge x) \, dx.
  \end{align*}
  Applying this with $g(x) = x^k/(x+M)^k$ hence $g'(x) = k M x^{k-1}/(x+M)^{k+1}$ we obtain
  \begin{align}
    \label{Exrff:lemWdbWpMmom.e1}
    \EE\Big[ \frac{W^k}{(W+M)^k} \Big] =  \int_0^\infty g'(x) \PP(W \ge x) \, dx = \int_0^\infty M k \frac{x^{k-1}}{(x+M)^{k+1}} \PP(W \ge x) \, dx.
  \end{align}
  For every $L>0$ we have
  \begin{align}
    \label{Exrff:lemWdbWpMmom.e2}
    \limsup_{M\to\infty} M^\alpha \int_0^L  g'(x) \PP(W \ge x) \, dx 
    & \le \limsup_{M\to\infty} M^\alpha \int_0^L g'(x) \, dx \notag \\
    & = \lim_{M\to\infty} M^\alpha \frac{L^k}{(L+M)^k} = 0,
  \end{align}
  using that $\alpha < 2$ and that $k \geq 2.$
  Furthermore,
  \begin{align*}
    \int_L^\infty \frac{x^{k-1}}{(x+M)^{k+1}} x^{-\alpha} \, dx
    & = \int_0^{M/(M+L)} \( \frac{M(1-y)}{y} \)^{k-1-\alpha} \Big( \frac{y}{M}\Big)^{k+1} \, M y^{-2} \, dy \\
    & = M^{-\alpha-1} \int_0^{M/(M+L)} (1-y)^{k-1-\alpha} y^{\alpha} \, dy
  \end{align*}
  (we substituted $y=M/(M+x)$, hence $x=M(1-y)/y$, $dx/dy = - M y^{-2}$ for the first equation).
  Thus,
  \begin{align}
    \label{Exrff:lemWdbWpMmom.e3}
    \lim_{M\to\infty} M^\alpha \int_0^\infty M k \frac{x^{k-1}}{(x+M)^{k+1}} x^{-\alpha} \, dx
    & = k \int_0^1 (1-y)^{k-1-\alpha} y^{\alpha} \, dy \notag \\
    & = \frac{k \Gamma(\alpha+1) \Gamma(k-\alpha)}{\Gamma(k+1)}
    = \alpha \frac{\Gamma(\alpha) \Gamma(k-\alpha)}{\Gamma(k)} = \alpha B(k-\alpha,\alpha)
  \end{align}

  For $\varepsilon > 0$ we can choose $L$ so large that
  $(1-\varepsilon) c_W x^{-\alpha} \le \PP(W \ge x) \le
  (1+\varepsilon) c_W x^{-\alpha}$ holds for all $x \ge L$. Combining
  \eqref{Exrff:lemWdbWpMmom.e1}--\eqref{Exrff:lemWdbWpMmom.e3} we see that
  \begin{align*}
    \limsup_{M\to\infty} M^\alpha \EE\Big[ \frac{W^k}{(W+M)^k} \Big] \le (1+\varepsilon) c_W \alpha B(k-\alpha,\alpha)
  \end{align*}
  and similarly for the $\liminf$. \eqref{Exrff:WdbWpMmom} follows by taking $\varepsilon \downarrow 0$.
\end{proof}

  Defining
  \begin{align}
    \label{Exrff:defSN.SN2}
    S_{k,N} := \sum_{j=k}^N W_j, \quad S^{(2)}_{k,N} := \sum_{j=k}^N
    W^2_j
  \end{align}
  (for $2\le  k < N$)
  we can re-express $Z_N$ from \eqref{Exrff:defZN} as
  \begin{align}
    \label{Exrff:eq:ZNre-expr.2}
    Z_N & = W_1 S_{2,N} + \frac12 \big( (S_{2,N})^2 - S^{(2)}_{2,N} \big) \\
    \intertext{and also as}
    \label{Exrff:eq:ZNre-expr}
    Z_N & = \frac{1}{2} \big(W_1+W_2+S_{3,N}\big)^2 - \frac12 W_1^2 -
    \frac12 W_2^2 - \frac12 S^{(2)}_{3,N} \notag \\
    & = W_1W_2 + (W_1+W_2) S_{3,N} + \frac12 \big( (S_{3,N})^2 - S^{(2)}_{3,N} \big) .
  \end{align}

  \begin{lemma}
    \label{Exrff:SNtooextreme}
    Assume that $W$ satisfies \eqref{Exrff:eq:tailassumptW1}
    or that $\mu_W^{(2)} < \infty$.
    For $0 < \delta < 1$ and $k \in \{2,3\}$ let
  \begin{align*}
    A_\delta := \big\{ S_{k,N} < (1-\delta) \mu_W N \big\} \cup \big\{ (S_{k,N})^2 - S^{(2)}_{k,N} < (1-\delta) \mu_W^2 N^2 \big\}
  \end{align*}
  then there exists $r=r(\delta)>0$ such that
  \begin{align}
    \label{Exrff:eq:PAdelta}
    \PP(A_\delta) \le e^{-r N} \quad \text{for all $N$ large enough}.
  \end{align}
  Furthermore, for all $r=r(\delta)>0$ with $\delta \in (0,1)$ we have
  \begin{align}
    \label{Exrff:eq:SN2tooextreme}
    \PP\big( (S_{k,N})^2 - S^{(2)}_{k,N} < \mu_W N S_{k,N}/2 \big) \le  e^{-r N} \quad \text{for all $N$ large enough}.
  \end{align}
  \end{lemma}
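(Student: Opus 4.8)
The plan is to recognise all three low-probability events in \eqref{Exrff:eq:PAdelta} and \eqref{Exrff:eq:SN2tooextreme} as lower-tail large-deviation events for sums of roughly $N$ i.i.d.\ copies of $W$, and to dispatch them with the Cram\'er--Chernoff bound together with a truncation argument; nothing beyond $\mu_W=\EE[W]\in(0,\infty)$ will actually be used, which is guaranteed by either hypothesis on $W$. Throughout I write $m:=N-k+1$ for the number of summands in $S_{k,N}$ and $S^{(2)}_{k,N}$ (recall \eqref{Exrff:defSN.SN2}; note $m/N\to1$ since $k\in\{2,3\}$ is fixed), $M:=\max_{k\le j\le N}W_j$, and, for a truncation level $L>0$, $\widehat W_j:=W_j\ind(W_j\le L)$ and $\widehat T:=\sum_{j=k}^N\widehat W_j$, a sum of $m$ i.i.d.\ variables in $[0,L]$ with mean $m\,\EE[W\ind(W\le L)]$.

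For the first event in $A_\delta$ I would use the Chernoff bound: for $\lambda>0$,
\[
\PP\big(S_{k,N}<(1-\delta)\mu_W N\big)\le e^{\lambda(1-\delta)\mu_W N}\big(\EE[e^{-\lambda W}]\big)^m .
\]
Since $W\ge0$ has finite mean, $\EE[e^{-\lambda W}]=1-\mu_W\lambda+o(\lambda)$ as $\lambda\downarrow0$ (monotone convergence applied to $(1-e^{-\lambda W})/\lambda\uparrow W$), so for $N$ large the right-hand side is at most $\big(e^{\lambda(1-\delta/2)\mu_W}\EE[e^{-\lambda W}]\big)^m=\big(1-\tfrac{\delta}{2}\mu_W\lambda+o(\lambda)\big)^m$, which is $\le e^{-r_1N}$ for a fixed small $\lambda$ and some $r_1=r_1(\delta)>0$.

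For the second event in $A_\delta$ the starting point is the elementary bound $(S_{k,N})^2-S^{(2)}_{k,N}=\sum_{k\le i\ne j\le N}W_iW_j\ge\sum_{k\le i\ne j\le N}\widehat W_i\widehat W_j=\widehat T^2-\sum_{j=k}^N\widehat W_j^2\ge\widehat T^2-L^2m$, using $0\le\widehat W_j\le W_j$ and $\widehat W_j^2\le L^2$. Choosing $L$ so large that $\EE[W\ind(W\le L)]>(1-\delta/4)\mu_W$ (possible since $\EE[W\ind(W\le L)]\uparrow\mu_W<\infty$), the mean of $\widehat T$ exceeds $(1-\delta/3)\mu_W N$ for $N$ large, so Hoeffding's inequality for the bounded i.i.d.\ sum $\widehat T$ gives $\PP(\widehat T<(1-\delta/2)\mu_W N)\le e^{-r_2N}$ for some $r_2=r_2(\delta,L)>0$, while on the complement $\widehat T^2-L^2m\ge(1-\delta/2)^2\mu_W^2N^2-L^2m\ge(1-\delta)\mu_W^2N^2$ for $N$ large since $(1-\delta/2)^2>1-\delta$ and $L^2m=o(N^2)$; combined with the previous paragraph this gives \eqref{Exrff:eq:PAdelta}. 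For \eqref{Exrff:eq:SN2tooextreme} the key observation is that a large $W_j$ can only help: from $S^{(2)}_{k,N}=\sum_jW_j^2\le M\sum_jW_j=MS_{k,N}$ we get $(S_{k,N})^2-S^{(2)}_{k,N}\ge S_{k,N}(S_{k,N}-M)$, and since the event in \eqref{Exrff:eq:SN2tooextreme} is contained in $\{S_{k,N}>0\}$ (both sides vanish when $S_{k,N}=0$) it is contained in $\{S_{k,N}-M<\mu_W N/2\}$. Now $S_{k,N}-M\ge\widehat T-L$ for every $L$ (if $M>L$ the maximising index is dropped from $\widehat T$, so $\widehat T\le S_{k,N}-M$; if $M\le L$ then $\widehat T=S_{k,N}$ and $S_{k,N}-M\ge S_{k,N}-L$), so choosing $L$ with $\EE[W\ind(W\le L)]>\tfrac45\mu_W$ and applying Hoeffding to $\widehat T$ as before gives $\PP(\widehat T<\tfrac35\mu_W N)\le e^{-r_3N}$; since $\tfrac35\mu_W N-L>\tfrac12\mu_W N$ for $N$ large, \eqref{Exrff:eq:SN2tooextreme} follows with $r=r_3>0$.

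The only place where the structure of the problem genuinely matters is the lower bound on $(S_{k,N})^2-S^{(2)}_{k,N}$: superficially it looks fragile in the heavy-tailed regime \eqref{Exrff:eq:tailassumptW1}, since one atypically large $W_j$ inflates $S^{(2)}_{k,N}$, but that same term inflates $(S_{k,N})^2$ even more through its cross-products, so the only dangerous scenario is $S_{k,N}$ (minus its maximal summand) being atypically \emph{small}, and truncation isolates this as a concentration statement for the well-behaved bulk. A minor bookkeeping nuisance is that there are $m=N-k+1$ rather than $N$ summands, but since $k\in\{2,3\}$ this costs only an arbitrarily small adjustment of constants and disappears in the rates, and no step uses a second moment of $W$ (so the case $\mu_W^{(2)}=\infty$ is no obstruction). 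Finally I would note that \eqref{Exrff:eq:SN2tooextreme} is in general only exponentially, not super-exponentially, small --- take $W$ exponentially distributed --- so the constant $r$ there must be read as a suitable fixed positive number, consistent with the statement.
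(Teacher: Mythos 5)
Your proof is correct, and while it shares the paper's overall strategy of truncating $W$ and exponentially concentrating the bounded part, the key step for \eqref{Exrff:eq:SN2tooextreme} is genuinely different. The paper writes $W_i=U_i+O_i$ with $U_i=W_i\ind(W_i\le K)$, applies Cram\'er's theorem to get \emph{two-sided} control $(1-\delta/4)\mu_W N\le \sum_i U_i\le(1+\delta/4)\mu_W N$, and then verifies \eqref{Exrff:eq:SN2tooextreme} by expanding $\big(\sum U_i+\sum O_i\big)^2-NK^2-\sum O_i^2$ and comparing term by term against $\tfrac{\mu_W N}{2}S_{k,N}$ --- a computation that needs both the upper and the lower deviation bound. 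Your observation $S^{(2)}_{k,N}\le M\,S_{k,N}$ with $M=\max_j W_j$, giving $(S_{k,N})^2-S^{(2)}_{k,N}\ge S_{k,N}(S_{k,N}-M)$ and hence reducing \eqref{Exrff:eq:SN2tooextreme} to the single one-sided event $\{S_{k,N}-M<\mu_W N/2\}$, is cleaner: it isolates exactly the dangerous scenario (the bulk being too small), needs only a lower-tail bound on $\widehat T$, and removes the $\delta$-dependence from that part entirely. Your handling of \eqref{Exrff:eq:PAdelta} matches the paper's in substance (truncate, concentrate, use $(\sum\widehat W_j)^2-\sum\widehat W_j^2$), with the minor variation that you treat the first event by a direct Chernoff bound on $S_{k,N}$ itself rather than on the truncated sum, which works because $W\ge 0$ makes $\EE[e^{-\lambda W}]$ finite with no truncation. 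Both routes use only $\mu_W\in(0,\infty)$, and your closing remark that the bound in \eqref{Exrff:eq:SN2tooextreme} should be read as holding for \emph{some} $r>0$ (rather than the literal ``for all $r$'' in the statement) is the right reading and consistent with what the paper actually proves.
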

  For \eqref{Exrff:eq:SN2tooextreme} (under Assumption~\eqref{Exrff:eq:tailassumptW1}) note that while the typical size of $S_{k,N}$ is $\approx \mu_W N$
  by the law of large numbers, conditioned on $S_{k,N} \gg \mu_W N$ there will typically be just one exceptionally
  large summand (by the tail assumption~\eqref{Exrff:eq:tailassumptW1}, this is much more
  likely than having many moderately large summands, cf.\ \cite{Nagaev82}). Then, the
  order of magnitude of $(S_{k,N})^2 - S^{(2)}_{k,N}$ will in fact be $\approx N S_{k,N}$ up to constants.

  \begin{proof}
  Write $W_i = U_i + O_i$ with $U_i := W_i \ind(W_i \le
  K), O_i := W_i \ind(W_i > K)$ where $K$ is chosen so large that
  $\EE[U_i] > (1-\delta/5) \mu_W$.  Since $U_i \ge 0$ are i.i.d. bounded random variables,
 we get from Cram\'er's large deviations theorem (see, for example, Theorem 2.2.3 in \cite{DemboZeitouni98}) that
  \begin{align}
    \label{Exrff:eq:PS3Ntoosmall}
   \PP\( \sum_{i=k}^N U_i < (1-\delta/4) \, \mu_W N \)+ \PP\( \sum_{i=k}^N U_i > (1+\delta/4) \, \mu_W N \) \le e^{-r N}
  \end{align}
  for all $N$ large enough with $r=r(\delta)>0$. We now argue that
  $A_{\delta} \subset \{\sum_{i=k}^N U_i < (1-\delta/4) \, \mu_W N\}$ for $N$ large enough: Obviously,
  $\{S_{k,N} < (1-\delta) \, \mu_W N \}\subset \{\sum_{i=k}^N U_i < (1-\delta/4)  \, \mu_W N \}.$
  On the event $\big\{ \sum_{i=k}^N U_i \ge (1-\delta/4) \, \mu_W N \big\}$ we have
  \begin{align*}
    \notag
    (S_{k,N})^2 - S^{(2)}_{k,N} & \ge \Big( (1-\delta/4) \, \mu_W N + \sum_{i=k}^N O_i \Big)^2
    - N K^2 - \sum_{i=k}^N O_i^2 \\
    & \ge (1-\delta/4)^2 \mu_W^2 N^2 - N K^2
    = \Big( 1- \frac{\delta}{2} + \frac{\delta^2}{16} - \frac{K^2}{\mu_W^2 N}\Big) \mu_W^2 N^2
    \ge (1-\delta) \mu_W^2 N^2
  \end{align*}
  for $N$ large enough. Thus,  also $\{(S_{k,N})^2 - S^{(2)}_{k,N} < (1-\delta) \mu_W^2 N^2\} \subset
  \{\sum_{i=k}^N U_i < (1-\delta/4) \, \mu_W N\}$ 
  and  we have that  \eqref{Exrff:eq:PAdelta} follows from \eqref{Exrff:eq:PS3Ntoosmall}.
  \medskip

  On the event $\big\{ 1-\delta/4 \le (\mu_W N)^{-1} \sum_{i=k}^N U_i \le 1+\delta/4 \big\}$
  we can estimate similarly to the above for all $\delta \in (0,1):$
  \begin{align*}
  \nonumber
  (S_{k,N})^2 - S^{(2)}_{k,N} & \ge \( (1-\delta/4) \, \mu_W N + \sum_{i=k}^N O_i \)^2
  - N K^2 - \sum_{i=k}^N O_i^2 \\
  \nonumber
  & = (1-\delta/4)^2 \mu_W^2 N^2 + 2 (1-\delta/4) \mu_W N \sum_{i=k}^N O_i
  + \( \sum_{i=k}^N O_i \)^2 - N K^2 - \sum_{i=k}^N O_i^2 \\
  & \ge \frac{\mu_W N}{2} \( (1+\delta/4) \mu_W N + \sum_{i=k}^N O_i \)
  \ge \frac{\mu_W N}{2} \( \sum_{i=k}^N W_i \) = \frac{\mu_W N}{2} S_{k,N}.
  \end{align*}
  Thus, \eqref{Exrff:eq:SN2tooextreme} follows from \eqref{Exrff:eq:PS3Ntoosmall}. 
  \end{proof}

  For ease of reference we recall here a classical fact about multinomal distributions
  (see, e.g., Formula~(35.5) in \cite{JKB97}).
  \begin{lemma}
    \label{lem:Multinomfactorialmoments}
    For $Y=(Y_1,\dots,Y_m) \sim \mathrm{Multinomial}(N,p_1,p_2,\dots,p_m)$ and
    $n_1, n_2,\dots,n_m \in \NN_0$ we have
    \begin{align}
      \label{Exrff:eq:multinommoments}
      \EE\left[ (Y_1)_{n_1} (Y_2)_{n_2} \dots (Y_m)_{n_m} \right]
      = (N)_{n} p_1^{n_1} p_2^{n_2} \cdots p_m^{n_m}
    \end{align}
    with $n=n_1+n_2+\cdots+n_m$.
  \end{lemma}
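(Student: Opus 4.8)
The plan is to derive \eqref{Exrff:eq:multinommoments} from the standard balls-in-boxes representation of the multinomial law together with the combinatorial meaning of falling factorials. First I would represent $Y=(Y_1,\dots,Y_m)$ via $N$ independent categorical trials $\zeta_1,\dots,\zeta_N$ with $\PP(\zeta_\ell=k)=p_k$, setting $Y_k:=\#\{\ell\le N:\zeta_\ell=k\}$, so that indeed $Y\sim\mathrm{Multinomial}(N,p_1,\dots,p_m)$. The key elementary observation is that for a finite set $S$ the falling factorial $(\#S)_{n_k}$ is exactly the number of ordered $n_k$-tuples of pairwise distinct elements of $S$; applying this to $S=\{\ell:\zeta_\ell=k\}$ gives
\[
(Y_k)_{n_k}=\sum \ind\{\zeta_{a_1}=\cdots=\zeta_{a_{n_k}}=k\},
\]
where the sum runs over ordered tuples $(a_1,\dots,a_{n_k})$ of distinct indices in $\{1,\dots,N\}$.

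Next I would multiply these identities over $k=1,\dots,m$ and expand the product. Because the prescribed target values $k$ are pairwise distinct, a term survives only when the index tuples attached to different boxes are pairwise disjoint; their concatenation then ranges precisely over all ordered $n$-tuples of distinct indices out of $\{1,\dots,N\}$, where $n=n_1+\cdots+n_m$, of which there are $(N)_{n_1}(N-n_1)_{n_2}\cdots(N-n_1-\cdots-n_{m-1})_{n_m}=(N)_n$. For each such configuration the corresponding event $\{\zeta_{a^{(k)}_i}=k \text{ for all } k,i\}$ has probability $p_1^{n_1}p_2^{n_2}\cdots p_m^{n_m}$ by independence of the $\zeta_\ell$. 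Taking expectations and using linearity then yields \eqref{Exrff:eq:multinommoments}.

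Alternatively, one can argue via the joint factorial-moment generating function: from $\EE\big[\prod_{k=1}^m(1+t_k)^{Y_k}\big]=\big(1+\sum_{k=1}^mp_kt_k\big)^N$ one recovers \eqref{Exrff:eq:multinommoments} by applying $\partial_{t_1}^{n_1}\cdots\partial_{t_m}^{n_m}$ and setting $t=0$. There is no genuine obstacle in either route; the only point requiring a little care in the combinatorial proof is the disjointness bookkeeping that produces the factor $(N)_n$ (rather than $N^{n}$), which is precisely where the ``without replacement'' structure of the multinomial enters.
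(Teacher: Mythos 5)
Your proof is correct. Note that the paper does not actually prove Lemma~\ref{lem:Multinomfactorialmoments} at all: it is stated as a classical fact with a citation to Formula~(35.5) in \cite{JKB97}, so there is no argument in the paper to compare against. Your combinatorial derivation is a complete and self-contained substitute: representing $Y_k$ as the occupancy counts of $N$ independent categorical trials, identifying $(Y_k)_{n_k}$ with the number of ordered $n_k$-tuples of distinct trial indices landing in category $k$, and observing that cross-category tuples are automatically disjoint (overlapping terms carry an indicator of two incompatible events and vanish), you correctly obtain $(N)_n$ surviving index configurations, each contributing $p_1^{n_1}\cdots p_m^{n_m}$. The alternative route via differentiating $\EE\big[\prod_k(1+t_k)^{Y_k}\big]=\big(1+\sum_k p_k t_k\big)^N$ at $t=0$ is equally valid. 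Either argument would serve as a proof if one wished to make the paper self-contained on this point.
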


\begin{proof}[Proof of Lemma~\ref{Exrff:lem:cN}]
  To verify \eqref{Exrff:eq:cNformula} observe that from \eqref{Exrff:eq:lawVij} with $Q_N$ from \eqref{def:QN}
  \[
  \cL\left( V_1 \,\big|\, (W_i ) \right) = \text{Bin}\left( N, {Q_N}\right)
  \]
  hence using \eqref{Exrff:eq:multinommoments} from Lemma~\ref{lem:Multinomfactorialmoments}
  \begin{align*}
    \EE\big[V_1 (V_1-1) \mid (W_i ) \big] & =N(N-1){Q_N^2}.
  \end{align*}
  \eqref{Exrff:eq:cNformula} follows
  from this via the formula $c_N = \EE[V_1 (V_1-1)]/(8(N-1)),$ see Lemma \ref{lem:cN}.
  \medskip

  We now assume that \eqref{Exrff:eq:tailassumptW1} holds.
  In order to prove \eqref{Exrff:eq:cNBeta} we first verify that
  \begin{align}
    \label{Exrff:eq:EWquotcN.limsup}
    \limsup_{N\to \infty} N^{\alpha} \EE\left[{Q_N^2}\right]
    \le 8 C_\mathrm{pair}^{(\mathrm{Beta})}.
  \end{align}
  Note that using \eqref{Exrff:eq:ZNre-expr.2} we can re-write
  \begin{align}
    \label{Exrff:eq:rewrite.pV1}
   { Q_N}= \frac{W_1 S_{2,N}}{W_1S_{2,N} + \frac12 (S_{2,N})^2 -  \frac12 S^{(2)}_{2,N}}
    = \frac{W_1}{W_1 + \frac{(S_{2,N})^2 - S^{(2)}_{2,N}}{2 S_{2,N}}} .
  \end{align}
  Put
  \begin{align*}
    \tilde{A}_{N,\delta} := \Big\{ \frac{1-\delta}{2} \mu_W N <  \frac{(S_{2,N})^2 - S^{(2)}_{2,N}}{2 S_{2,N}}
    < \frac{1+\delta}{2} \mu_W N \Big\}.
  \end{align*}
  We have
  \begin{align}
    \label{Exrff:eqlimPAtildeNdelta}
    \PP\big(\tilde{A}_{N,\delta}\big) \mathop{\longrightarrow}_{N\to\infty} 1,
  \end{align}
  noting that $S_{2,N}/N \to \mu_W$ and $S^{(2)}_{2,N}/N^2 \to 0$ as $N\to\infty$ in probability (for the latter use that $W_i^2$ have regularly varying tails of index $\alpha/2 \in (1/2,1)$, so in particular $S_{2,N}^{(2)}/N^{2/\alpha}$ is tight; this follows e.g.\
  from \cite[Thm.~2 in Section~XVII.5]{Feller}). Furthermore, consider
  \begin{align*}
  B_N:=\Big\{ \frac{(S_{2,N})^2 - S^{(2)}_{2,N}}{2 S_{2,N}}
  < \mu_W N/5 \Big\}
  \end{align*}
  and note that $ \PP (B_N ) \le e^{- r N} $
  for $N$ large enough with $r>0$ due to \eqref{Exrff:eq:SN2tooextreme}
  from Lemma~\ref{Exrff:SNtooextreme} for $k=2.$ Since for small enough
  $\delta>0$ we have $B_N \subset \tilde{A}_{N,\delta}^c$ it then follows for those  $\delta$ that
  \begin{align}
    \label{Exrff:eq:EWquotcN.UB}
    \EE\left[ {Q_N}^2\right]
    & \le e^{- rN} + \EE\left[ \frac{W_1^2}{\big( W_1 + \mu_W N/5\big)^2} \ind_{\tilde{A}_{N,\delta}^c}\right]
    + \EE\left[ \frac{W_1^2}{\big( W_1 + (1-\delta) \mu_W N/2 \big)^2} \right] \notag \\
    & =  e^{- rN} + \EE\left[ \frac{W_1^2}{\big( W_1 + \mu_W N/5\big)^2} \right] \PP\big(\tilde{A}_{N,\delta}^c \big)
    + \EE\left[ \frac{W_1^2}{\big( W_1 + (1-\delta) \mu_W N/2 \big)^2} \right]
  \end{align}
  and Lemma~\ref{Exrff:lemWdbWpMmom} together with \eqref{Exrff:eqlimPAtildeNdelta} implies
  \begin{align*}
    \limsup_{N\to \infty} N^{\alpha} \EE\left[Q_N^2\right]
    \le c_W \( \tfrac{2}{(1-\delta) \mu_W}\)^\alpha \alpha B(2-\alpha, \alpha),
  \end{align*}
  \eqref{Exrff:eq:EWquotcN.limsup} follows by taking $\delta\downarrow 0$.
  \medskip

  Analogous, in fact a little easier, arguments can be used to show that
  \begin{align}
    \label{Exrff:eq:EWquotcN.liminf}
    \liminf_{N\to \infty} N^{\alpha} \EE\left[  Q_N^2 \right]
    \ge 8 C_\mathrm{pair}^{(\mathrm{Beta})} .
  \end{align}
  Again using \eqref{Exrff:eq:rewrite.pV1} we get
  \begin{align}
    \label{Exrff:eq:EWquotcN.LB}
    \EE\left[{Q_N}^2 \right]
    & \ge \PP(\tilde{A}_{N,\delta}) \EE\bigg[ \frac{W_1^2}{%
      \big(W_1 + (1+\delta) \mu_W N/2 \big)^2} \bigg],
  \end{align}
  now combine \eqref{Exrff:eqlimPAtildeNdelta} with Lemma~\ref{Exrff:lemWdbWpMmom}
  as above and then let $\delta\downarrow0$ to conclude \eqref{Exrff:eq:EWquotcN.liminf}.
  \eqref{Exrff:eq:EWquotcN.limsup} and \eqref{Exrff:eq:EWquotcN.liminf} combined with \eqref{Exrff:eq:cNformula}
  yield \eqref{Exrff:eq:cNBeta}.
  \medskip

  We now assume $\mu_W^{(2)} < \infty.$  The proof of \eqref{Exrff:eq:cNKingman} is similar, in fact simpler:
  Instead of using Lemma~\ref{Exrff:lemWdbWpMmom}, we simply observe that in this case
  \begin{align*}
    \lim_{M\to\infty} M^2 \EE\Big[ \frac{W_1^2}{(W_1+M)^2} \Big]
    = \lim_{M\to\infty} \EE\Big[ W_1^2 \frac{M^2}{(W_1+M)^2} \Big] = \EE\big[ W_1^2 \big] = \mu_W^{(2)}
  \end{align*}
  by dominated convergence. Thus, \eqref{Exrff:eq:EWquotcN.UB} implies that
  $\limsup_{N\to\infty} N^2 \EE\left[Q_N^2\right] \le 4 (1-\delta)^{-2} \mu_W^{(2)}/\mu_W^2$
  and \eqref{Exrff:eq:EWquotcN.LB} implies
  $\limsup_{N\to\infty} N^2 \EE\left[ Q_N^2 \right] \ge 4 (1-\delta)^2 \mu_W^{(2)}/\mu_W^2$.
  Taking $\delta\downarrow 0$, this combined with \eqref{Exrff:eq:cNformula} yields \eqref{Exrff:eq:cNKingman}.
\end{proof}

\begin{proof}[Proof of Lemma~\ref{Exrff:lem:highmombd}]
  1.\ Assume $\mu_W^{(2)}<\infty$ and recall $S_{2,N}$ and $S^{(2)}_{2,N}$ from \eqref{Exrff:defSN.SN2}.   Since
    $\cL\big( V_1 \,\big|\, (W_i ) \big) = \mathrm{Bin}\big( N, Q_N\big)$
  we have by \eqref{Exrff:eq:multinommoments}
  \begin{align*}
    \EE\big[ (V_1)_{3} \, \big| \, (W_i)\big]
    = (N)_{3} \frac{W_1^3 (S_{2,N})^3}{Z_N^3}.
  \end{align*}
  We re-write $Z_N = W_1 S_{2,N} + \frac12 \big( (S_{2,N})^2 - S^{(2)}_{2,N} \big)$ as in \eqref{Exrff:eq:ZNre-expr.2}.
  Lemma~\ref{Exrff:SNtooextreme}
  shows that $(S_{2,N})^2 - S^{(2)}_{2,N} \ge (\mu_W/2) N S_{2,N}$ holds with probability $\ge 1-2e^{-r N}.$ Thus,
  \begin{align*}
    \EE\big[ (V_1)_{3} \big] \le N^3 \EE\bigg[ \frac{W_1^3}{(W_1+\mu_W N/2)^3}\bigg]
    + 2 N^3 e^{-r N},
  \end{align*}
  and \eqref{Exrff:eq:phi13zero} follows  as in the proof of \cite[Proposition~7]{Schweinsberg2003} with the help of \eqref{Exrff:eq:cNKingman}
 in Lemma \ref{Exrff:lem:cN}.
\medskip

\noindent
2.\ Write
\[
V' := \sum_{j=3}^N V_{1,j}, \quad V'' := \sum_{j=3}^N V_{2,j},
\]
hence $V_1=V_{1,2}+V'$, $V_2=V_{1,2}+V''$
and a straightforward
calculation yields
\begin{align*}
  (V_1)_{2 } (V_2)_{2}  & = (V_{1,2})_{4} + 4(V_{1,2})_{3} + 2(V_{1,2})_{2}
  + 2 (V_{1,2})_{3} (V' + V'') + 4 (V_{1,2})_{2} (V' + V'') \\
  & \hspace{1.5em} + (V_{1,2})_{2} (V'')_{2} + (V_{1,2})_{2} (V')_{2}
  + 4 (V_{1,2})_{2} V' V'' + 4 V_{1,2} V' V'' \\
  & \hspace{1.5em} + 2 V_{1,2} V' (V'')_{2} + 2 V_{1,2} V'' (V')_{2} + (V')_{2} (V'')_{2} .
\end{align*}
Instead of spelling out the details of this computation, note that there is a
combinatorial interpretation: Consider $V_{1,2}+V'+V''$ numbered balls,
of which $V_{1,2}$ are white, $V'$ are red and $V''$ are blue.
Then $(V_1)_{2 } (V_2)_{2}$ counts the number of ordered pairs we can form where
the first pair consists of two distinct balls which are either white or red and
the second pair consists of two distinct balls which are either white or blue
(and the same ball(s) might possibly appear in both pairs). The right-hand side
decomposes this number: There are $(V_{1,2})_{4}$ pairs where all balls are white and
all are distinct, $4(V_{1,2})_{3}$ pairs where all balls are white and exactly one ball appears
twice, etc.
\smallskip

Since the law of $\big( V_{1,2}, V', V'', N-V_{1,2}- V'- V''\big)$ given the $(W_i)$ is
\begin{align*}
  \mathrm{Multinomial}\Big( N, \frac{W_1 W_2}{Z_N},
  \frac{W_1 S_{3,N}}{Z_N}, \frac{W_2 S_{3,N}}{Z_N}, \frac{\sum_{3 \le j < k \le N} W_j W_k}{Z_N}\Big)
\end{align*}
we find using Lemma~\ref{lem:Multinomfactorialmoments} in the first equality that
\begin{align}
\label{Exrff:eq:EV12V22}
  & \EE\big[ (V_1)_{2 } (V_2)_{2 } \, \big| \, (W_i)\big] \\
  & = (N)_{4} \big( {\textstyle \frac{W_1 W_2}{Z_N}}\big)^4
  + 4 (N)_{3} \big( {\textstyle \frac{W_1 W_2}{Z_N}}\big)^3
  + 2 (N)_{2} \big( {\textstyle \frac{W_1 W_2}{Z_N}}\big)^2 \notag \\
  & \hspace{1.5em} + 2 (N)_{4} \big( {\textstyle \frac{W_1 W_2}{Z_N}}\big)^3
  {\textstyle \frac{(W_1 + W_2) S_{3,N}}{Z_N}}
  + 4 (N)_{3} \big( {\textstyle \frac{W_1 W_2}{Z_N}}\big)^2 {\textstyle \frac{(W_1 + W_2) S_{3,N}}{Z_N}} \notag \\
  & \hspace{1.5em} + (N)_{4} \big({\textstyle \frac{W_1 W_2}{Z_N}}\big)^2
  \big({\textstyle \frac{W_2 S_{3,N}}{Z_N}}\big)^2
  + (N)_{4} \big({\textstyle \frac{W_1 W_2}{Z_N}}\big)^2
  \big({\textstyle \frac{W_1 S_{3,N}}{Z_N}}\big)^2 \notag \\
  & \hspace{1.5em} + 4 (N)_{4} \big({\textstyle \frac{W_1 W_2}{Z_N}}\big)^2
  {\textstyle \frac{W_1 S_{3,N} W_2 S_{3,N}}{Z_N^2}}
  + 4 (N)_{3} {\textstyle \frac{W_1 W_2}{Z_N}}
  {\textstyle \frac{W_1 S_{3,N} W_2 S_{3,N}}{Z_N^2}} \notag \\
  & \hspace{1.5em}
  + 2 (N)_{4} {\textstyle \frac{W_1 W_2}{Z_N} \frac{W_1 S_{3,N}}{Z_N}}
  \big({\textstyle \frac{W_2 S_{3,N}}{Z_N}}\big)^2
  + 2 (N)_{4} {\textstyle \frac{W_1 W_2}{Z_N} \frac{W_2 S_{3,N}}{Z_N}}
  \big({\textstyle \frac{W_1 S_{3,N}}{Z_N}}\big)^2 \notag \\
  & \hspace{1.5em} + (N)_{4} \big({\textstyle \frac{W_1 S_{3,N}}{Z_N}}\big)^2
  \big({\textstyle \frac{W_2 S_{3,N}}{Z_N}}\big)^2 \notag \\
  & = \frac{(N)_{4}}{Z_N^4} \Big( W_1^4 W_2^4 + 2 W_1^3 W_2^3 (W_1+W_2) S_{3,N}
  + W_1^2 W_2^4 (S_{3,N})^2 + W_1^4 W_2^2 (S_{3,N})^2 \notag \\
  & \hspace{4.5em}
  + 4 W_1^3 W_2^3 (S_{3,N})^2  + 2 W_1^2 W_2^3 (S_{3,N})^3 + 2 W_1^3 W_2^2 (S_{3,N})^3
  + W_1^2 W_2^2 (S_{3,N})^4 \Big) \notag \\
  & \hspace{1.5em} + \frac{4 (N)_{3}}{Z_N^3} \Big( W_1^3 W_2^3 + W_1^2 W_2^2 (W_1+W_2) S_{3,N}
  +  W_1^2 W_2^2 ( S_{3,N} )^2 \Big) + \frac{2 (N)_{2} W_1^2 W_2^2}{Z_N^2}. \notag
\end{align}

Recall
\[
Z_N = W_1W_2 + (W_1+W_2) S_{3,N} + \frac12 \big( (S_{3,N})^2 - S^{(2)}_{3,N} \big)
\]
from \eqref{Exrff:eq:ZNre-expr} and that
we can bound
\begin{align*}
(S_{3,N})^2 - S^{(2)}_{3,N} \ge (1-\delta) \mu_W^2 N^2 \vee \frac{\mu_W}{2} N S_{3,N}
\end{align*}
except on an event with exponentially small probability, cf.\ \eqref{Exrff:eq:PAdelta}
and \eqref{Exrff:eq:SN2tooextreme} from Lemma~\ref{Exrff:SNtooextreme}.
\smallskip

We will not treat all the terms on the right-hand side of \eqref{Exrff:eq:EV12V22} in detail
(but see Remark~\ref{Exrff:rem:V12V22bound} below) since the computations are long but
otherwise relatively straightforward.
Consider for example the term
\begin{align*}
\EE\left[ \frac{W_1^2 W_2^2 (S_{3,N})^4}{Z_N^4} \right] =
\EE\left[ \frac{W_1^2 W_2^2 (S_{3,N})^4}{\( W_1W_2 + (W_1+W_2) S_{3,N} + \( (S_{3,N})^2 - S^{(2)}_{3,N} \)/2 \)^4} \right].
\end{align*}
If $(1-\delta) \mu_W N \le S_{3,N} \le 2 \mu_W N$, say, we can estimate
\begin{align*}
\nonumber
  \frac{W_1^2 W_2^2 (S_{3,N})^4}{Z_N^4}
  & \le 16 \mu_W^4 N^4 \frac{W_1^2 W_2^2}{\big( (W_1+W_2)(1-\delta)  \mu_W N + (1-\delta) \mu_W^2 N^2/2 \big)^4} \\
  & \le 16(1-\delta)^{-4} \frac{W_1^2}{\big( W_1 + \mu_W N/2 \big)^2}
  \frac{W_2^2}{\big( W_2 + \mu_W N/2 \big)^2},
\end{align*}
hence
\begin{align*}
  \EE\left[ \frac{W_1^2 W_2^2 (S_{3,N})^4}{Z_N^4} \ind(S_{3,N} \le 2 \mu_W N)\right] &
  \le e^{-r N} + 16(1-\delta)^{-4} \left( \EE\left[ \frac{W_1^2}{\big( W_1 + \mu_W N/2 \big)^2} \right] \right)^2
  \le C N^{-2 \alpha}
\end{align*}
for $N$ large enough where we used Lemma~\ref{Exrff:lemWdbWpMmom} in the last inequality.
\smallskip

On $\{S_{3,N} \ge 2 \mu_W N \}$ we also have $S_{3,N}^2 - S_{3,N}^{(2)} \ge (\mu_W/2) N S_{3,N}$
with high probability. Then
\begin{align*} \nonumber
  & \EE\left[ \frac{W_1^2 W_2^2 (S_{3,N})^4}{Z_N^4} \ind(S_{3,N} > 2 \mu_W N)\right] \\
  \le& \EE\left[ \frac{W_1^2 W_2^2 (S_{3,N})^4}{\big( (W_1 + W_2) S_{3,N} + (\mu_W/4) N S_{3,N}\big)^4} \ind(S_{3,N} > 2 \mu_W N)\right]
  + e^{-r N} \\
   \nonumber
   \le & \EE\left[ \frac{W_1^2 W_2^2}{\big( (W_1 + W_2) + \mu_W N/4 \big)^4} \right] \PP(S_{3,N} > 2 \mu_W N) + e^{-r N} \\
   \le & \left( \EE\left[ \frac{W_1^2}{\big( W_1 + \mu_W N/2 \big)^2} \right] \right)^2 \PP(S_{3,N} > 2 \mu_W N) + e^{-r N}
  = O(N^{-2\alpha})
\end{align*}
and we obtain
\begin{align*}
  \limsup_{N\to\infty} N^{2\alpha }\EE\left[ \frac{W_1^2 W_2^2 (S_{3,N})^4}{Z_N^4} \right] < \infty.
\end{align*}
\smallskip

Similarly, using Lemma~\ref{Exrff:SNtooextreme}
\begin{align*}
  \EE\left[ \frac{W_1^4 W_2^4}{Z_N^4} \right] & \le
  \EE\left[ \frac{W_1^4 W_2^4}{\big( W_1W_2 + (W_1+W_2) (1-\delta)\mu_W N + (1-\delta)\mu_W^2 N^2/2 \big)^4}
    \right] + e^{-r N} \\
    & \le \frac{2^4}{(1-\delta)^4} \EE\left[ \frac{W_1^4 W_2^4}{\big( W_1W_2 + (W_1+W_2) \mu_W N + \mu_W^2 N^2 \big)^4}
    \right] + e^{-r N} \\
    & = \frac{2^4}{(1-\delta)^4} \EE\left[ \frac{W_1^4}{(W_1+ \mu_W N)^4} \frac{W_2^4}{(W_2 + \mu_W N)^4}
    \right] + e^{-r N}
\end{align*}
and we obtain again
\begin{align*}
  \limsup_{N\to\infty} N^{2\alpha}\EE\left[ \frac{W_1^4 W_2^4}{Z_N^4} \right] < \infty
\end{align*}
from Lemma~\ref{Exrff:lemWdbWpMmom}.
\smallskip

The other terms in \eqref{Exrff:eq:EV12V22} can be treated analogously (see Remark \ref{Exrff:rem:V12V22bound})
to yield
\begin{align}
  \label{Exrff:eq:V12V22bound}
  \limsup_{N\to\infty} N^{2\alpha-4} \EE\big[ (V_1)_{2 } (V_2)_{2 } \big] < \infty.
\end{align}
Since $N^2 c_N \sim C_\mathrm{pair}^{(\mathrm{Beta})} N^{3-\alpha} $ by \eqref{Exrff:eq:cNBeta}  of Lemma~\ref{Exrff:lem:cN}
and $4-2\alpha < 3 -\alpha$ this proves the claim \eqref{Exrff:eq:phi22zero}.
\end{proof}
\medskip

\begin{remark}
  \label{Exrff:rem:V12V22bound}
  \rm
  For a rough idea of the size of the terms on the right-hand side of
  \eqref{Exrff:eq:EV12V22} we can argue as follows: Consider the ``typical
  event'' $S_{3,N} \approx \mu N, (S_{3,N})^2 - S^{(2)}_{3,N} \approx \mu^2 N^2$.
  When $W_1$ and $W_2$ are both bounded, the right-hand side of
  \eqref{Exrff:eq:EV12V22} is then $O(1)$, the
  contribution of the case $W_1 \approx N^{\beta_1}$, $W_2 \approx
  N^{\beta_2}$ is then $\approx N^\gamma$ with
\begin{align*}
  \gamma & = -\alpha(\beta_1+\beta_2) + 4 \\
  & \hspace{1.5em} + \big[ (4\beta_1+4\beta_2) \vee (4\beta_1 + 3\beta_2 + 1) \vee (3\beta_1 + 4\beta_2 + 1) \vee
  (2\beta_1 + 4\beta_2 + 2) \vee (4\beta_1 + 2\beta_2 + 2) \\
  & \hspace{3.5em} \vee (3\beta_1 + 3\beta_2 + 2) \vee (2\beta_1 + 3\beta_2 + 3) \vee
  (3\beta_1 + 2\beta_2 + 2) \vee (2\beta_1 + 2\beta_2 + 4) \big] \\
  & \hspace{1.5em} - 4 \big[ (\beta_1+\beta_2) \vee (\beta_1+1) \vee (\beta_2+1) \vee 2 \big].
\end{align*}
Observe that when $\beta_1, \beta_2 \le 1$ this is
$=-\alpha(\beta_1+\beta_2) + 4 + (2\beta_1 + 2\beta_2 + 4) - 4 \cdot 2
= (2-\alpha) (\beta_1+\beta_2)$ ($<3-\alpha$, note that we divide in \eqref{Exrff:eq:phi22zero} by
$c_N N^2 \approx N^{3-\alpha}$); when $\beta_1 < 1 \le \beta_2$, say,
this is $=-\alpha(\beta_1+\beta_2) + 4 + (2\beta_1 + 4\beta_2 + 2) -
4(\beta_2+1) = 2 + (2-\alpha) \beta_1 - \alpha \beta_2$ ($<3-\alpha$); when $\beta_1, \beta_2 > 1$ this is
$=-\alpha(\beta_1+\beta_2) + 4 + (4\beta_1+4\beta_2) - 4 (\beta_1+\beta_2)
= 4 - \alpha(\beta_1+\beta_2)$ ($<3-\alpha$).
This confirms \eqref{Exrff:eq:V12V22bound} at least on an intuitive level.
\end{remark}

\begin{proof}[Proof of Lemma~\ref{Exrff:lem:V1tail}]
  We start by arguing that in order to show \eqref{Exrff:eq:V1asympttail}
  it suffices to check that for $x \in (0,1)$
  \begin{align}
  \label{frequencyconv}
  \lim_{N\to\infty} \frac{N}{c_N} \PP\left( Q_N > x \right)
  \end{align}
  exists and is given by the right-hand side of \eqref{Exrff:eq:V1asympttail}
  (note that this expression is continuous in $x$). Indeed, since $\cL\( V_1 \,\big|\, (W_i ) \) = \mathrm{Bin}\( N, Q_N\)$
  and
  \[
  \mathrm{Bin}(N, p)(\{0,1,\dots,\lceil (1-\epsilon) p N
  \rceil\} \cup \{\lfloor (1+\epsilon) p N
  \rfloor, \dots, N\}) \le e^{-r N}
  \]
  for any $p, \epsilon \in (0,1)$ with
  $r=r(p,\epsilon)>0$
  by classical large deviation estimates for the
  binomial distribution (e.g.\ \cite{DemboZeitouni98}, Thm.~2.2.3 and Ex.~2.2.23~(b)) we may write for $\epsilon>0$
  \begin{eqnarray*}
   \PP\left( \frac{V_1}{N} > x \right)
   &=& \PP\left( \frac{V_1}{N} > x \big| Q_N> x (1+\epsilon)  \right)  \PP\left(  Q_N > x (1+\epsilon) \right)\\
   &&+ \PP\left( \frac{V_1}{N} > x \big| x (1-\epsilon) < Q_N \leq x (1+\epsilon)  \right)
   \times \, \PP\left(   x (1-\epsilon) < Q_N \leq x (1+\epsilon)  \right)\\
   &&+ \PP\left( \frac{V_1}{N} > x \big| Q_N \leq x (1-\epsilon)  \right) \PP\( Q_N \leq x (1-\epsilon) \right).
\end{eqnarray*}
Due to \eqref{frequencyconv} and the continuity of the limit
we can choose $\epsilon>0$ small and then $N$ large to make
the second probability in the second term multiplied by $N/c_N$ and thus the second term
 multiplied by $N/c_N$ arbitrarily small. By choosing $N$ potentially larger and by
using the above large deviations result the conditional probabilities in the first and third line are arbitrarily close to $1$ respectively arbitrarily close to $0$ even when multiplied by $N/c_N \sim 1/C_\mathrm{pair}^{(Beta)} N^{\alpha}$   by \eqref{Exrff:eq:cNBeta}. Thus, the
claim \eqref{Exrff:eq:V1asympttail} now follows from  \eqref{frequencyconv}.

  As in the proof of Lemma~\ref{Exrff:lem:cN} we re-express (see \eqref{Exrff:eq:rewrite.pV1} and
  recall $S_{2,N}$ and $S^{(2)}_{2,N}$ from \eqref{Exrff:defSN.SN2})
  \[
  Q_N = \frac{W_1}{W_1 + \frac{(S_{2,N})^2 - S^{(2)}_{2,N}}{2 S_{2,N}}}.
  \]
  Recall that $\tilde{A}_{N,\delta} := \Big\{ \frac{1-\delta}{2} \mu_W
  N < \frac{(S_{2,N})^2 - S^{(2)}_{2,N}}{2 S_{2,N}} <
  \frac{1+\delta}{2} \mu_W N \Big\}$ satisfies
  $\PP\big(\tilde{A}_{N,\delta}\big) 
  \to 1$ as $N\to\infty$ for every $\delta >0$ (see
  \eqref{Exrff:eqlimPAtildeNdelta} in the proof of Lemma~\ref{Exrff:lem:cN}), furthermore
  \begin{align*}
    \PP\bigg(\tilde{A}_{N,\delta}^c \cap \Big\{ \frac{(S_{2,N})^2 - S^{(2)}_{2,N}}{2 S_{2,N}}
    < \mu_W N/5 \Big\} \bigg) \le 3 e^{- r N}
  \end{align*}
  for $N$ large enough with $r=r(\delta)>0$ (combine \eqref{Exrff:eq:PAdelta} and \eqref{Exrff:eq:SN2tooextreme}
  from Lemma~\ref{Exrff:SNtooextreme}).
  Thus
  \begin{align*}
    \limsup_{N\to\infty} \frac{N}{c_N} \PP\( \left\{Q_N > x \right\} \cap \tilde{A}_{N,\delta}^c \)
    & \le \limsup_{N\to\infty} \frac{N}{c_N} \PP\big( W_1>N\mu_W x /5  \big) \PP\big(\tilde{A}_{N,\delta}^c\big) = 0
  \end{align*}
  where we use $N/c_N \sim N^\alpha / C_\mathrm{pair}^{(\mathrm{Beta})}$ by Lemma~\ref{Exrff:lem:cN},
  combined with \eqref{Exrff:eq:tailassumptW1} and \eqref{Exrff:eqlimPAtildeNdelta}.
  \smallskip

  Now
  \begin{align*}
    \limsup_{N\to\infty} & \frac{N}{c_N} \PP\( \left\{ Q_N > x \right\} \cap \tilde{A}_{N,\delta} \)
    \le \limsup_{N\to\infty} \frac{N}{c_N} \PP\( \frac{W_1}{W_1+(1-\delta)\mu_W N/2} > x \) \PP\(\tilde{A}_{N,\delta}\)
    \notag \\
    & = \limsup_{N\to\infty} \frac{N^\alpha}{C_\mathrm{pair}^{(\mathrm{Beta})}} \PP\Big( W_1 > \frac{x}{1-x} (1-\delta) \mu_W N/2 \Big)
    = \frac{(1-\delta)^{-\alpha}}{C_\mathrm{pair}^{(\mathrm{Beta})}} c_W \frac{2^\alpha}{\mu_W^\alpha} \( \frac{1-x}{x} \)^\alpha
  \end{align*}
  by the tail assumption \eqref{Exrff:eq:tailassumptW1}, analogous to the proof of Lemma~14
  in \cite{Schweinsberg2003}, and similarly for the $\liminf$.
  Finally note that from \eqref{Exrff:eq:cNBeta} in Lemma~\ref{Exrff:lem:cN} we have
  \begin{align*}
    \frac{c_W (2/\mu_W)^\alpha}{C_\mathrm{pair}^{(\mathrm{Beta})}} \( \frac{1-x}{x} \)^\alpha
    = \frac{8}{B(2-\alpha,\alpha)} \frac1{\alpha} \( \frac{1-x}{x} \)^\alpha
    = \frac{8}{B(2-\alpha,\alpha)} \int_x^1 \frac1{y^2} y^{1-\alpha} (1-y)^{\alpha-1} \, dy .
  \end{align*}
\end{proof}


\subsection{Proof of Proposition~\ref{prop:diploidSchweinsberg}}
\label{sProofSchweinsberg}

In this section we prove Lemma~\ref{lemma:SchweinsbergcN} and Proposition~\ref{prop:diploidSchweinsberg},  the main convergence result for the
diploid population model of Section~\ref{section:supercriticalGaltonWatson}, where potential offspring to parental couples are generated as in a supercritical Galton-Watson process and then pruned. 
 In order to prove the main
Proposition~\ref{prop:diploidSchweinsberg} we need two lemmas in addition to Lemma~\ref{lemma:SchweinsbergcN}. The proof of these auxiliary lemmas
is postponed to Section \ref{sSchweinsbergauxiliary}.

Our proofs in this section are to some extent parallel to
  those in \cite{Schweinsberg2003}, especially those of
  Lemmas~\ref{lemma:SchweinsbergcN} and \ref{lemma:SNbounds}.  We note
  however that the arguments are somewhat more involved, in particular
  due to the fact that $X_i^{(N)}, 1 \leq i \leq N$ are not
  independent.  Our proof of
  Proposition~\ref{prop:diploidSchweinsberg},~2.\ follows a slightly
  different route than that of its analogue,
  \cite[Thm.~4~(c)]{Schweinsberg2003} in that we verify
  condition~\eqref{eq:PhiNconv} on the law of the ranked offspring
  frequencies directly without recourse to the moment criterion
  \eqref{eq:Vfmcond}. One can alternatively prove
  Proposition~\ref{prop:diploidSchweinsberg} by verifying
  \eqref{eq:Vfmcond} but this route appears more cumbersome
  here because the $X_i$ are not independent in our set-up.
\smallskip

Recall that the offspring $V_{i,j}^{(N)}$ of individual $i$ and $j$ are sampled from the ``potential offspring" $X_{i,j}^{(N)},$ which are i.i.d..
We write $V_i^{(N)} = \sum_{j\neq i}^N V_{i,j}^{(N)}$ as before. Our first observation concerns  large deviations of the total number of potential offspring $S_N= \sum_{ 1\le i<j\le N} X_{i,j}^{(N)}$ of (\ref{S_Ndef}) which by (\ref{ES_N}) has $\EE[S_N] \sim N \mu$ with $\mu>1.$
\begin{lemma}
  \label{lemma:SNbounds}
  For any $\varepsilon>0$ there exist constants $c=c(\varepsilon) >0$ and $C=C(\varepsilon)<\infty$
  such that
  \begin{align}
    \label{lemma:SNbounds.small}
    \PP\big( S_N \leq (1-\varepsilon) N \mu  \big) & \leq e^{-c N} \quad \text{ for all $N$ large enough} \\
    \intertext{and under Assumption~\eqref{ass:Xtail} we have}
    \label{lemma:SNbounds.large}
    \PP\big( S_N \geq (1+\varepsilon) N \mu  \big) & \leq C N^{1-\alpha} \quad \text{ for all $N$ large enough}
  \end{align}
  (under Assumption~\eqref{ass:Xvariance}, \eqref{lemma:SNbounds.large} holds with the right-hand side
  replaced by $C N^{-1}$, i.e., by formally setting $\alpha=2$.)
  \smallskip

  The same bounds hold for $S_N' := S_N - X^{(N)}_{1,2}$ and for $S_N'' := S_N - X^{(N)}_{1}$.
\end{lemma}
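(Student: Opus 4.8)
The plan is to view $S_N$ as a sum of $\binom{N}{2}$ i.i.d.\ nonnegative summands $X_{i,j}^{(N)}$, each nonzero with probability $p_N\sim c_{X,1}/N$ and, conditioned on being nonzero, distributed as $X$, so that $\mathbb{E}[S_N]=\binom{N}{2}p_N\mu_X\sim\mu N$ by \eqref{ES_N}. Since $S_N'=\sum_{(i,j)\neq(1,2),\,i<j}X_{i,j}^{(N)}$ and $S_N''=\sum_{2\le i<j\le N}X_{i,j}^{(N)}$ are again sums of $\binom{N}{2}-1$, respectively $\binom{N-1}{2}$, i.i.d.\ copies of the same summand, and both still have expectation asymptotic to $\mu N$, every estimate below applies to them verbatim; I would only spell out the argument for $S_N$.

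\emph{Lower tail.} For \eqref{lemma:SNbounds.small} I would truncate: fix $K$ with $\mathbb{E}[X\wedge K]\ge(1-\varepsilon/2)\mu_X$ (possible since $\mu_X<\infty$ under either \eqref{ass:Xvariance} or \eqref{ass:Xtail}) and set $Y_{i,j}:=(X_{i,j}^{(N)}\wedge K)/K\in[0,1]$, so that $S_N\ge K\sum_{i<j}Y_{i,j}$ and $\mathbb{E}[\sum_{i<j}Y_{i,j}]\ge(1-\varepsilon/2+o(1))\mu N/K$. Because $(1-\varepsilon/3)(1-\varepsilon/2)>1-\varepsilon$, for $N$ large the event $\{S_N\le(1-\varepsilon)\mu N\}$ is contained in $\{\sum_{i<j}Y_{i,j}\le(1-\varepsilon/3)\mathbb{E}[\sum_{i<j}Y_{i,j}]\}$, whose probability is at most $\exp(-\tfrac{\varepsilon^2}{18}\mathbb{E}[\sum_{i<j}Y_{i,j}])=e^{-\Theta(N)}$ by the multiplicative Chernoff bound for sums of independent $[0,1]$-valued random variables. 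This yields \eqref{lemma:SNbounds.small} after adjusting $c(\varepsilon)$.

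\emph{Upper tail.} Under \eqref{ass:Xvariance} this is immediate from Chebyshev: $\mathrm{Var}(X_{1,2}^{(N)})\le p_N\mathbb{E}[X^2]=O(1/N)$ gives $\mathrm{Var}(S_N)=O(N)$, hence $\PP(S_N\ge(1+\varepsilon)\mu N)=O(N^{-1})$, i.e.\ the claim with ``$\alpha=2$''. Under \eqref{ass:Xtail} I would use a ``one big jump'' truncation at level proportional to $N$: with $\bar X_{i,j}:=X_{i,j}^{(N)}\ind(X_{i,j}^{(N)}\le\tfrac{\varepsilon}{2}N)$ one has $S_N=\sum_{i<j}\bar X_{i,j}$ on the event that no summand exceeds $\tfrac{\varepsilon}{2}N$, so
\[
\PP\big(S_N\ge(1+\varepsilon)\mu N\big)\le\binom{N}{2}\,\PP\big(X_{1,2}^{(N)}>\tfrac{\varepsilon}{2}N\big)+\PP\Big(\sum_{i<j}\bar X_{i,j}\ge(1+\varepsilon)\mu N\Big).
\]
The first term is $\binom{N}{2}p_N\PP(X>\tfrac{\varepsilon}{2}N)=O(N\cdot N^{-\alpha})=O(N^{1-\alpha})$ by \eqref{ass:Xtail}. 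For the second, regular variation of the tail of $X$ with index $\alpha\in(1,2)$ gives $\mathbb{E}[X\ind(X>\tfrac{\varepsilon}{2}N)]=O(N^{1-\alpha})$, so $\mathbb{E}[\sum_{i<j}\bar X_{i,j}]=\mathbb{E}[S_N]-O(N^{2-\alpha})\le(1+\tfrac{\varepsilon}{2})\mu N$ for $N$ large, and $\mathbb{E}[X^2\ind(X\le\tfrac{\varepsilon}{2}N)]=O(N^{2-\alpha})$, so $\mathrm{Var}(\bar X_{1,2})\le p_N\mathbb{E}[X^2\ind(X\le\tfrac{\varepsilon}{2}N)]=O(N^{1-\alpha})$ and $\mathrm{Var}(\sum_{i<j}\bar X_{i,j})=O(N^{3-\alpha})$. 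Chebyshev then bounds the second term by $O(N^{3-\alpha})/O(N^2)=O(N^{1-\alpha})$, giving \eqref{lemma:SNbounds.large}.

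The one genuinely delicate point is the heavy-tailed upper tail: the truncation must be at level of order exactly $N$, and one needs the regular variation of the tail of $X$ to see that both the ``big-jump'' probability and the fluctuation of the truncated sum (controlled through its second moment alone, via Chebyshev) are of the same order $N^{1-\alpha}$. Everything else — including the passage to $S_N'$ and $S_N''$ — is routine bookkeeping, using only that these are i.i.d.\ sums with mean asymptotic to $\mu N$.
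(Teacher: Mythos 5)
Your proof is correct, but it follows a genuinely different route from the paper's. The paper conditions on the Bernoulli indicators $A^{(N)}_{i,j}$ in the representation \eqref{eq:Xijrepr}, invokes the concentration bound \eqref{lemma:coinscheme.eq2} of Lemma~\ref{lemma:coinscheme} to reduce $S_N$ (up to an exponentially small error) to a sum of $\approx c_{X,1}N/2$ i.i.d.\ copies of $X$, and then cites external results: \cite[Lemma~5]{Schweinsberg2003} for the lower tail and Nagaev's one-sided large deviation theorems \cite[Thms.~1 and 2]{Nagaev82} for the upper tail; the claims for $S_N'$ and $S_N''$ are dispatched via $S_N'' \mathop{=}^d S_{N-1}$ and the sandwich $S_N'' \le S_N' \le S_N$. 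You instead work directly with the $\binom{N}{2}$ i.i.d.\ compound summands $X^{(N)}_{i,j}$: a truncation at a fixed level $K$ plus the multiplicative Chernoff bound for the lower tail, and a ``one big jump'' truncation at level $\tfrac{\varepsilon}{2}N$ plus a union bound and Chebyshev on the truncated sum for the heavy-tailed upper tail. Your computations check out: the truncated mean loss is $O(N^{2-\alpha})=o(N)$, the truncated second moment gives $\mathrm{Var}\big(\sum_{i<j}\bar X_{i,j}\big)=O(N^{3-\alpha})$, and both contributions are of the claimed order $N^{1-\alpha}$ (respectively $N^{-1}$ under \eqref{ass:Xvariance}). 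What each approach buys: yours is entirely self-contained and elementary, avoiding both Nagaev's theorem and the auxiliary Lemma~\ref{lemma:coinscheme}; the paper's is shorter on the page and reuses Nagaev's machinery, which is in any case needed elsewhere (e.g.\ in Lemma~\ref{lemma:randomsumtails}) and delivers the exact first-order tail asymptotics rather than merely an upper bound of the right order. Your treatment of $S_N'$ and $S_N''$ as i.i.d.\ sums with the same asymptotic mean is equally valid.
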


We note that on $\{S_N \ge N\}$, conditioned on the $X^{(N)}_{k,\ell}$'s, each $V^{(N)}_i$
is hypergeometric. More precisely, if $\mathrm{hypergeom}(n,m,x)$  is the number of white balls in $n$ draws without replacement from an urn that
contains $m$ balls of which $x$ are white balls and $m-x$ are black balls then $V^{(N)}_i \sim \mathrm{hypergeom}(N,S_N,X^{(N)}_i)$ conditionally
on the $X^{(N)}_{k,\ell}$'s. However, the $V^{(N)}_i$'s for different $i$ are not independent.
\medskip

We now consider $A^{(N)}_{i,j}$, $1 \le i < j \le N$ to be i.i.d.\ $\sim \mathrm{Ber}(p_N)$
(for other $i \neq j \in [N]$ put $A^{(N)}_{j,i} = A^{(N)}_{i,j}$, $A^{(N)}_{i,i}=0$), as well as $Y^{(N)}_{i,j}$, $i, j\in \NN$ to be independent\ copies of
$X$ from \eqref{eq:Xijstructure}, independent of the $A^{(N)}_{i,j}$'s.
A convenient parametrisation of the set-up from \eqref{eq:Xiid} and \eqref{eq:Xijstructure} is to set
\begin{equation}
  \label{eq:Xijrepr}
  X_{i,j}^{(N)} = A^{(N)}_{i,j} Y^{(N)}_{i,j}.
\end{equation}
Note that
\begin{equation}
\label{eq:A^Ndef}
A^{(N)}_i := \sum_{j=1}^N A_{i,j}^{(N)},
\end{equation}
the number of different ``potential partners'' of individual $i$,
is $\mathrm{Bin}(N-1,p_N)$-distributed,
\begin{equation}
\label{A^N}
A^{(N)} := \sum_{1\leq i < j \le N} A_{i,j}^{(N)},
\end{equation}
the total number of ``potential offspring-generating'' pairs,
is $\mathrm{Bin}\big(N(N-1)/2,p_N\big)$-distributed.
\smallskip

\begin{lemma}
  \label{lemma:coinscheme}
  Let $A^{(N)}_{i,j}$, $1 \le i < j \le N$ be i.i.d.\ $\sim \mathrm{Ber}(p_N)$ with
  $p_N \sim c_{X,1}/N$ and $A^{(N)}_{j,i} = A^{(N)}_{i,j}$ for $j>i$ as defined above.
  Then
  \begin{align}
    \label{lemma:coinscheme.eq1}
    \PP\Big( \exists\, k \le N \, : \; {\textstyle \sum\limits_{i \neq k}^N} A^{(N)}_{i,k} \ge \log N \Big)
    = O(N^{-b})
  \end{align}
  for every $b>0$.
  \smallskip

  For $A^{(N)}= \sum_{1 \le i < j \le N} A^{(N)}_{i,j}$ as in (\ref{A^N}) we have
  $\EE[A^{(N)}] = \binom{N}{2} p_N \sim c_{X,1} N/2$ and
  \begin{align}
    \label{lemma:coinscheme.eq2}
    \PP\( \big| A^{(N)} - \EE[A^{(N)}]\big| > \varepsilon \EE[A^{(N)}] \)
    \le 2 \exp\( - \frac{\varepsilon^2}{3} \EE[A^{(N)}]\).
  \end{align}
  for all $N$ and $0< \varepsilon \le 1/2$.
\end{lemma}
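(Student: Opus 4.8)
The plan is to reduce both assertions to classical Chernoff-type estimates for binomial random variables; no ingredient beyond that is needed. For the first assertion I would first observe that, for each fixed $k$, $A^{(N)}_k := \sum_{i\neq k}^N A^{(N)}_{i,k}$ is a sum of $N-1$ independent $\mathrm{Ber}(p_N)$ variables (one for each unordered pair $\{i,k\}$), so $A^{(N)}_k \sim \mathrm{Bin}(N-1,p_N)$ with mean $(N-1)p_N \to c_{X,1}$. Since the event in \eqref{lemma:coinscheme.eq1} is the union over $k\in\{1,\dots,N\}$ of $\{A^{(N)}_k \ge \log N\}$, a union bound — which needs no independence between the $A^{(N)}_k$ — reduces the problem to showing that $N\,\PP(A^{(N)}_1 \ge \log N)$ decays faster than any fixed power of $N$.

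For that I would invoke the elementary upper tail bound $\PP(\mathrm{Bin}(n,p) \ge t) \le (enp/t)^t$, valid for (integer) $t \ge np$, which follows from the exponential Markov inequality. With $np = (N-1)p_N = O(1)$ and $t = \lceil\log N\rceil \to \infty$ the ratio $enp/t$ tends to $0$, and this forces super-polynomial decay: given $b>0$, for $N$ large enough that $\log N \ge e^{b+2}(N-1)p_N$ one gets $\PP(A^{(N)}_1 \ge \log N) \le (e(N-1)p_N/\log N)^{\log N} \le e^{-(b+1)\log N} = N^{-(b+1)}$, and multiplying by $N$ yields $O(N^{-b})$. The only slightly delicate point — really the crux of part one — is recognising that the factorial in the binomial tail forces faster-than-polynomial decay once the threshold grows like $\log N$ while the mean stays bounded; everything else (the union bound, the choice of $N$ depending on $b$) is routine.

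For the second assertion I would note that $A^{(N)} = \sum_{1\le i<j\le N} A^{(N)}_{i,j}$ is a sum of $\binom{N}{2}$ independent $\mathrm{Ber}(p_N)$ variables, hence $A^{(N)} \sim \mathrm{Bin}\!\big(\binom{N}{2}, p_N\big)$, so $\EE[A^{(N)}] = \binom{N}{2} p_N$, and $\binom{N}{2} p_N \sim c_{X,1} N/2$ follows from $p_N \sim c_{X,1}/N$. The bound \eqref{lemma:coinscheme.eq2} is then exactly the standard multiplicative Chernoff inequality for a sum of independent $\{0,1\}$-valued random variables: writing $\mu := \EE[A^{(N)}]$, one has $\PP(A^{(N)} \ge (1+\varepsilon)\mu) \le e^{-\varepsilon^2\mu/3}$ and $\PP(A^{(N)} \le (1-\varepsilon)\mu) \le e^{-\varepsilon^2\mu/2}$ for $0<\varepsilon\le 1$, and adding the two (bounding $e^{-\varepsilon^2\mu/2}\le e^{-\varepsilon^2\mu/3}$) gives the claimed two-sided inequality; I would cite a standard reference such as \cite{DemboZeitouni98} for these estimates. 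There is no genuine obstacle in this part — it is a direct quotation of a textbook bound — so the write-up reduces to the identification of $A^{(N)}$ as a binomial and the asymptotics of its mean.
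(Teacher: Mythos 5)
Your proposal is correct and follows essentially the same route as the paper: a union bound over $k$ combined with an exponential-moment (Chernoff) tail bound for $\mathrm{Bin}(N-1,p_N)$ at threshold $\log N$ for the first claim (your bound $(enp/t)^t$ is algebraically the same as the paper's $\exp\bigl(-y(\log(y/\overline{c}_p)-1)\bigr)$), and the standard two-sided multiplicative Chernoff inequality for the binomial $A^{(N)}$ for the second. The only cosmetic difference is that the paper derives the two-sided bound from the moment generating function estimate $\EE[e^{\beta A^{(N)}}]\le\exp\bigl(\EE[A^{(N)}](e^\beta-1)\bigr)$ with $\lambda=\varepsilon$ and $e^\beta-1-\beta\le\tfrac23\beta^2$ for $|\beta|\le 1/2$, whereas you quote it as a textbook fact.
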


In the following we will drop the superscript $(N)$ often for notational simplicity.

\begin{proof}[Proof of Proposition~\ref{prop:diploidSchweinsberg}]
  1.\ In view of Lemma~\ref{lemma:SchweinsbergcN} and
  Condition~\eqref{eq:Vfmrel} it suffices to verify that $\EE[(V_1)_3]
  = o(N)$ for then $\phi_1(3) = \lim_N \EE[(V_1)_3]/(c_N N^2) = 0$
  and this implies \eqref{eq:PhiNconv} and \eqref{eq:Vfmrel} with $\Xi'=\delta_{\mathbf{0}}$
  (see e.g.\ \cite[Thm.~4~(b)]{Mohle00}).
  This can be checked along the lines of the proof of
  \cite[Proposition~7]{Schweinsberg2003}.  We simply need that for some $0<a<1$
  \begin{equation}
  \label{X_1^2unif}
  \limsup_{N \rightarrow \infty} \EE[X_1^2 {\ind}_{\{X_1  \geq N^a\}}] =0.
  \end{equation}
  To see that this is true we use  \eqref{eq:Xijstructure} and \eqref{ass:Xvariance} to estimate
  \begin{eqnarray*}
  \EE[X_1^2 {\ind}_{\{X_1  \geq N^a\}}]
  &=& \(N-1\) \EE\left[ X_{1,2}^2\(\ind_{\{X_{1,2}  \geq N^a/2\} }+\ind_{ \{ \sum_{j=3}^N X_{1,j}  \geq N^a/2 \} }\)\right]\\
  &\leq& \(N-1\) p_N  \EE\left[ X^2 \ind_{\{X  \geq N^a/2\}}\right] + \(N-1\) p_N \(\frac{2}{N^a}\)^2 \EE[X^2] \rightarrow 0
  \end{eqnarray*}
  as $N \rightarrow \infty.$ This shows \eqref{X_1^2unif} and thus the claim.

  \bigskip

\noindent
2.\
  Let $V_{(1)} \ge V_{(2)} \ge \cdots \ge V_{(N)}$ be the ranked $V_i$'s.
  We will verify that \eqref{eq:PhiNconv} holds with
  \[
  \Xi'(dx) = \int_{[0,1]} \delta_{(x/2,x/2,0,0,\dots)} \, \mathrm{Beta}(2-\alpha,\alpha)(dx)
  \]
  (and then Proposition~\ref{prop:diploidSchweinsberg},~2.\ follows from our main result, Theorem~\ref{thm:res},
  via Condition~\eqref{eq:PhiNconv}).

  This in turn will follow if we show that for every $k \in \NN$, $1 > x_1 \ge x_2 \ge x_3 \ge \dots \ge x_k > 0$ we have
  \begin{align}
    & \lim_{N\to\infty}\frac1{2c_N} \PP\( {\textstyle \frac{V_{(1)}}{2N} > x_1, \frac{V_{(2)}}{2N} > x_2, \cdots,
      \frac{V_{(k)}}{2N} > x_k} \) \notag \\
    & \hspace{2em} = \int_{\Delta} \ind(y_1 \ge x_1, y_2 \ge x_2,\dots, y_k \ge x_k) \frac{1}{(y,y)} \Xi'(dy) \notag \\
    & \hspace{2em} = \ind(x_1 \leq 1/2, k \leq 2) \int_{2x_1}^1 \frac{2}{y^2} \, \mathrm{Beta}(2-\alpha,\alpha)(dy) \notag \\
    \label{proofprop:beta.target1}
    & \hspace{2em} = \ind(x_1 \leq 1/2, k \leq 2) \frac{2}{\alpha \Gamma(2-\alpha)\Gamma(\alpha)} \( \frac{1-2x_1}{2 x_1}\)^\alpha.
  \end{align}
Here we have used in the last line that with the substitution of $z = (1-y)/y$
\begin{eqnarray}
\nonumber
\Gamma(2-\alpha)\Gamma(\alpha) \int_{2x_1}^1 \frac{1}{y^2} \, \mathrm{Beta}(2-\alpha,\alpha)(dy)
&=&   
 \int_{2 x_1}^1\( \frac{1-y}{y} \)^{\alpha-1} \frac{dy}{y^2} \\
 \label{intcalc}
&=& \int^{ (1-2x_1)/2x_1}_0 z^{\alpha-1}dz= \alpha^{-1}\( \frac{1-2x_1}{2 x_1}\)^\alpha.
\end{eqnarray}
  The intuition behind this result is the following:
  $V_{i,j} > Ny$ with $y>0$ is possible (only) if $X_{i,j}$ is of order $N$ (and then with overwhelming
  probability the sum of all other $X_{k,\ell}$ with $\{k,\ell\} \neq \{i,j\}$ is $\approx N\mu$
  and both $V_i$ and $V_j$ are $\approx V_{i,j}$ up to terms which are $o(N)$); we then have
  \[
  V_{i,j} \approx N \frac{X_{i,j}}{X_{i,j}+ N\mu}
  \]
  (using fluctuation bounds for hypergeometric distributions), thus
  \[
  \text{$V_{i,j} > N y$ ``$\!\iff\!$'' $\frac{X_{i,j}}{X_{i,j}+ N\mu} > y$
    $\iff$ $X_{i,j} > N\mu \frac{y}{1-y}$}.
  \]
  Furthermore
  \[
  \PP\( \frac{V_{(1)}}{2N} > x_1, \frac{V_{(2)}}{2N} > x_2\) \approx
  \PP\( \exists\: \text{ exactly one pair } i < j \le N\text{ with }
  V_{(1)} \approx V_{(2)} \approx V_{i,j} > 2N x_1\) + o(N^{1-\alpha}).
  \]
Recall $A^{(N)}$ from  \eqref{eq:A^Ndef}.  By Lemma~\ref{lemma:coinscheme}, $A^{(N)} \approx c_{X,1} N/2$, then
  \begin{align*}
    \PP\( \frac{V_{(1)}}{2N} > x_1, \frac{V_{(2)}}{2N} > x_2\)
    & \sim \frac{c_{X,1} N}{2} \PP\( X > N\mu \frac{2x_1}{1-2x_1}\) \notag \\
    &
    \sim \frac{c_{X,1} N}{2}\, c_{X,2} N^{-\alpha} \mu^{-\alpha} \( \frac{2x_1}{1-2x_1} \)^{-\alpha}
    = \frac{c_{X,1} c_{X,2}}{2 \mu^\alpha} \( \frac{1-2x_1}{2x_1} \)^{\alpha} N^{1-\alpha}
  \end{align*}
  (note that then because $x_1 \ge x_2$, the event $\big\{ V_{(1)} > 2N x_1, V_{(2)} < 2N x_2\big\}$
  is very unlikely).

  Combining this with Lemma~\ref{lemma:SchweinsbergcN} suggests \eqref{proofprop:beta.target1}
  for $k=2$, namely
  \begin{align*}
    \frac1{2 c_N} \PP\( \frac{V_{(1)}}{2N} > x_1, \frac{V_{(2)}}{2N} > x_2\)
    & \approx \frac{4 \mu^\alpha}{c_{X,1} c_{X,2} \alpha B(2-\alpha,\alpha)} N^{\alpha-1}
    \, \frac{c_{X,1} c_{X,2}}{2 \mu^\alpha} \( \frac{1-2x_1}{2x_1} \)^{\alpha} N^{1-\alpha}
    \notag \\
    & = \frac{2}{\alpha B(2-\alpha,\alpha)} \( \frac{1-2x_1}{2x_1} \)^{\alpha}.
  \end{align*}
  For $\varepsilon > 0$,
  \[
  \text{$\PP(V_{(3)} > \varepsilon N) =o(N^{1-\alpha})$
    and $\PP(V_{(1)} > Ny, N(1-\varepsilon)y \ge V_{(2)}) = o(N^{1-\alpha})$}
  \]
  (both events require essentially that there are at least two distinct pairs
  $\{i,j\} \neq \{k,\ell\}$ with $X_{i,j}, X_{k,\ell} \ge (\varepsilon/2) N$, say;
  observe that $V_{i,j} \le X_{i,j}$ by definition).

  \medskip

  A more detailed argument runs as follows:
  Let
  \[
  B_{i,j}(y,\varepsilon) = B_{i,j}^{(N)}(y,\varepsilon) := \left\{ X_{i,j}^{(N)} > N\mu \frac{2y}{1-2y}, S_N - X_{i,j}^{(N)} \le N\mu (1+\varepsilon)\right\}
  \]
  for $y\in (0,1/2)$, $\varepsilon>0$.
  In the following we again drop the index $N$ and note that
  \begin{align}
    \label{eq:PBNij}
    \PP\big(B_{i,j}(y,\varepsilon)\big)
    \sim \PP\( X_{1,2} > N\mu \frac{2y}{1-2y} \)
    \sim \frac{c_{X,1}}{N} c_{X,2} \frac{(1-2y)^\alpha}{(2y)^\alpha \mu^\alpha} N^{-\alpha}
    \quad \text{as } N \to \infty
  \end{align}
  because
  \begin{align*}
    & \PP\Big( X_{i,j} > N\mu \frac{2y}{1-2y}, S_N - X_{i,j} > N\mu (1+\varepsilon) \Big) \\
    & = \PP\Big( X_{1,2} > N\mu \frac{2y}{1-2y} \Big) \PP\big( S_N - X_{1,2} > N\mu (1+\varepsilon)\big)
    \le C \frac{1}{N} \frac{(1-2y)^\alpha}{(2y)^\alpha \mu^\alpha} N^{-\alpha} \cdot N^{1-\alpha}
    = C' N^{-2\alpha}
  \end{align*}
  by Assumption~\eqref{ass:Xtail} and Lemma~\ref{lemma:SNbounds} using the fact that $\alpha>1.$
  \smallskip

  Since the $V_{k,\ell}$'s are hypergeometric conditional on the $X_{k,\ell}$'s with
  \begin{align*}
    \EE\big[ V_{i,j} \,\big| \, \text{$X_{k,\ell}$'s}\big] = N \frac{X_{i,j}}{S_N}
    = N \frac{X_{i,j}}{X_{i,j} + (S_N-X_{i,j})}
    \quad \text{on $\{S_N \ge N\}$}
  \end{align*}
  and the right-hand side is on $B_{i,j}(y,\varepsilon) \cap \{S_N \ge N\}$ bounded
  below by
  \begin{align*}
    N \frac{X_{i,j}}{X_{i,j} + N\mu (1+\varepsilon)}
    \ge N \frac{2y/(1-2y)}{2y/(1-2y) + (1+\varepsilon)} \geq N \frac{2y}{1 + \varepsilon},
  \end{align*}
  we have by fluctuation bounds for hypergeometric laws (e.g.\ as recalled in \cite[Lemma~19]{Schweinsberg2003}; see
  \cite{Chvatal79, Hoeffding63}) and by conditioning on $X_{k,\ell}$'s that
  \begin{align}
  \label{proofprop:beta.lb0}
    \PP\Big( \big\{ V_{i,j} \leq 2Ny/(1+2\varepsilon) \big\} \cap B_{i,j}(y,\varepsilon) \cap \{S_N \ge N\} \Big) \le e^{-c(\varepsilon) N}
  \end{align}
  with $c(\varepsilon)>0$. Using \eqref{proofprop:beta.lb0} as well as the fact that  $\{V_{i,j} \ge m\} \subset \{V_i \ge m, V_j \ge m\} \subset \{ V_{(1)} \ge m, V_{(2)}\ge m\}$
  for every $m$ and all $i<j \le N$ we obtain that
  \begin{align}
    \label{proofprop:beta.lb1}
    &\limsup_{N\to\infty} \frac{1}{2c_N} \PP\Big( \bigcup_{i<j\le N} B_{i,j}\big(x_1(1+2\varepsilon),\varepsilon\big) \cap \{S_N \ge N\} \Big) \\
    \nonumber
    &= \limsup_{N\to\infty} \frac{1}{2c_N} \PP\Big( \bigcup_{i<j\le N} \big\{ V_{i,j} > 2Nx_1 \big\}  \cap B_{i,j}\big(x_1(1+2\varepsilon),\varepsilon\big) \cap \{S_N \ge N\} \Big)\\
    \nonumber
    &\leq \liminf_{N\to\infty} \frac{1}{2c_N} \PP\( \frac{V_{(1)}}{2N} > x_1, \frac{V_{(2)}}{2N} > x_1 \).
  \end{align}
  Furthermore $\PP(S_N<N) \le e^{-c N}$ for $N$ large enough by Lemma~\ref{lemma:SNbounds} and
  for $\{i,j\} \neq \{k,\ell\}$ we have  that
  \begin{align}
   \nonumber
    \PP\Big( B_{i,j}\big(y,\varepsilon\big) \cap B_{k,\ell}\big(y,\varepsilon\big) \Big)
   &\le     \PP\Big( X_{i,j} > N\mu \tfrac{2y}{1-2y}, X_{k,\ell} > N\mu \tfrac{2y}{1-2y} \Big) \\
    \nonumber
    &\le \( C \frac{1}{N} c_{X,2} \frac{(1-2y)^\alpha}{(2y)^\alpha \mu^\alpha} N^{-\alpha} \)^2.
  \end{align}
  This gives
  \begin{align}
     & \PP\( \bigcup_{i<j\le N} B_{i,j}\big(x_1(1+2\varepsilon),\varepsilon\big) \cap \{S_N \ge N\} \) \notag \\
     & = \sum_{i<j\le N} \PP\Big( B_{i,j}\big(x_1(1+2\varepsilon),\varepsilon\big) \Big) + O\Big( N^2 \cdot N^{-2-2\alpha}+ e^{-c N}\Big)
     \notag \\
     \label{proofprop:beta.lb2}
     & = \binom{N}{2} \PP\Big( B_{1,2}\big(x_1(1+2\varepsilon),\varepsilon\big) \Big) + o(N^{1-\alpha}).
  \end{align}
  Combining \eqref{eq:PBNij}, \eqref{proofprop:beta.lb1}, \eqref{proofprop:beta.lb2} with the calculation in \eqref{intcalc}
  and Lemma~\ref{lemma:SchweinsbergcN} yields
  \begin{align}
    \label{proofprop:beta.lb3}
    \liminf_{N\to\infty} \frac{1}{2c_N} \PP\( \frac{V_{(1)}}{2N} > x_1, \frac{V_{(2)}}{2N} > x_1 \)
    \ge \int_{2x_1 (1+2\varepsilon)}^1 \frac{2}{u^2} \, \mathrm{Beta}(2-\alpha,\alpha)(du) .
  \end{align}
  \medskip

 \noindent
  For the matching upper bound we let
  \begin{align*}
    D^{(N)} := \bigcap_{i < j \le N} \big\{ S_N - X_{i,j}^{(N)} \ge (1-\varepsilon) \mu N \big\}
    \cap \bigcap_{k \le N} \big\{ A^{(N)}_k < \log N \big\}. 
  \end{align*}
 Note that if we choose $\varepsilon$ small such that $(1-\varepsilon) \mu>1$ we in particular have
 $S_N \geq N$ on $D^{(N)}.$ Also,
  \begin{align}
    \label{proofprop:beta.ub1}
    \PP\big((D^{(N)})^c \big) \leq N^2 e^{-c N} + N^{-b}
  \end{align}
  by Lemmas\ \ref{lemma:SNbounds} and \ref{lemma:coinscheme}
  where $c>0$ and the constant $b>0$ can be chosen (much) larger than $\alpha-1$.
  \smallskip
  Choose $\delta \in \big(\tfrac{\alpha-1}{2}, \alpha-1\big)$.
  \smallskip
 Then
  \begin{align*}
    E^{(N)}  := \Big\{ & \exists \, i < j \le N, \, k < \ell \le N \text{ with } \{i,j\} \neq \{k,\ell\}
    \; \text{ and } \; X_{i,j} \ge N^{(1+\delta)/\alpha}, X_{k,\ell} \ge N^{(1+\delta)/\alpha} \Big\}
  \end{align*}
  due to \eqref{eq:Xijstructure} and  \eqref{ass:Xtail} that
  \begin{align}
    \label{proofprop:beta.ub3}
    \PP\big( E^{(N)} \big) \le \big(N^2\big)^2 \, \( \frac1N \frac{C}{(N^{(1+\delta)/\alpha})^{\alpha}} \)^2
    = O( N^{-2\delta}) = o\big(N^{1-\alpha}\big)
  \end{align}
  because $\delta>(\alpha-1)/2$.
  We also have
  \begin{align}
    \label{proofprop:beta.ub4}
    D^{(N)} \cap (E^{(N)})^c \cap \Big\{V_{(1)} \ge 2Ny \Big\}
    \subset \bigcup_{i < j \le N} \Big\{ V_{i,j} \ge   (1-\varepsilon) 2Ny \Big\}
  \end{align}
  for $N$ large enough since on $D^{(N)} \cap (E^{(N)})^c $ every $V_i$ consists of the sum of at most $\log N$
  many nonzero $V_{i,j}$ (since this is true for $X_i$ and $X_{i,j}$) of which only one can be larger than $N^{(1+\delta)/\alpha}$
  (note that $ (1+\delta)/\alpha<1$ so that the largest $V_{i,j}$ needs to be of the same order as $V_{(1)}$).

  We now set
  \begin{equation*}
  \tilde{B}_{i,j}(y,\varepsilon) = \tilde{B}_{i,j}^{(N)}(  y,\varepsilon) := \big\{ X_{i,j}^{(N)} < N\mu  \frac{2y}{1-2y}, S_N - X_{i,j}^{(N)} \ge N\mu (1-\varepsilon)\big\}.
  \end{equation*}
  Then we have  by fluctuation bounds for hypergeometric distributions, analogous to the argument for \eqref{proofprop:beta.lb0}, that
  \begin{equation}
   \label{proofprop:beta.ub5}
  \PP\( \Big\{V_{i,j} \geq (1-\varepsilon)2Ny \Big\} \cap  \tilde{B}_{i,j}((1-2\varepsilon) y,\varepsilon) \cap \big\{ S_N \geq N \big\} \) \leq e^{-c(\varepsilon)N}.
  \end{equation}

  Combining \eqref{proofprop:beta.ub1}, \eqref{proofprop:beta.ub3} and then
  \eqref{proofprop:beta.ub4}
  we see that
  \begin{align*}
  \notag
    \PP\big( V_{(1)} \ge 2N y\big) & \le
    \PP\big((D^{(N)})^c \big) + \PP\big( E^{(N)} \big) +
    \PP\Big( \{ V_{(1)} \ge 2N y\} \cap D^{(N)} \cap (E^{(N)})^c\Big) \\
     \notag
    & \le
     \PP\Big( \Big\{ V_{(1)} \ge 2N y \Big\} \cap D^{(N)} \cap (E^{(N)})^c\cap \bigcap_{i < j \le N} \Big\{ X_{i,j} <  \frac{2y(1-2\varepsilon)}{1-2y(1-2\varepsilon)} \mu N \Big\}\Big) \\
    &\;\;+\PP\Big( \bigcup_{i < j \le N} \Big\{ X_{i,j} \ge  \frac{2y(1-2\varepsilon)}{1-2y(1-2\varepsilon)} \mu N \Big\}\Big)
    + o\big(N^{1-\alpha}\big) \notag \\
    \notag
    & \leq \PP\(    \bigcup_{i < j \le N} \Big\{V_{i,j} \geq (1-\varepsilon)2Ny \Big\} \cap  \bigcap_{i < j \le N} \tilde{B}_{i,j}((1-2\varepsilon) y,\varepsilon) \cap \big\{ S_N \geq N \big\}\) \\
   &\;\;+ \binom{N}{2} \PP\Big( X_{1,2} \ge  \frac{2y(1-2\varepsilon)}{1-2y(1-2\varepsilon)} \mu N \Big) + o\big(N^{1-\alpha}\big),
  \end{align*}
  hence, by \eqref{proofprop:beta.ub5},
  \begin{align} \notag
    \limsup_{N\to\infty} \frac1{2c_N} \PP\Big( \frac{V_{(1)}}{2N} > x_1, \frac{V_{(2)}}{2N} > x_2 \Big)
    & \le \limsup_{N\to\infty} \frac1{2c_N} \PP\Big( \frac{V_{(1)}}{2N} > x_1 \Big) \\
    & \le \limsup_{N\to\infty} \frac1{2c_N} \binom{N}{2}
    \PP\Big( X_{1,2} \geq  \frac{2x_1(1-2\varepsilon)}{1-2x_1(1-2\varepsilon)} \mu N \Big)
    \notag \\
     \label{proofprop:beta.ub6}
    & \le \int_{2x_1 (1-2\varepsilon)}^1 \frac{2}{u^2} \, \mathrm{Beta}(2-\alpha,\alpha)(du),
  \end{align}
where we have (as in the lower bound \eqref{proofprop:beta.lb3}) used \eqref{intcalc} and \eqref{eq:PBNij} as well as Lemma \ref{lemma:SchweinsbergcN}.
Now let $\varepsilon \downarrow 0$ in \eqref{proofprop:beta.lb3} and
  \eqref{proofprop:beta.ub6} to obtain \eqref{proofprop:beta.target1} for $k \leq 2$.

  \medskip
  \noindent
  Finally, for $\varepsilon > 0$ and $N$ sufficiently large, we have
  \begin{align*}
    \PP\big( V_{(3)} \ge \varepsilon N\big) \le \PP\big( \exists\, \text{distinct } i,j,k \le N \, : \:
    X_i, X_j, X_k \ge \varepsilon N\big)
    \le \PP\big( (D^{(N)})^c \big) + \PP\big(E^{(N)}\big) = o\big(N^{1-\alpha}\big)
  \end{align*}
  as above.  Combining again with Lemma~\ref{lemma:SchweinsbergcN},
  this completes the proof of \eqref{proofprop:beta.target1} for $k
  \geq 3$.
\end{proof}

\subsubsection{Proofs of auxiliary results}
\label{sSchweinsbergauxiliary}

In this section we prove Lemma~\ref{lemma:SchweinsbergcN}, \ref{lemma:SNbounds} and \ref{lemma:coinscheme}.
To start with we need one more lemma.

\begin{lemma}
\label{lemma:randomsumtails}
Let $Y_1, Y_2,\dots$ be independent\ copies of $Y>0$ with
\begin{equation*}
  \PP(Y>y) \sim c_Y y^{-\alpha},
\end{equation*}
where $c_Y \in (0,\infty)$, $\alpha \in (1,2)$. Let $B_N \sim \mathrm{Bin}(N,p_N)$ where
$p_N \sim c_p/N$ with $c_p \in (0,\infty)$ and set
\[
X_N := \sum_{i=1}^{B_N} Y_i.
\]
Then we have that
\begin{equation*}
  \forall \, \varepsilon > 0 \;\: \exists \, N_0, x_0 < \infty \; : \;\;
  \sup_{N\ge N_0} \sup_{x \ge x_0} \Big| \frac{\PP(X_N > x)}{c_p c_Y x^{-\alpha}} - 1 \Big| \le \varepsilon.
\end{equation*}
\end{lemma}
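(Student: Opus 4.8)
The plan rests on the \emph{single big jump} principle. Since $\EE[B_N]=Np_N\to c_p$, the random sum $X_N$ has only $O(1)$ summands in expectation, so heuristically its tail is produced by the event that one of the $Y_i$ is atypically large, giving $\PP(X_N>x)\approx\EE[B_N]\,\PP(Y>x)=Np_N\PP(Y>x)\sim c_p c_Y x^{-\alpha}$. To make this precise I would first reduce the claim to showing that
\[
\frac{\PP(X_N>x)}{Np_N\,\PP(Y>x)}\longrightarrow1\qquad\text{uniformly for }N\ge N_0,\ x\ge x_0;
\]
this suffices because $Np_N/c_p\to1$ (uniformly in $x$) and $\PP(Y>x)/(c_Y x^{-\alpha})\to1$ (uniformly in $N$), so multiplying the quotient above by $Np_N\PP(Y>x)/(c_p c_Y x^{-\alpha})$ only costs a factor tending to $1$ in the regime considered. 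Conditioning on $B_N$ and writing $S_k:=Y_1+\dots+Y_k$ (so $X_N=S_{B_N}$, the empty sum being $0$), and using $\sum_{k\ge1}k\,\PP(B_N=k)/(Np_N)=\EE[B_N]/(Np_N)=1$, one gets the exact identity
\[
\frac{\PP(X_N>x)}{Np_N\,\PP(Y>x)}-1=\sum_{k\ge1}\frac{\PP(B_N=k)}{Np_N}\Bigl(\frac{\PP(S_k>x)}{\PP(Y>x)}-k\Bigr),
\]
and the task is to bound the right-hand side uniformly.

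Two classical facts about the regularly varying (hence subexponential) law of $Y$ enter here: (i) for each fixed $k$, $\PP(S_k>x)/\PP(Y>x)\to k$ as $x\to\infty$; and (ii) Kesten's bound --- for every $\delta>0$ there is $C(\delta)<\infty$ with $\PP(S_k>x)\le C(\delta)(1+\delta)^k\PP(Y>x)$ for all $k\ge1$ and all $x\ge0$ (see e.g.\ Embrechts--Kl\"uppelberg--Mikosch, or Foss--Korshunov--Zachary). Given $\varepsilon>0$ I would split the sum at a cutoff $K$. For the tail $k>K$, bound $|\PP(S_k>x)/\PP(Y>x)-k|\le C'(\delta')(1+\delta')^k$ by absorbing the ``$+k$'' into the exponential, and then, for any $\theta>1$,
\[
\sum_{k>K}\frac{\PP(B_N=k)}{Np_N}\,C'(1+\delta')^k\le\frac{C'}{Np_N}\,\theta^{-K}\,\bigl(1+\theta(1+\delta')p_N\bigr)^{N}\le\frac{2C'}{c_p}\,e^{2\theta(1+\delta')c_p}\,\theta^{-K}
\]
for all $N$ large (using $Np_N\to c_p>0$), which is $<\varepsilon/2$ once $K=K(\varepsilon)$ is chosen large --- uniformly in $N$ large and in $x$. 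For the head $1\le k\le K$, fact (i) applied to these finitely many $k$ furnishes an $x_0$ with $|\PP(S_k>x)/\PP(Y>x)-k|<\varepsilon/2$ for $x\ge x_0$; since $\sum_{k\ge1}\PP(B_N=k)/(Np_N)=(1-(1-p_N)^N)/(Np_N)\le1$ for $N$ large, the head contributes $<\varepsilon/2$. Adding the two ranges gives the claimed uniform convergence, hence the lemma.

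The main obstacle is the uniformity in $N$: the natural $k$-dominating weight $\PP(B_N=k)(1+\delta)^k/(Np_N)$ still depends on $N$, so one cannot simply quote dominated convergence; instead one truncates at $K$ and controls the truncated tail uniformly in (large) $N$ through the exponential-moment estimate $\EE[(1+\delta)^{B_N}]=(1+\delta p_N)^N\le e^{2\delta c_p}$ and its refinement carrying the extra $\theta^{-K}$, while the finitely many head terms are handled by the subexponential asymptotics (i). Everything else --- the reduction in the first paragraph, $Np_N\to c_p$, and the bookkeeping of constants --- is routine. (Note that only regular variation, equivalently subexponentiality, of $Y$ is used; the restriction $\alpha\in(1,2)$ is not needed for this lemma.)
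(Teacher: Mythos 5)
Your proof is correct, but it follows a genuinely different route from the paper's. The paper proves the lemma via a quantitative form of Nagaev's one-sided large deviation theorem, namely that $\PP(S_m>x)/\big(m c_Y (x-m\mu)^{-\alpha}\big)\to 1$ uniformly over $m$ and $x\ge cm\vee x_*$; it then truncates $B_N$ at the $x$-dependent level $\log x$ and kills the excess with a Chernoff bound $\PP(B_N\ge \log x)\le \exp(-\widetilde{I}(\log x)\log x)$. You instead exploit only subexponentiality: the exact identity obtained from $\EE[B_N]=Np_N$, the defining relation $\PP(S_k>x)/\PP(Y>x)\to k$ for the finitely many head terms $k\le K$, and Kesten's uniform bound $\PP(S_k>x)\le C(\delta)(1+\delta)^k\PP(Y>x)$ together with the exponential moment $\EE[s^{B_N}]=(1+(s-1)p_N)^N$ for the tail, with a cutoff $K$ depending only on $\varepsilon$ rather than on $x$. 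Both arguments are complete and deliver the required uniformity in $(N,x)$; yours trades the Nagaev input (which the paper reuses anyway in Lemma~\ref{lemma:SNbounds}, so it is ``free'' there) for Kesten's lemma, and in exchange is more general --- as you note, it needs neither $\alpha\in(1,2)$ nor even the power-law form of the tail, only subexponentiality of $Y$, whereas the paper's use of the asymptotics $m c_Y(x-m\mu)^{-\alpha}$ relies on $\EE[Y]<\infty$ and strict regular variation. One small point worth spelling out if you write this up: when you absorb the ``$-k$'' into the Kesten bound for $k>K$ you should fix $\delta'>\delta$ and a constant $C'$ with $C(\delta)(1+\delta)^k+k\le C'(1+\delta')^k$ for all $k$, which is immediate but should be stated.
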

\begin{proof}
  Write $\mu := \EE[Y_1]$ ($<\infty$ because $\alpha>1$),
  $S_m := Y_1+\cdots+Y_m$. Inspection of the proof of \cite[Thm.~1]{Nagaev82}
  gives
\begin{equation}
  \label{eq:Nagaev.quantitative}
  \forall \, c>\mu, \varepsilon > 0 \; \exists\, x_*=x_*(c, \varepsilon) \text{ such that }
  \; \sup_{m \in \NN} \sup_{x \, \ge \, cm \vee x_*}
  \Big| \frac{\PP(S_m > x)}{m c_Y (x- m \mu)^{-\alpha}} - 1 \Big| \le \varepsilon
\end{equation}
(note that \cite{Nagaev82} considers centred summands, we apply the results from
\cite{Nagaev82} to $\PP(S_m > x) = \PP( S_m - m\mu > x -m\mu )$.)

By assumption, $\overline{c}_p := \sup_{N\in\NN} N p_N < \infty$. For
$y > \overline{c}_p$ we have the elementary large deviations bound
\begin{align*}
  \PP(B_N \geq y) & \leq e^{-\lambda y} \EE\big[ e^{\lambda B_N}\big]
  \leq e^{-\lambda y} \Big( \frac{\overline{c}_p}N \big(e^\lambda -1\big) + 1 \Big)^N
  \leq \exp\Big( - \lambda y + \overline{c}_p \big(e^\lambda -1\big) \Big)
\end{align*}
which holds for all $N \in \NN, \lambda>0$. Choosing $\lambda = \log(y/\overline{c}_p) > 0$
yields
\begin{align}
  \label{eq:BinLDub}
  \PP(B_N \geq y) & \leq \exp\(- y \widetilde{I}(y)\) \quad \text{for all } N \in \NN, y > \overline{c}_p
\end{align}
where $\widetilde{I}(y) = \log(y/\overline{c}_p) - 1$.
\smallskip

Fix $\varepsilon > 0$, choose $x_*$ so that the bound in \eqref{eq:Nagaev.quantitative} holds for some $c> \mu$
and potentially enlarge $x_*$ further such that for all $x\geq x_*$ we have $x \geq c m >\mu m$ for all $m \leq \log x.$
Let $x_{**} > e^{\overline{c}_p}$ be so large that
\begin{align*}
\sup_{x \ge x_{**}} \frac{x^\alpha}{c_p c_Y} \exp\big( - \widetilde{I}(\log x) \cdot \log x \big)
< \varepsilon \\
\intertext{and}
\sup_{x \ge x_{**}} \sup_{1 \le m \le \lfloor \log x \rfloor } \Big( \frac{x}{x-m\mu }\Big)^\alpha < 1+\varepsilon .
\end{align*}

For $x \ge x_* \vee x_{**}$ we have
\begin{align*}
  \frac{\PP(X_N > x)}{c_p c_Y x^{-\alpha}} - 1 &
  = \sum_{m=0}^N \binom{N}{m} p_N^m (1-p_N)^{N-m}
  \bigg( \frac{\PP(S_m > x)}{c_p c_Y x^{-\alpha}} - 1 \bigg) \\
  & \le \sum_{m=0}^{\lfloor \log x \rfloor \wedge N} \binom{N}{m} p_N^m (1-p_N)^{N-m}
  \bigg( \frac{\PP(S_m > x)}{c_p c_Y (x-m\mu)^{-\alpha}} \frac{(x-m\mu)^{-\alpha}}{x^{-\alpha}} - 1 \bigg) \\
  & \hspace{2em}
  + \frac{x^\alpha}{c_p c_Y} \sum_{m=\lceil \log x \rceil \wedge N}^N \binom{N}{m} p_N^m (1-p_N)^{N-m} \\
  & \le \sum_{m=0}^{\lfloor \log x \rfloor \wedge N} \binom{N}{m} p_N^m (1-p_N)^{N-m}
  \Big( \frac{m}{c_p} (1+\varepsilon)^2 - 1 \Big) + \frac{x^\alpha}{c_p c_Y} \PP\big( B_N \ge \log x\big)  \\
  & \le \Big( \frac{(1+\varepsilon)^2}{c_p} \EE[B_N] - 1 \Big) +
  \frac{x^\alpha}{c_p c_Y} \exp\big( - \widetilde{I}(\log x) \cdot \log x \big).
\end{align*}
The right-hand side is smaller than $8 \varepsilon$ when $N \ge N_0$ where
$N_0$ is chosen so that
\begin{align*}
  \EE[B_N] < c_p (1+\varepsilon) \quad
  \text{for all } N \ge N_0.
\end{align*}

The lower bound can be proved analogously: it suffices to keep in the
computation above the sum over $m$ up to $\lfloor \log x \rfloor \wedge
N$ where $x \ge \max\{x_*, x_{**}, x_{\dagger} \}$ with $x_{\dagger}$ chosen so
large that $\sum_{m=0}^{\lfloor \log x_{\dagger} \rfloor \wedge N} m
\binom{N}{m} p_N^m (1-p_N)^{N-m} \ge (1-\varepsilon) \EE[B_N]$ uniformly in $N \ge N_0$, with a
suitably enlarged $N_0$.
\end{proof}
\bigskip

\begin{proof}[Proof of Lemma \ref{lemma:coinscheme}] Put $\overline{c}_{p} := \sup_{n\in\NN} N p_N < \infty$.
  $A_k^{(N)} := \sum_{i \neq k}^N A_{i,k} \sim \mathrm{Bin}(N-1,p_N)$, we have (see \eqref{eq:BinLDub})
  \begin{align*}
    \PP(A_k^{(N)} \ge \log N) \le \exp\Big( - \log N \big( \log\log(N) - \log \overline{c}_{p} - 1\big) \Big)
    = O\Big( \exp\big(-(b+1) \log N\big)\Big)
  \end{align*}
for any $b>0$. The probability on the left-hand side of
  \eqref{lemma:coinscheme.eq1} is bounded by $\sum_{k=1}^N
  \PP(A_k^{(N)} \ge \log N) = O\big( N \cdot N^{-b-1}\big)$.
  \medskip

  The probability bound in \eqref{lemma:coinscheme.eq2} is a classical large deviation bound
  for sums of Bernoulli random variables (the claimed constant is not sharp).

  {\smallskip

    (For completeness, here are some details:
    \[
    \EE[e^{\beta A^{(N)}}] = \big( p_N(e^\beta-1)+1\big)^{N(N-1)/2}
    \le \exp\Big(  p_N \frac{N(N-1)}{2} \big(e^\beta-1\big) \Big)
    = \exp\Big(\EE[A^{(N)}] \big(e^\beta-1\big) \Big)
    \]
    for any $\beta\in\RR$, hence (we parametrise $\lambda\ge 0$ in the formulas below)
    \begin{align*}
      \PP\big( A^{(N)} > (1+\varepsilon) \EE[A^{(N)}] \big) &
      \le e^{-\lambda (1+\varepsilon) \EE[A^{(N)}]} \EE[e^{\lambda A^{(N)}}]
      \le \exp\Big( \EE[A^{(N)}] \big(e^\lambda-1-\lambda \, - \, \varepsilon \lambda \big) \Big)
    \end{align*}
    and
    \begin{align*}
      \PP\big( A^{(N)} < (1-\varepsilon) \EE[A^{(N)}] \big) &
      \le e^{\lambda (1-\varepsilon) \EE[A^{(N)}]} \EE[e^{-\lambda A^{(N)}}]
      \le \exp\Big( \EE[A^{(N)}] \big(e^{-\lambda}-1+\lambda \, - \, \varepsilon \lambda \big) \Big).
    \end{align*}
    Now put $\lambda=\varepsilon$ and use that $0 \le e^\beta-1-\beta \le (2/3)\beta^2$ for $|\beta|\le 1/2$.)
  }
\end{proof}

\begin{proof}[Proof of Lemma \ref{lemma:SNbounds}]
\eqref{lemma:SNbounds.small} can be proved by classical large deviation bound arguments, e.g.\
along the lines of the proof of \cite[Lemma~5]{Schweinsberg2003}. Alternatively,
we see from \eqref{lemma:coinscheme.eq2} in Lemma~\ref{lemma:coinscheme} by conditioning
on the $A^{(N)}_{i,j}$'s in \eqref{eq:Xijrepr} that $S_N$ is indeed a sum of $\approx c_{X,1}N/2$ i.i.d.\
summands (up to an exponentially small error term) and then we can literally apply
\cite[Lemma~5]{Schweinsberg2003}.
\smallskip

For \eqref{lemma:SNbounds.large} under Assumption~\eqref{ass:Xtail} we
represent $S_N$ via \eqref{eq:Xijrepr} and
Lemma~\ref{lemma:coinscheme} as a sum of at most $(1+\varepsilon/2)
c_{X,1} N/2$ i.i.d.\ copies of $X$ (up to an exponentially small error
term) and then apply \cite[Thm.~1]{Nagaev82}.

Under Assumption~\eqref{ass:Xvariance} we have $c:= \sup_N
\EE[(X_1^{(N)})^2]<\infty$ (cf.\ e.g.\ \eqref{eq:X1fact2ndmom} below), so
$\PP(X_1^{(N)} \ge x ) \le c/x^2$ by Markov's
inequality. \cite[Thm.~2]{Nagaev82} then gives a bound of the form $C
N^{-1}$ for the probability in \eqref{lemma:SNbounds.large}.

\medskip

Finally, note that $S_N'' \mathop{=}^d S_{N-1}$ and $S_N'' \le S_N' \le S_N$.
\end{proof}

\medskip
With the help of Lemma~\ref{lemma:SNbounds} (and Lemma~\ref{lemma:randomsumtails}) we can now prove Lemma \ref{lemma:SchweinsbergcN}.
For this let us recall that the factorial moments for a hypergeometric random variable $H \sim \mathrm{hypergeom}(n,m,x)$
are given by (see e.g. Formula~(39.6) in \cite{JKB97}),
\begin{equation}
\label{facmomenthyp}
  \EE\big[ (H)_k \big] = (n)_k \frac{(x)_k}{(m)_k}.
\end{equation}

\begin{proof}[Proof of Lemma \ref{lemma:SchweinsbergcN}]
Recall that $X_1=X_1^{(N)}.$
We have by \eqref{facmomenthyp} and \eqref{lemma:SNbounds.small} of Lemma \ref{lemma:SNbounds} that
\begin{equation}
\label{c_Nsim}
c_N = \frac{\EE[(V_1)_2]}{8(N-1)}
=  \frac1{8(N-1)} N(N-1) \EE\Big[ \frac{(X_1)_2}{(S_N)_2} \ind(S_N \ge N)\Big] + O(e^{-c N}).
\end{equation}
In the case of 1. we now observe that  for every $\varepsilon > 0$
\begin{align}
\label{EX_1_1}
  \EE\left[ \frac{(X_1)_2}{(S_N)_2} \ind(S_N \ge N)\right]
  & \leq \frac{\EE[(X_1)_2]}{((1-\varepsilon) \mu N)_2} + O(e^{-c N}), \\
\label{EX_1_2}
  \EE\left[ \frac{(X_1)_2}{(S_N)_2} \ind(S_N \ge N)\right]
  & \geq \frac{\EE[(X_1)_2 \ind(S_N \leq (1+\varepsilon)\mu N)]}{((1+\varepsilon) \mu N)_2} + O(N^{1-\alpha})
\end{align}
by \eqref{lemma:SNbounds.small} and \eqref{lemma:SNbounds.large} of Lemma~\ref{lemma:SNbounds}.

\begin{align}
  \label{eq:X1fact2ndmom}
  \EE[(X_1)_2] = \EE[X_1^2] - \EE[X_1] & =
  (N-1) \EE[X_{1,2}^2] + (N-1)(N-2) \EE[X_{1,2}]^2 - (N-1) \EE[X_{1,2}] \notag \\
  & \sim c_{X,1} \big( \EE[X^2] - \EE[X]\big) + \big(c_{X,1} \EE[X]\big)^2.
\end{align}
Furthermore,
\begin{align*}
  \EE\big[(X_1)_2 \ind(S_N > (1+\varepsilon)\mu N)\big]
  & \le \EE\big[(X_1)_2 \ind(X_1 > \varepsilon\mu N/2)\big] \\
  & \hspace{1.5em} + \EE\big[(X_1)_2 \ind(S_N-X_1 > (1+\varepsilon/2)\mu N)\big] \mathop{\longrightarrow}_{N\to\infty} 0
\end{align*}
due to the fact that by \eqref{eq:X1fact2ndmom} and \eqref{lemma:SNbounds.large} of Lemma~\ref{lemma:SNbounds}
\begin{equation*}
\EE\big[(X_1)_2 \ind(S_N-X_1 > (1+\varepsilon/2)\mu N)\big]
\leq \EE\big[(X_1)_2\big] \cdot \PP\big[ S_N -X_1 > (1+\varepsilon/2)\mu N)\big] \leq C N^{1-\alpha}
\end{equation*}
with $\alpha=2$ and so as $N \rightarrow \infty,$
\begin{equation*}
\EE[(X_1)_2 \ind(S_N \leq (1+\varepsilon)\mu N)] \sim  \EE[(X_1)_2]. 
\end{equation*}
\smallskip
Combining this and \eqref{EX_1_1}, \eqref{EX_1_2}, and \eqref{eq:X1fact2ndmom}  with
$\mu=c_{X,1} \mu_X/2$ gives \eqref{eq:SchweinsbergcN.var}.
\medskip

\noindent
For showing 2.\  we first use \eqref{c_Nsim}, write $S_N = X_1 + (S_N-X_1)$ and use that by Lemma~\ref{lemma:SNbounds} for
$\varepsilon>0,$
\begin{equation*}
\PP\big[  (1-\varepsilon)\mu N \leq S_N-X_1 \leq  (1+\varepsilon)\mu N \big] \rightarrow 1.
\end{equation*}
We may then formally follow the argument in the proof of \cite[Lemma~13]{Schweinsberg2003} in order to obtain
\begin{align*}
c_N 
\sim \frac{N}{8} \EE\Big[ \frac{X_1^2}{(X_1+ N \mu)^2}\Big].
\end{align*}
Now note that $\PP(X_1 > x)=\PP(X^{(N)}_1 > x) \sim c_{X,1} c_{X,2} x^{-\alpha}$ as
$x\to\infty$ uniformly in $N$ by Lemma~\ref{lemma:randomsumtails} and so the statement of
Lemma~\ref{Exrff:lemWdbWpMmom}  also holds 
in the current setting implying that
\begin{align*}
c_N \sim \frac{N}{8} (N\mu)^{-\alpha} c_{X,1} c_{X,2} \alpha B(2-\alpha,\alpha).
\end{align*}
\end{proof}

\begin{appendix}
\section{The weak convergence criterion for the total offspring numbers}
\label{app:weakconvcrit}

Here, for ease of reference, we briefly recall some of the main results from M\"ohle and Sagitov\ \cite{Mohle2001}, Sagitov \cite{Sagitov2003}, Schweinsberg \cite{Schweinsberg2000}
in the notation of our Theorem~\ref{thm:res} and its assumptions. Note that our offspring numbers $\(V_1,\ldots,V_N\)$ by  (\ref{def:Vi})
are analogous to the family size in each generation for haploid Cannings models. The only difference comes from the fact that the $V_i$  here sum to $2N$ whereas theirs sum to $N$.

\begin{lemma}\label{1404148}
Assume (\ref{eq:Vfmcond}). For any $j\in\mathbb{N}$, there exists
a symmetric measure $F_j$, uniquely determined on the simplex
$\Delta_j:=\left\{\(x_1,\ldots, x_j\): x_1\geq x_2\geq\cdots\geq x_j \geq 0, \sum_{i=1}^{j}x_i\leq 1\right\}$
via its moments
\begin{equation*}
\int_{\Delta_j}x_1^{k_1-2}\cdots x_j^{k_j-2}F_j\(dx_1,\ldots,dx_j\)=\phi_j\(k_1,\ldots,k_j\)\,\,\text{for\,\,}k_1, \ldots, k_j\geq 2.
\end{equation*}
\end{lemma}
\begin{proof}
We refer to the analog proof for Lemma 3.1 in \cite{Mohle2001}.
\end{proof}
\begin{lemma}
The recursive formula for $\psi_{j,s}\(k_1,\ldots,k_j\)$ by (\ref{eq:Vfmcond2}) over $s$ holds as follows:
\begin{equation}\label{recursive}
\begin{split}
\psi_{j,s+1}\(k_1,\ldots,k_j\)=&\psi_{j,s}\(k_1,\ldots,k_j\)-\sum_{i=1}^j\psi_{j,s}\(k_1,\ldots,k_{i-1},k_i+1,k_{i+1},\ldots,k_j\)\\
&-s\psi_{j+1,s-1}\(k_1,\ldots,k_j,2\).
\end{split}
\end{equation}
\end{lemma}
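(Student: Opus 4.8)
The plan is to derive an \emph{exact} finite-$N$ recursion for the unnormalised mixed moments and then pass to the limit, using that all the limits $\psi_{j,s}$ exist by \cite[Lemma~3.5]{Mohle2001}. Abbreviate $K:=k_1+\cdots+k_j$ and
\[
M_N(j,s;k_1,\dots,k_j):=\EE\big[(V_1)_{k_1}\cdots(V_j)_{k_j}\,V_{j+1}\cdots V_{j+s}\big],
\]
so that, by \eqref{eq:Vfmcond2}, $\psi_{j,s}(k_1,\dots,k_j)=\lim_{N\to\infty}M_N(j,s;k_1,\dots,k_j)/\big(c_N N^{K-j}2^{K+s}\big)$. (For $s=0$ the last term in \eqref{recursive} is understood to be absent, consistent with $\psi_{j,0}=\phi_j$.)

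First I would exploit $\sum_{i=1}^N V_i=2N$ to replace the extra factor $V_{j+s+1}$ appearing in $M_N(j,s+1;k_1,\dots,k_j)$ by $2N-\sum_{i\neq j+s+1}V_i$, and split the resulting sum into the three ranges $i\in\{1,\dots,j\}$, $i\in\{j+1,\dots,j+s\}$ and $i\in\{j+s+2,\dots,N\}$. For the first range I use $(V_i)_{k_i}\,V_i=(V_i)_{k_i+1}+k_i(V_i)_{k_i}$; for the second, $V_i^2=(V_i)_2+V_i$; for the third, joint exchangeability of $(V_1,\dots,V_N)$ (cf.\ \eqref{eq:Vexch}) shows via the transposition $(i\ j{+}s{+}1)$ that each of the $N-j-s-1$ terms equals $M_N(j,s+1;k_1,\dots,k_j)$ again. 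Collecting the $N-j-s-1$ copies of $M_N(j,s+1;\cdot)$ with the one on the left, and noting that the linear contributions produce exactly $K+s$ further copies of $M_N(j,s;\cdot)$, one obtains
\begin{align*}
(N-j-s)\,M_N(j,s+1;k_1,\dots,k_j) &= (2N-K-s)\,M_N(j,s;k_1,\dots,k_j)\\
&\quad -\sum_{i=1}^{j} M_N(j,s;k_1,\dots,k_i+1,\dots,k_j)-s\,M_N(j+1,s-1;k_1,\dots,k_j,2).
\end{align*}

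Finally I would divide this identity by $c_N N^{K-j+1}2^{K+s+1}$ and let $N\to\infty$. The point is that raising a block $(V_i)_{k_i}$ to $(V_i)_{k_i+1}$ changes the normalising constant of \eqref{eq:Vfmcond2} from $c_N N^{K-j}2^{K+s}$ to $c_N N^{K+1-j}2^{K+1+s}$, i.e.\ by a factor $2N$, and turning a linear factor $V_i$ into a new block $(V_i)_2$ changes it from $c_N N^{K-j}2^{K+s}$ to $c_N N^{(K+2)-(j+1)}2^{(K+2)+(s-1)}=c_N N^{K+1-j}2^{K+s+1}$, again a factor $2N$; hence each ratio on the right converges precisely to the corresponding $\psi$, while the prefactors $(N-j-s)/N$ and $(2N-K-s)/(2N)$ tend to $1$, and \eqref{recursive} drops out. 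This is the diploid counterpart of the recursion in \cite{Mohle2001}, the factors of $2$ being a bookkeeping consequence of $\sum_i V_i=2N$ rather than $=N$. The computation is routine; the only point that really demands care — and the only place where an error could slip in — is the consistent tracking of the powers of $N$ and $2$ across the three types of terms, so I would carry out that bookkeeping explicitly.
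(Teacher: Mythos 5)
Your derivation is correct: the exact finite-$N$ identity checks out (the $K+s$ linear contributions, the $N-j-s-1$ exchangeable copies, and the bookkeeping of the normalising factors $N^{K-j}2^{K+s}$ all balance as you claim), and it is essentially the argument the paper intends — the paper's proof simply refers to the analogous Lemma~3.3 of M\"ohle and Sagitov \cite{Mohle2001}, whose proof proceeds by exactly this substitution of the fixed total offspring sum and splitting by index range, here adapted to $\sum_i V_i = 2N$.
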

\begin{proof}
We refer to the analog proof for Lemma 3.3 in \cite{Mohle2001}.
\end{proof}

\begin{proposition} Assume (\ref{eq:Vfmcond}).
\begin{enumerate}[1.]
\item For any $j\geq {\ell}$, $k_1\geq m_1$,$\ldots$, $k_{\ell}\geq m_{\ell}$, we have  $\phi_j\(k_1,\ldots,k_j\)\leq\phi_{\ell}\(m_1,\ldots,m_{\ell}\).$
\item For all $j\in\mathbb{N}$ and $k_1, \ldots, k_j\geq 2$, $\phi_j\(k_1,\ldots,k_j\)$ are uniformly bounded.
\end{enumerate}
\end{proposition}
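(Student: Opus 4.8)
The plan is to deduce the monotonicity statement~1.\ from two completely elementary deterministic estimates that hold for every finite $N$, and then to read off the uniform bound~2.\ as a corollary; this parallels the corresponding classification lemma in \cite{Mohle2001}, the only bookkeeping differences being that $\sum_{i=1}^N V_i = 2N$ (rather than $N$) in our diploid set-up, and that the normalisation in \eqref{eq:Vfmcond} carries the extra factor $2^{k_1+\cdots+k_j}$, which will cancel exactly against the matching powers of $2N$ produced by the estimates.

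First I would carry out the reduction from $j$ to $j-1$ factors, keeping the exponents $k_1,\dots,k_j\ge 2$ fixed. By the exchangeability of $(V_1,\dots,V_N)$ recorded in \eqref{eq:Vexch},
\[
\EE\big[(V_1)_{k_1}\cdots(V_j)_{k_j}\big]
=\frac1{N-j+1}\,\EE\Big[(V_1)_{k_1}\cdots(V_{j-1})_{k_{j-1}}\sum_{i=j}^N (V_i)_{k_j}\Big].
\]
Since $k_j\ge 2$ and every factor of a falling factorial beyond the first two is at most $V_i\le 2N$, one has $\sum_{i=j}^N (V_i)_{k_j}\le (2N)^{k_j-2}\sum_{i=1}^N V_i(V_i-1)\le (2N)^{k_j-2}\big(\sum_{i=1}^N V_i\big)^2=4N^2(2N)^{k_j-2}$. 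Substituting this bound, dividing by $N^{k_1+\cdots+k_j-j}2^{k_1+\cdots+k_j}$ and collecting the powers of $N$ and of $2$, the prefactor multiplying $\EE[(V_1)_{k_1}\cdots(V_{j-1})_{k_{j-1}}]\big/\big(N^{k_1+\cdots+k_{j-1}-(j-1)}2^{k_1+\cdots+k_{j-1}}\big)$ is exactly $N/(N-j+1)\to1$. As both renormalised moments converge by hypothesis \eqref{eq:Vfmcond}, letting $N\to\infty$ gives $\phi_j(k_1,\dots,k_j)\le\phi_{j-1}(k_1,\dots,k_{j-1})$, and iterating this yields $\phi_j(k_1,\dots,k_j)\le\phi_\ell(k_1,\dots,k_\ell)$ for all $j\ge\ell$.

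Next I would lower the exponents: for $k_i\ge m_i\ge 2$ the elementary bound $(V_i)_{k_i}\le(2N)^{k_i-m_i}(V_i)_{m_i}$ gives $\EE[(V_1)_{k_1}\cdots(V_\ell)_{k_\ell}]\le(2N)^{\sum_{i=1}^\ell(k_i-m_i)}\,\EE[(V_1)_{m_1}\cdots(V_\ell)_{m_\ell}]$, and after dividing by the respective normalisations the prefactor $(2N)^{\sum(k_i-m_i)}\big/\big(N^{\sum(k_i-m_i)}2^{\sum(k_i-m_i)}\big)$ equals $1$ for every $N$; hence $\phi_\ell(k_1,\dots,k_\ell)\le\phi_\ell(m_1,\dots,m_\ell)$. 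Combining the two reductions proves~1. For~2.\ I would apply~1.\ with $\ell=1$, $m_1=2$ to obtain $0\le\phi_j(k_1,\dots,k_j)\le\phi_1(2)$ for all $j$ and all $k_1,\dots,k_j\ge2$, and note that $\phi_1(2)=2$: indeed $c_N=\EE[(V_1)_2]/(8(N-1))$ by Lemma~\ref{lem:cN}, so $\EE[(V_1)_2]\big/\big(c_N N\,2^{2}\big)=2(N-1)/N\to2$. Every step reduces to a one-line identity or a crude bound, so there is no genuine obstacle; the only points requiring a little care are (i) arranging the exponents of $N$ and of $2$ so that the prefactors really do tend to $1$ — equivalently, keeping in mind throughout that $\sum_i V_i=2N$ — and (ii) invoking the assumed existence of all limits in \eqref{eq:Vfmcond} at the right moment, which is what lets the finite-$N$ inequalities survive the passage to the limit. (Alternatively, once Lemma~\ref{1404148} is available, part~2 also follows from $\phi_j(k_1,\dots,k_j)=\int_{\Delta_j}x_1^{k_1-2}\cdots x_j^{k_j-2}\,F_j(dx)\le F_j(\Delta_j)$ together with $F_1(\Delta_1)=\phi_1(2)=2$, but the direct route above is self-contained.)
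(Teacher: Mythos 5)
Your proof is correct, but it follows a genuinely different route from the paper's for part~1. The paper deduces the monotonicity from the recursion \eqref{recursive} for the quantities $\psi_{j,s}$ (of which the $\phi_j=\psi_{j,0}$ are the $s=0$ case): since all $\psi_{j,s}$ are nonnegative limits, rearranging \eqref{recursive} for $s=0$ gives monotonicity in each exponent $k_i$, and for $s=1$ it gives $\phi_{j+1}(k_1,\dots,k_j,2)\le\psi_{j,1}(k_1,\dots,k_j)\le\phi_j(k_1,\dots,k_j)$, i.e.\ monotonicity in $j$; the paper then cites M\"ohle's remark and writes down the resulting finite-$N$ chain of inequalities for the uniform bound without detail. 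You instead prove everything by two deterministic finite-$N$ estimates — the exchangeability identity replacing $(V_j)_{k_j}$ by $\frac{1}{N-j+1}\sum_{i\ge j}(V_i)_{k_j}$ together with $\sum_i V_i(V_i-1)\le(2N)^2$ for dropping a factor, and $(V_i)_{k_i}\le(2N)^{k_i-m_i}(V_i)_{m_i}$ for lowering exponents — and I checked that the powers of $N$ and $2$ do cancel as you claim, leaving prefactors $N/(N-j+1)\to1$ and $1$ respectively; part~2 then follows exactly as in the paper from $\phi_j\le\phi_1(2)=2$ via $c_N=\EE[(V_1)_2]/(8(N-1))$. Your argument is self-contained and avoids invoking the $\psi_{j,s}$ machinery and the recursion altogether, at the cost of being specific to the $s=0$ quantities $\phi_j$ (the paper's route yields the analogous monotonicity for all $\psi_{j,s}$ simultaneously, which is what it actually needs elsewhere); in effect you have supplied the elementary justification for the displayed finite-$N$ inequality that the paper states without proof.
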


\begin{proof}
Note that these $\phi_j$'s are exactly the particular case of $\psi_{j,s}$'s when $s=0$. By recursive formula (\ref{recursive}), the monotonicity is
true for $\phi_j$'s. This result has also been proposed in the Remark on Page 39 of M$\ddot{\text{o}}$hle \cite{Mohle06}.
The uniform bounded property follows from monotonicity as
\begin{equation*}
\begin{split}
\frac{\mathbb{E}\(\(V_1\)_{k_1}\cdots\(V_j\)_{k_j}\)}{c_NN^{k_1+\cdots+k_j-j}2^{k_1+\cdots+k_j}}\leq\frac{\mathbb{E}\(\(V_1\)_2\)}{4c_NN}\leq 2,
\end{split}
\end{equation*}
where we have used $c_N=\mathbb{E}\(\(V_1\)_2\)/\left[8\(N-1\)\right]$.
\end{proof}



In order to represent those $\{\psi_{j,s}\,:j\in\mathbb{N},s\in\mathbb{N}_0\}$ by integrations with respect to symmetric measures
$\{F_j: j\in\mathbb{N}\}$, we define analogous polynomials
$$T_{j,s}^{\(r\)}\(x_1,\ldots,x_r\),~~~~1\leq j\leq r, r\in\mathbb{N}\text{~and~}s\in\mathbb{N}_0$$
as
\begin{equation*}
T_{r,s}^{\(r\)}\(x_1,\ldots,x_r\)=\(1-x_1-\cdots-x_r\)^s
\end{equation*}
 and
\begin{equation*}
\begin{split}
&T_{r-j,s}^{\(r\)}\(x_1,\ldots,x_r\)
=\(-1\)^{j+1}\sum_{i_j=2j-1}^{i_{j+1}-2}\cdots\sum_{i_1=1}^{i_2-2}\prod_{k=0}^ji_k\(1-\sum_{i=1}^{r-k}x_i\)^{i_{k+1}-i_k-2},\,\,\,\,j=1,\ldots,r,
\end{split}
\end{equation*}
where $i_0=-1$ and $i_{j+1}=s+1$. Note that this implies $T^{\(r\)}_{r-j,s}\(x_1,\ldots,x_r\)=0$ for $s<2j$.

\begin{lemma}
Assume (\ref{eq:Vfmcond}). Then we have
\begin{equation}\label{eq:psi}
\psi_{j,s}\(k_1,\ldots,k_j\)=\sum_{r\geq j}\int_{\Delta_r}x_1^{k_1-2}\cdots x_j^{k_j-2}T_{j,s}^{\(r\)}\(x_1,\ldots,x_r\)F_r\(dx_1\cdots dx_r\)
\end{equation}
for all $j\in\mathbb{N}$, $k_1,\ldots,k_j\geq 2$ and all $s\in\mathbb{N}_0$.
\end{lemma}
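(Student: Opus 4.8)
The plan is to prove \eqref{eq:psi} by induction on $s\in\mathbb{N}_0$, using the recursion \eqref{recursive} for the $\psi_{j,s}$ together with the integral representation of $\phi_j=\psi_{j,0}$ against $F_j$ from Lemma~\ref{1404148}. The argument runs parallel to the proof of Lemma~3.4 in \cite{Mohle2001}; the only structural difference in our diploid setting is that $V_1+\cdots+V_N=2N$ rather than $N$, but this has already been absorbed into the normalisations of $\phi_j$, $\psi_{j,s}$ and $F_j$ and is irrelevant to the combinatorics that follows.

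First I would dispose of the base case $s=0$. From the explicit definitions one has $T_{r,0}^{(r)}=(1-x_1-\cdots-x_r)^0=1$, while $T_{r-j',0}^{(r)}\equiv 0$ for every $1\le j'\le r$, because $T_{r-j',s}^{(r)}=0$ whenever $s<2j'$ and here $s=0<2j'$. Hence only the term $r=j$ survives on the right-hand side of \eqref{eq:psi}, which then reduces to $\int_{\Delta_j}x_1^{k_1-2}\cdots x_j^{k_j-2}F_j(dx)=\phi_j(k_1,\ldots,k_j)=\psi_{j,0}(k_1,\ldots,k_j)$. I would also record here that for each fixed $s$ the sum over $r$ in \eqref{eq:psi} is genuinely finite, since $T_{j,s}^{(r)}\equiv 0$ as soon as $r-j>s/2$, so that no convergence issue arises.

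For the inductive step, assume \eqref{eq:psi} for all levels $\le s$ and all $j\in\mathbb{N}$, $k_1,\ldots,k_j\ge 2$, and substitute these representations into the three terms on the right-hand side of \eqref{recursive}. Writing $M:=x_1^{k_1-2}\cdots x_j^{k_j-2}$, the term $\psi_{j,s}(k_1,\ldots,k_j)$ contributes $\sum_{r\ge j}\int M\,T_{j,s}^{(r)}\,F_r$; each $\psi_{j,s}(k_1,\ldots,k_i+1,\ldots,k_j)$ contributes the same integral with an extra factor $x_i$ in the integrand; and $\psi_{j+1,s-1}(k_1,\ldots,k_j,2)$ contributes $\sum_{r\ge j+1}\int M\,T_{j+1,s-1}^{(r)}\,F_r$ (the final argument being $2$, so $x_{j+1}^{2-2}=1$; when $s=0$ this term is absent anyway, being multiplied by $s$). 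Collecting the integrands for each fixed $r$, the identity \eqref{eq:psi} for $s+1$ follows once one verifies the polynomial identities
\[
T_{j,s+1}^{(j)}=\Big(1-\sum_{i=1}^{j}x_i\Big)T_{j,s}^{(j)},\qquad
T_{j,s+1}^{(r)}=\Big(1-\sum_{i=1}^{j}x_i\Big)T_{j,s}^{(r)}-s\,T_{j+1,s-1}^{(r)}\quad(r\ge j+1).
\]
The first is immediate from $T_{r,s}^{(r)}=(1-x_1-\cdots-x_r)^s$.

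The second identity is the combinatorial heart of the lemma, and verifying it is the step I expect to be the main obstacle. It has to be checked directly from the nested-sum definition of $T_{r-j',s}^{(r)}$: one splits off a summation index, re-indexes the telescoping powers $(1-\sum_{i=1}^{r-k}x_i)^{i_{k+1}-i_k-2}$, adjusts the uppermost summation bound (which equals $s+1$ for $T_{j,s}^{(r)}$, $s+2$ for $T_{j,s+1}^{(r)}$, and $s$ for $T_{j+1,s-1}^{(r)}$), and keeps careful track of the sign $(-1)^{j'+1}$ and of the degenerate ranges in which terms vanish because $s<2j'$. The explicit factor $s$ and the shift $j\mapsto j+1$ in the second summand on the right correspond exactly to singling out which of the $s$ non-participating indices is resolved first. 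Since this is a purely algebraic identity, coinciding with the one underlying \cite[Lemma~3.4]{Mohle2001}, one may alternatively simply invoke it from there. Once the two identities are in hand, integrating against $F_r$ and summing over $r$ completes the induction, and hence the proof.
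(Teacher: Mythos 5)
Your proof is correct and follows essentially the same route as the paper, which simply refers to the analogous Lemmas~3.5 and 3.14 of M\"ohle and Sagitov \cite{Mohle2001}: induction on $s$ via the recursion \eqref{recursive}, with the base case supplied by Lemma~\ref{1404148} (only the $r=j$ term survives since $T^{(r)}_{j,0}\equiv 0$ for $r>j$) and the inductive step reduced to the polynomial identity $T_{j,s+1}^{(r)}=\big(1-\sum_{i=1}^{j}x_i\big)T_{j,s}^{(r)}-s\,T_{j+1,s-1}^{(r)}$ for $r\ge j+1$. Your identification of that identity as the only nontrivial algebraic ingredient, with the option of importing it from the cited reference, matches the paper's level of detail.
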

\begin{proof}
The proof is analogous to Lemma 3.5 and Lemma 3.14 in \cite{Mohle2001}.
\end{proof}

\begin{theorem}\label{th:sibling}
Assume (\ref{eq:Vfmcond}). $\left\{\psi_{j,s}\(k_1,k_2,\ldots,k_j\):s\in\mathbb{N}_0, j\in\mathbb{N}, k_1,\ldots,k_j\geq 2\right\}$ is the collection of real numbers given by (\ref{eq:psi}). It is clearly that these $\psi_{j,s}\(k_1,k_2,\ldots,k_j\)$ are nonnegative. The sequence of measures $\(F_j\)_{j\in\mathbb{N}}$ in Lemma \ref{1404148} satisfies
\begin{enumerate}[1.]
\item each $F_j$ is concentrated on $\Delta_j=\left\{\(x_1,\ldots,x_j\):x_i\geq 0\text{~for all }i\text{ and }\sum_{i=1}^{j}x_i\leq 1\right\}$;
\item each $F_j$ is symmetric with respect to the $j$ coordinates of $\Delta_j$;
\item $F_1\(\Delta_1\)\geq F_2\(\Delta_2\)\geq\cdots\cdots$
\end{enumerate}
Then there exists an $\mathcal{E}_{\infty}$-valued coalescent process ${R}_{\infty}:=\({R}_{\infty}\(t\)\)_{t\geq 0}$ satisfying
\begin{enumerate}[1.]
\item ${R}_{\infty}\(0\)$ is the partition of $\mathbb{N}$ into singletons;
\item for each $n$, $\({R}_n\(t\)\)_{t\geq 0}$ is the restriction of $\({R}_{\infty}\(t\)\)_{t\geq 0}$ on $\{1,2,\ldots,n\}$. When ${R}_n$ has $b:=k_1+\cdots+k_j+s$ blocks, $k_1,\ldots,k_j\geq 2$, each $\(b;k_1,\ldots,k_j;s\)$-collision is occurring at rate $\psi_{j,s}\(k_1,\ldots,k_j\)$ given by (\ref{eq:psi}).
For such a simultaneous multiple coalescent process ${R}_{\infty}$, there is a finite measure $\Xi^{'}$ on the infinite simplex $\Delta$ of the form $\Xi^{'}=\Xi_0^{'}+a\delta_0$, where $\Xi_0^{'}$ has no atom at zero and $\delta_0$ is a unit mass at zero, such that
{\rm \begin{equation*}
\begin{split}
&\psi_{j,s}\(k_1,k_2,\ldots,k_j\)
=\int_{\Delta}\sum_{{\ell}=0}^{s}{{s}\choose {\ell}}\sum_{\begin{smallmatrix}i_1,\ldots,i_{j+{\ell}}\\\text{distinct}\end{smallmatrix}}x_{i_1}^{k_1}\cdots x_{i_j}^{k_j}
x_{i_{j+1}}\cdots x_{i_{j+{\ell}}}\(1-{|x|}\)^{s-{\ell}}{\frac{2\Xi^{'}_{0}\(dx\)}{\(x,x\)}+2a\ind_{\{j=1,k_1=2\}}}.
\end{split}
\end{equation*}}
\end{enumerate}
The connection between the probability measure $\Xi^{'}$ on the infinite dimensional simplex $\Delta$ and the sequence of measures $\(F_j\)_{j\in\mathbb{N}}$ is given by
{\rm \begin{equation*}
F_j\(S\)=\int_{\Delta}\sum_{\begin{smallmatrix}i_1,\ldots,i_j\\\text{distinct}\end{smallmatrix}}y_{i_1}^2\cdots y_{i_j}^2{{\ind } }_{\left\{\(y_{i_1,\ldots,y_{i_r}}\)\in S\right\}}\frac{2\Xi_{0}^{'}\(dy\)}{\(y,y\)}+2a{\ind}_{\left\{r=1,\(0,\ldots,0\)\in S\right\}}
\end{equation*}}
with $S\subseteq \Delta_r$.
\end{theorem}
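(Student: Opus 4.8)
The plan is to deduce the statement from the classification of exchangeable coalescents with simultaneous multiple collisions due to M\"ohle and Sagitov \cite{Mohle2001} and Schweinsberg \cite{Schweinsberg2000}, keeping careful track of the extra factor $2$ that our normalisation introduces relative to the haploid Cannings setting (recall $V_1+\cdots+V_N=2N$ and $\phi_1(2)=2$, cf.\ Remark~\ref{Remark3}). First I would dispose of the elementary points. Nonnegativity of the $\psi_{j,s}(k_1,\ldots,k_j)$ is immediate: under \eqref{eq:Vfmcond} these numbers coincide with the limits in \eqref{eq:Vfmcond2}, which are quotients of a nonnegative expectation by a positive constant. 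That each $F_j$ is concentrated on $\Delta_j$ and symmetric is already part of Lemma~\ref{1404148}. For the chain $F_1(\Delta_1)\ge F_2(\Delta_2)\ge\cdots$ I would set all exponents equal to $2$ in the moment identity of Lemma~\ref{1404148}, which gives $F_j(\Delta_j)=\phi_j(2,\ldots,2)$, and then invoke the monotonicity of the $\phi$'s established in the Proposition above.

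Next I would construct $R_\infty$. The key observation is that the recursion \eqref{recursive} for $(\psi_{j,s})$ is exactly the consistency relation which guarantees that, for every $n$, the $\psi_{j,s}(k_1,\ldots,k_j)$ are admissible $(b;k_1,\ldots,k_j;s)$-collision rates for a continuous-time Markov chain $R_n$ on $\mathcal{E}_n$ started from the singleton partition, and that the restriction of $R_{n+1}$ to $\{1,\ldots,n\}$ has the law of $R_n$. Kolmogorov's extension theorem then yields the projective limit $R_\infty$ on $\mathcal{E}_\infty$ with the stated properties, exactly as in \cite[Section~3]{Mohle2001} (see also \cite{Schweinsberg2000}).

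It remains to produce the measure $\Xi'$ and the connection formula. Since $R_\infty$ is an exchangeable coalescent with simultaneous multiple collisions, Schweinsberg's representation theorem \cite[Theorem~2]{Schweinsberg2000} furnishes a finite measure on $\Delta$, which I would write as $\Xi'=\Xi'_0+a\delta_{\mathbf{0}}$ with $\Xi'_0(\{\mathbf{0}\})=0$, in terms of which the rates $\psi_{j,s}$ take the displayed form; the constant multiplying $\Xi'_0/(x,x)$ and $a\delta_{\mathbf{0}}$ is pinned down to $2$ by matching the Kingman term against $\psi_{1,0}(2)=\phi_1(2)=2$. Finally, to obtain the formula for $F_j(S)$ I would set $s=0$ in that representation, compare with $\int_{\Delta_j}x_1^{k_1-2}\cdots x_j^{k_j-2}F_j(dx)=\phi_j(k_1,\ldots,k_j)=\psi_{j,0}(k_1,\ldots,k_j)$, and use that a finite measure on the compact set $\Delta_j$ is determined by its moments to read off $F_j$ as the asserted pushforward of $2\Xi'_0/(x,x)$ together with the boundary term $2a\,\ind_{\{r=1,\,(0,\ldots,0)\in S\}}$.

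I expect the only genuine difficulty to be bookkeeping rather than conceptual: one must check that the powers of $2$ coming from the population of size $2N$ and from the gene-level sampling propagate consistently through the moment identities, so that precisely the factor $2$ appears in front of $\Xi'_0/(x,x)$ and of $a\delta_{\mathbf{0}}$; with that in hand, the remaining steps are a transcription of the arguments in \cite{Mohle2001} and \cite{Schweinsberg2000}.
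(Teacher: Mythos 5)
Your proposal is correct and follows essentially the same route as the paper, whose proof simply defers to Theorem~2 and Propositions~8 and 11 of Schweinsberg \cite{Schweinsberg2000} and Theorem~2.1 of M\"ohle and Sagitov \cite{Mohle2001}; you merely spell out how those classification results are invoked and how the normalisation $\phi_1(2)=2$ (equivalently $V_1+\cdots+V_N=2N$) forces the factor $2$ in front of $\Xi'_0/(x,x)$ and $a\delta_{\mathbf 0}$. The only cosmetic imprecision is that $\psi_{1,0}(2)=2\Xi'(\Delta)$ pins down the total mass rather than just the Kingman atom, but this is exactly the normalisation argument needed.
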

\begin{proof}
  We refer to Theorem 2, Proposition 8 and Proposition 11 in Schweinsberg \cite{Schweinsberg2000} as well as Theorem 2.1 in M\"ohle and Sagitov \cite{{Mohle2001}} for 
  details. 
\end{proof}

{\noindent \it Condition I:
 The limits of
\begin{equation*}
\begin{split}
\lim_{N\rightarrow\infty}\frac{\mathbb{E}\(\(V_1-2\)^{k_1}\cdots\(V_j-2\)^{k_j}\)}{c_NN^{k_1+\cdots+k_j-j}2^{k_1+\cdots+k_j}}
\end{split}
\end{equation*}
exist for all $j\in\mathbb{N}$ and $k_1, \ldots, k_j\geq 2$, which is also equal to $\phi_j\(k_1,\ldots,k_j\)$}.
\smallskip

{\noindent \it Condition II:
The limits of
\begin{equation*}
\lim_{N\rightarrow\infty}\frac{\mathbb{E}\(\(V_1\)_2\cdots\(V_j\)_2\)}{c_NN^{j}2^{2j}}=F_j\(\Delta_j\)
\end{equation*}
exist for all $j\in\mathbb{N}$ and
\begin{equation*}
\lim_{N\rightarrow\infty}\frac{N^{j}}{c_N}\mathbb{P}\(V_1>2Nx_1,\ldots,V_j>2Nx_j\)=\int_{x_1}^1\cdots\int_{x_j}^1\frac{F_j\(dy_1\cdots dy_j\)}{y_1^2\cdots y_j^2}
\end{equation*}
holding for all points $\(x_1,\ldots,x_j\)$ of continuity for the measure $F_j$.}
\smallskip

For any $\epsilon>0$, let  $\Delta_{r,\epsilon}:=\Delta_r\cap\left\{x_1,\ldots,x_r>\epsilon\right\}$. Denote by $\Xi_{r,N}$ the symmetric measure on $\Delta_r$ giving the joint distribution of the vector $\({V_1}/{2N},\ldots,{V_r}/{2N}\)$ of unranked offspring frequencies. Let $\Xi_{r}$ be a symmetric measure on the $r$-dimensional simplex $\Delta_r$.
\smallskip

{\noindent \it Condition III: The weak convergence condition
\begin{equation*}
\frac{1}{2c_N}N^r\Xi_{r,N}\rightarrow\Xi_r\text{~~as~}N\rightarrow\infty
\end{equation*}
over $\Delta_{r,\epsilon}$, where the connection between $\Xi_r$ and $\Xi^{'}$ is given by
{\rm\begin{equation*}
\Xi_r\(S\):=\int_{\Delta}\sum_{\begin{smallmatrix}i_1,\ldots,i_r\\\text{distinct}\end{smallmatrix}}{\ind}_{\{\(y_{i_1},\ldots,y_{i_r}\)\in S\}}\frac{\Xi^{'}\(dy\)}{\(y,y\)}
\end{equation*}}
for all $S\subseteq\Delta_r$.}

It is clear that the probability measure $\Xi^{'}$ is uniquely determined by the sequence of measures $\left(\Xi_r\right)_{r\in\mathbb{N}}$.

\begin{lemma}{\label{lem:equiv}}
Conditions (\ref{eq:PhiNconv}), (\ref{eq:Vfmcond}), I, II and III are equivalent.
\end{lemma}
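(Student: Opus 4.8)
The plan is to reduce the statement to the corresponding equivalences in the haploid theory of \cite{Mohle2001, Sagitov2003, Schweinsberg2000}, exploiting the observation already emphasised in Remark~\ref{Remark3} that $(V_1,\dots,V_N)$, padded with zeros, is the family-size vector of a haploid Cannings model with population size $2N$. Since $(V_i)$ is exchangeable with $\sum_{i=1}^N V_i = 2N$ we have $\EE[V_1]=2$, and the associated haploid pair-coalescence probability is $c_N' = \EE[(V_1)_2]/(2(2N-1)) \sim 2 c_N$ by Lemma~\ref{lem:cN}. Writing $M := 2N$ and $K := k_1+\cdots+k_j$, the elementary identity $N^{K-j}2^{K} = 2^{j} M^{K-j}$ yields
\[
\frac{1}{c_N}\,\frac{\EE\big[(V_1)_{k_1}\cdots(V_j)_{k_j}\big]}{N^{K-j}\,2^{K}}
= \big(1+o(1)\big)\,\frac{2^{1-j}}{c_N'}\,\frac{\EE\big[(V_1)_{k_1}\cdots(V_j)_{k_j}\big]}{M^{K-j}},
\]
so that \eqref{eq:Vfmcond} is, up to the explicit positive constant $2^{1-j}$ and the asymptotic relation $c_N'\sim 2c_N$, precisely M\"ohle and Sagitov's joint factorial-moment condition for the size-$M$ model. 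Likewise $\Phi_N$ in \eqref{eq:PhiNconv} is the law of the ranked frequencies $V_{(i)}/M$, the measure $\Xi_{r,N}$ in Condition~III is the joint law of $(V_1/M,\dots,V_r/M)$, and the normalisations in Conditions~II and~III and in \eqref{eq:PhiNconv} differ from the size-$N$ conventions of \cite{Mohle2001, Sagitov2003} only by explicit powers of $2$, absorbed into the limit objects $F_j$, $\Xi_r$, $\Xi'$ exactly as recorded in Theorem~\ref{th:sibling} and in the statement of Condition~III.

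Granting this dictionary, the chain of equivalences among \eqref{eq:PhiNconv}, \eqref{eq:Vfmcond}, Condition~II and Condition~III, together with the identification of $\Xi'$, $(F_j)_{j\in\NN}$ and $(\Xi_r)_{r\in\NN}$, is the content of \cite[Lemma~3.1 and Theorem~2.1]{Mohle2001}, \cite[Theorem~1.1, Corollary~2.1 and Theorem~2.1]{Sagitov2003} and \cite[Propositions~8 and~11 and Theorem~2]{Schweinsberg2000}, applied verbatim to the size-$M$ model. The only step I would carry out by hand is the equivalence of \eqref{eq:Vfmcond} with Condition~I: expanding $(v-2)^{k}$ in the falling-factorial basis $\{(v)_m : 0\le m\le k\}$, and conversely, writes each of the two normalised expectations as a finite, $N$-independent linear combination of the other; the leading term is the ``diagonal'' one with matching exponents, and every remaining term, after normalisation by $c_N N^{K-j}2^K$, either converges by \eqref{eq:Vfmcond} (when all of its exponents are still $\ge 2$, being one of the quantities $\phi_{j'}$) or is $O(1/N)$ and hence vanishes (when it has an exponent $\le 1$ or a strictly reduced total degree), using the uniform-in-$N$ bound on normalised joint factorial moments, cf.\ \cite[Lemma~3.1]{Mohle2001} and the Proposition following Lemma~\ref{1404148}. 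Hence both families of limits exist simultaneously and agree, which is exactly Condition~I.

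The main obstacle is not conceptual but notational: one must carefully match the vague convergence on $\Delta\setminus\{\mathbf{0}\}$ in \eqref{eq:PhiNconv} to the weak-convergence statements of \cite{Sagitov2003}, paying particular attention to the possible escape of mass towards $\mathbf{0}$ — i.e.\ the Kingman component $\Xi'(\{\mathbf{0}\})$, which in the moment picture is pinned down through $\phi_1(2)=2\Xi'(\{\mathbf{0}\})+2\,\Xi'(\Delta\setminus\{\mathbf{0}\})=2$, cf.\ \eqref{eq:Vfmrel} — one must relate the symmetric ``unranked'' measures $\Xi_{r,N}$ of Condition~III to the ranked law $\Phi_N$ via the standard symmetrisation/projection correspondence, and one must keep track throughout of the single extra power of $2$ distinguishing the size-$2N$ model from the size-$N$ conventions of \cite{Mohle2001, Sagitov2003, Schweinsberg2000}. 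All of this is routine — it is the same bookkeeping already carried out there — but it is where the care must go.
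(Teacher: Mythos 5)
Your proposal is correct and follows essentially the same route as the paper, whose proof simply defers the equivalences \eqref{eq:Vfmcond}$\Leftrightarrow$I, \eqref{eq:Vfmcond}$\Leftrightarrow$II to p.~1557 of \cite{Mohle2001} and II$\Leftrightarrow$III, \eqref{eq:PhiNconv}$\Leftrightarrow$III to pp.~849--852 of \cite{Sagitov2003}. You merely make explicit the size-$2N$ dictionary and the falling-factorial expansion for Condition~I that the paper leaves implicit.
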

\begin{proof}
(\ref{eq:Vfmcond}) $\Leftrightarrow$ Condition I and (\ref{eq:Vfmcond}) $\Leftrightarrow$ Condition II can be proved similarly as Page 1557 of \cite{Mohle2001}. Condition II $\Leftrightarrow$ Condition III and (\ref{eq:PhiNconv}) $\Leftrightarrow$ Condition III can be proved following Pages 849-852 of \cite{Sagitov2003}. Here we omit the details.
\end{proof}


\end{appendix}


\end{document}